\documentclass[11pt]{article}
    \title{On the Curvature of Regge Metrics}
    \author{Evan S. Gawlik\thanks{Department of Mathematics and Computer Science, Santa Clara University, ORCID 0000-0002-3244-3426}, Jack McKee\thanks{Department of Mathematics, University of Hawai`i at M$\overline{\mbox{a}}$noa, ORCID 0009-0004-1865-9089, jmckee@math.hawaii.edu}    }
    \usepackage[bindingoffset=0.2in,
            left=1in,
            right=1.5in,
            top=1in,
            bottom=1in,
            footskip=.25in]{geometry}
    \usepackage{amsmath}
    \usepackage{amsfonts}
    \usepackage{amsthm}
    \usepackage{amssymb}
    \usepackage{xcolor}
    \usepackage{tikz}
    \usetikzlibrary{cd}
    \usepackage[normalem]{ulem}
    \newtheorem{theorem}{Theorem}
    \newtheorem{definition}[theorem]{Definition}
    \newtheorem{corollary}[theorem]{Corollary}
    
        \newtheorem{proposition}[theorem]{Proposition}

    \newtheorem{lemma}[theorem]{Lemma}
    \newtheorem{remark}[theorem]{Remark}
    \newcommand{\pdiff}[2]{\frac{\partial #1}{\partial #2}}
    \newcommand{\tdiff}[2]{\frac{d #1}{d #2}}
    \newcommand{\iprod}[1]{\left\langle #1 \right\rangle}
    \newcommand{\Two}{\mathrm{I\!I}}

\usepackage{graphicx}
\usepackage[normalem]{ulem}

\usepackage[style=numeric-comp,giveninits=true,url=false,doi=false,isbn=false,date=year,maxbibnames=100,backend=bibtex]{biblatex}
\bibliography{connection_form.bib}

\usepackage{hyperref}

\begin{document}

\maketitle 

\begin{abstract}
We use moving frame techniques to derive a notion of curvature for a class of piecewise-smooth Riemannian metrics called Regge metrics, showing that it is a measure that simultaneously satisfies the (weak) Cartan structure equations and the appropriate gauge transformation law. It turns out that this distributional curvature is equivalent to existing notions of densitized distributional curvature. We also investigate more closely the two-dimensional case, where we prove the Gauss-Bonnet theorem for Regge metrics.
\end{abstract}

\section{Introduction}

Since Tullio Regge's introduction of what are today called Regge metrics---discontinuous metrics with tangential-tangential continuity---they have found widespread use in numerical models of general relativity~\cite{Regge,Li}, continuum mechanics~\cite{neunteufel2021avoiding,Li,neunteufel2024hellan}, and more. In many of these applications, one of the key features of a Regge metric is that curvature measures can be defined which converge in measure to their smooth counterparts. Regge's original paper \cite{Regge} discussed piecewise-constant metrics on simplicial meshes, where the scalar curvature is given simply by the angle defect around codimension-2 interfaces. Further investigations by Cheeger, M{\"u}ller, and Schrader \cite{cheeger1984curvature} proved that a broader class of curvatures, called the Lipschitz-Killing curvatures, converges in measure to their smooth counterparts in the piecewise-flat setting.

Later developments involved proving convergence results for Gauss, scalar, and Einstein curvature measures on higher-order Regge metrics, where the metric is piecewise-smooth rather than only piecewise-flat but retains the jump conditions on simplex boundaries \cite{gawlik2020high,BKGa22,GaNe22,gawlik2023finite}. The angle defect remains part of all of these curvature measures, but extra terms involving the curvature on element interiors and the jump in mean curvature/second fundamental form over element boundaries also appear. Only recently have convergence results been proved for a distributional version of the full Riemann curvature tensor in arbitrary dimension \cite{gopalakrishnan2023analysis}.

In this paper we will focus not on the numerical convergence of distributional curvature, but rather on deriving the ``correct" definition of distributional curvature from first principles. The formulas (\ref{distcurvfinal}) and (\ref{distcurvendofinal}) are equivalent to that for the densitized distributional Riemann curvature described in \cite{gopalakrishnan2023analysis}.  We are not the first to pursue a derivation of the correct definition from first principles, especially for the scalar curvature in the piecewise-flat setting; see \cite{christiansen2024definition,christiansen2011linearization,bernig2002scalar,bernig2006curvature} and the references therein for other perspectives on this.

Our starting point is the structure equations on the orthonormal frame bundle, about which some background is provided in the following section. We take them to be the ``ground truth'' that define the distributional curvature. The question is, then, which moving frames $f$ can be used to obtain a distributional curvature functional $f^*\Omega_{\mathrm{dist}}$ that is actually a measure (or more precisely an order-0 current), and that has the correct gauge transformation law?  We argue that the vectors comprising such a frame $f$ need to possess single-valued normal and tangential components on codimension-1 interfaces, forcing them to be discontinuous at (generic) codimension-2 interfaces.  A frame $f$ with this property will be called ``compatible'' if it satisfies a few regularity hypotheses and topological constraints detailed in Definition~\ref{compatibilitydef}. We argue that compatible frames are the correct generalization of smooth orthonormal frames. Section \ref{compatframesection} is a discussion motivating the definition of compatible frames. Sections \ref{integrationbypartssection}-\ref{angledefectsection} then use this definition to derive an expression for the distributional curvature functional $f^*\Omega_{\mathrm{dist}}$.

Importantly, a core part of the definition of a compatible frame involves \emph{blow-ups} of polytopes, essentially to ensure that the frame is regular enough to permit integration by parts, even though it is discontinuous. The idea of using blow-ups to define geometric invariants of Regge metrics is not new, appearing in \cite[p.~2]{BERCHENKOKOGAN2025100529} for much the same reason why we use it here. In Section \ref{distcurvsection}, after deriving properties that such a frame must have, we use them to obtain an expression for $f^*\Omega_{\mathrm{dist}}$ that behaves as it should. We use blow-ups to help compute $f^*\Omega_{\mathrm{dist}}$ in Sections \ref{integrationbypartssection}-\ref{angledefectsection}, and we also use them in Section~\ref{deframesection} to derive a frame-independent expression for the distributional curvature, which is more practical for real computations, especially on manifolds that are not necessarily parallelizable. 

Our main results are stated in Theorems~\ref{distcurvthrm} and~\ref{Rdistthrm}.  Roughly speaking, they say that if an $n$-dimensional manifold $M$ is equipped with a (curvilinear) polyhedral mesh, a Regge metric $g$, and a compatible frame $f$, then the structure equations for the connection one-form and curvature two-form can be given meaning in a distributional sense.  Furthermore, the distributional curvature two-form $f^*\Omega_{\mathrm{dist}}$, when reinterpreted as an $\mathrm{End}(TM)$-valued two-form, is a functional $\hat{R}_{\mathrm{dist}}$ that acts on any $\mathrm{End}(TM)$-valued $(n-2)$-form $\hat{\phi}$ with sufficient regularity via
\[
\iprod{\iprod{\hat{R}_{\mathrm{dist}},\hat{\phi}}} = \sum_{T \subseteq M} \int_{\mathring{T}} \iprod{\hat{R} \wedge \hat{\phi}} - \sum_{\mathring{e} \subset \mathring{M}} \int_{\mathring{e}} \left[\!\left[\iprod{\hat{\Two}_e \wedge i_{\mathring{e}}^*\hat{\phi}} \right]\!\right] + \sum_{\mathring{p} \subset \mathring{M}}\int_{\mathring{p}}\iprod{\hat{\Theta}_p \wedge i_{\mathring{p}}^*\hat{\phi}}.
\]
Here, the sums over $T$, $e$, and $p$ are sums over polytopes of codimension 0, 1, and 2, respectively, and the maps $i_{\mathring{e}}^*$ and $i_{\mathring{p}}^*$ are pullbacks under the inclusions $\mathring{e} \hookrightarrow M$ and $\mathring{p} \hookrightarrow M$.
The notation $[\![\cdot]\!]$ specifies the jump of a multi-valued quantity over the submanifold $e = T \cap T'$, which in this case is simply the difference $\iprod{\hat\Two_e^T \wedge i_{\mathring{e}}^*\hat\phi|_T} - \iprod{\hat\Two_e^{T'} \wedge i_{\mathring{e}}^*\hat\phi|_{T'}}$.  The quantities $\hat{R}$, $\hat{\Two}_e^T$, and $\hat\Theta_p$ are $\mathrm{End}(TM)$-valued $2$-forms, $1$-forms, and $0$-forms that encode the curvature tensor, second fundamental form, and angle defect, respectively. The operation $\iprod{\cdot \wedge \cdot}$ takes a pair of $\mathrm{End}(TM)$-valued forms and wedges their form parts and applies a nondegenerate pairing to their endomorphism parts.

Constructing compatible frames is nontrivial, and we provide an existence proof in Section \ref{constructingcompatibleframessection}. Lastly, in Section \ref{gengaussbonnetsection} we investigate the specialization to two dimensions, where the Gauss curvature measure can be defined independently of the frame, and we prove that there is a suitable generalization of the Gauss-Bonnet theorem.

Our results are stated for extremely general oriented parallelizable manifolds and meshes. This is partly because the presentation is actually not much more complicated for a general polyhedral mesh, since many delicate analytical conditions for Stokes' theorem must be dealt with even in the simplest cases, and existing literature treating this subject has already been developed to a very high level of generality.  One caveat is that we rely on the existence of blow-ups of polytopes which are also polytopes, but we have only been able to locate an existence theorem for blow-ups of simplices in the literature. An explanation of the types of manifolds we use in this paper, and references to relevant literature, can be found in the appendix. 

Some opportunities for extensions of our results are immediately apparent. The method of moving frames generalizes quite simply to indefinite (also called pseudo-Riemannian) metrics and to more general geometries as well. Lemma \ref{frameextthrm} in Section \ref{constructingcompatibleframessection} is already stated for arbitrary pseudo-Riemannian metrics. While the conditions we set for compatible frames make key use of the particular properties of the structure group $O(n)$, it is clear where the dependence lies and what would constitute removing it. Essentially the difficulty will lie in the jump conditions at codimension-2 polytopes, and correspondingly in the angle defect terms of the distributional curvature equation. These must be phrased in terms of another one-parameter group adapted to the geometry. We use the integration theory of differential forms whenever possible, avoiding metric dependence. 

\subsection{Background: Geometry in a Moving Frame}\label{backgroundsection}
In the method of moving frames~\cite{Cartan-For-Beginners}, one considers the geometry of a manifold by finding general constructions in the frame bundle, and then choosing an adapted frame which is most convenient for calculations.

Consider an oriented, parallelizable polyhedral $n$-manifold $M$ furnished with a smooth Riemannian metric $g$. The \emph{frame bundle} of $M$, denoted $\mathcal{F}_{GL}(M)$, is the sub-bundle of $TM \times \dots \times TM = (TM)^n$ such that the fiber over each point $x \in M$ is the set of ordered bases $F = (F_1,\dots,F_n)$ for $T_xM$. The \emph{orthonormal frame bundle} of $M$, denoted $\mathcal{F}_O(M)$, is the sub-bundle of $\mathcal{F}_{GL}(M)$ such that the fiber over each point is the set of ordered bases such that $\iprod{F_i,F_j} = \delta_{ij}$ for all $i,j$. A \emph{frame} is a section of the frame bundle, and an \emph{orthonormal frame} is a section of the orthonormal frame bundle.

There is a right action of $GL(n)$ on each fiber of $\mathcal{F}_{GL}(M)$ defined by $(F \cdot h)_i =  F_jh^j_i$. (To be consistent with Einstein summation notation, the entry of a matrix at the $j$th row and $i$th column will be written $h^j_i$). Clearly if $(x,F) \in \mathcal{F}_O(M)$ and $h \in O(n)$, then $(x,F \cdot h) \in \mathcal{F}_O(M)$ as well. On Riemannian manifolds, orthonormal frames are often more convenient to work with than coordinate frames.

The frame bundle has a canonical construction called the \emph{solder form}. This is a vector-valued one-form $\theta \in C^\infty\Omega^1(\mathcal{F}_{GL}(M);\mathbb{R}^n)$ defined implicitly by 
\[
d\pi\left(v|_{(x,F)}\right) = \theta^j\left(v|_{(x,F)}\right)F_j, 
\]
where $\pi : \mathcal{F}_{GL}(M) \to M$ is the bundle projection and $d\pi : T\mathcal{F}_{GL}(M) \to TM$ is its tangent map. When $f$ is a smooth frame, the one-forms $\{f^*\theta^j\}_{j = 1}^n$ form a basis of the cotangent space $T_x^*M$ such that $(f^*\theta^j)(f_k) = \delta^j_k$.

A vector $v|_{(x,F)} \in T_{(x,F)}\mathcal{F}_{GL}(M)$ will be called \emph{vertical} if $\theta(v) = 0$. Since the group action of $GL(n)$ on the fibers of $\mathcal{F}_{GL}(M)$ is free, all vertical vectors are derivatives at time $t = 0$ of curves $t \mapsto  (x,F \cdot h_v(t))$, where $h_v: (-1,1) \to GL(n)$ is a smooth curve with $h_v(0) = I$. This means we can define a linear map $\eta: \ker\theta \subset T_{(x,F)}\mathcal{F}_{GL}(M) \to \mathfrak{gl}(n)$ by $v \mapsto \dot{h_v}(0)$. This is a linear isomorphism at each point, with inverse $\eta^{-1}(\dot{h}(0)) = \pdiff{}{t}|_{t = 0}(x,F \cdot h(t))$. When restricted to vertical vectors in $T_{(x,F)}\mathcal{F}_O(M)$, $\eta$ becomes $\mathfrak{so}(n)$-valued.

It can be shown that $\eta$ defined this way is smooth by taking a coordinate neighborhood $U \subset M$, which defines a smooth section $s: U \to \mathcal{F}_{GL}(U)$ by $s_i := \pdiff{}{x_i}$. Every point $(x,F) \in \mathcal{F}_{GL}(U)$ is then equal to $(x,s \cdot h)$ for some matrix $h \in GL(n)$, which means $\mathcal{F}_{GL}(U)$ has smooth coordinates $(x^i,h^j_k)$. When expressed in these coordinates, $\eta$ is equal to $h^{-1}dh$. Thus $\eta$ is called the (left-invariant) \emph{Maurer-Cartan form}, ubiquitous in the theory of the geometry of Lie groups and symmetric spaces \cite{Sharpe}.

One of the classical theorems in Riemannian geometry is the existence and uniqueness of a metric-compatible, torsion-free connection called the Levi-Civita connection. Similar reasoning \cite{Gardner} can be used to derive the existence and uniqueness of an $\mathfrak{so}(n)$-valued form $\omega \in \Omega^1(\mathcal{F}_O(M);\mathfrak{so}(n))$ such that, in $\mathcal{F}_O(M)$,
\begin{equation}\label{structureeqn1}d\theta^i = -\omega^i_j \wedge \theta^j.\end{equation}

This form $\omega$ encodes the Levi-Civita connection. In fact, if $\nabla$ is the usual Levi-Civita connection and $f: M \to \mathcal{F}_O(M)$ is a smooth orthonormal frame, then $f^*\omega^i_j(v) = \iprod{\nabla_vf_j,f_i}$. Furthermore, the curvature of this connection,
\begin{equation}\label{structureeqn2}
\Omega := d\omega + \frac{1}{2}[\omega,\omega],
\end{equation}
written in coordinates as $\Omega^i_j = d\omega^i_j +\omega^i_k \wedge \omega^k_j$, is equivalent to the Riemann curvature tensor in the sense that $f^*\Omega^i_j(X,Y) = \iprod{R_{X,Y}f_j,f_i}$, where $R_{X,Y}=\nabla_X\nabla_Y-\nabla_Y\nabla_X-\nabla_{[X,Y]}$.  More geometric identities, such as the Bianchi identities, can be derived from these \emph{structure equations}, but these two are sufficient for the purposes of this paper.

For convenience we will introduce a basis of $\mathfrak{so}(n)$ that is used throughout this paper. For $i \ne j$, let $w^i_j \in \mathfrak{so}(n)$ be defined as the matrix such that the entry at the $i$th row and $j$th column is equal to $-1$, while the entry at the $j$th row and $i$th column is equal to $1$, and all other entries are zero. Every element $A \in \mathfrak{so}(n)$ can therefore be written as $\sum_{i < j} A^j_i w^i_j$.

The last piece of background from Lie group theory that is needed is the \emph{adjoint action}. It comes from the conjugation action of $GL(n)$ on itself, which we will denote $\mathbf{Ad}(h)(k) := hkh^{-1}$. The adjoint action $\mathrm{Ad}: GL(n) \to \mathrm{Aut}(\mathfrak{gl}(n))$ is defined by $\mathrm{Ad}(h)(A) = hAh^{-1}$. Abstractly, $\mathrm{Ad}(h)(A)$ is the derivative of $\mathbf{Ad}(h)(k(t))$ at $t = 0$, with $k$ being any curve such that $k(0) = I, \dot{k}(0) = A$.

A $\mathfrak{gl}(n)$-valued form $\alpha$ on $\mathcal{F}_{GL}(M)$ is said to be \emph{tensorial} (or \emph{semi-basic}) if $v \lrcorner \alpha = 0$ for any vertical vector field $v$ (so $\alpha$ evaluates to zero on any multivector that has a vertical component) and $\alpha|_{(x,F \cdot h)} = \mathrm{Ad}(h^{-1})(\alpha|_{(x,F)})$ for any $h \in O(n)$. This expresses the idea that $\alpha$, in some sense, does not depend on the frame; if $\alpha$ is tensorial, then an endomorphism-valued form $\hat\alpha \in \Omega^k(M;\mathrm{End}(TM))$ could be defined so that $F_i \alpha^i_j|_{(x,F)} = \hat\alpha|_x(F_j)$.

The 2-form $\Omega$ defined above is tensorial, but the 1-form $\omega$ is not. In fact, if we were to calculate out $\omega(v)$ where $v$ is a vertical vector, then we get the same as $\eta(v)$. This can also be expressed by the gauge transformation law: if $f: M \to \mathcal{F}_O(M)$ is an orthonormal frame and $h: M \to O(n)$ is a change of basis, then 
\begin{equation} \label{gauge}
(f \cdot h)^*\omega = h^{-1}dh + \mathrm{Ad}(h^{-1})(f^*\omega).
\end{equation}
In other words, the Levi-Civita connection is not a tensor.

\begin{remark}\label{parallelizableremark}It must be noted that, in this section, we have treated $M$ as if there can exist a global frame $f: M \to \mathcal{F}_{GL}(M)$. In many cases, this is not possible. Manifolds that do support a global frame are called \emph{parallelizable}. However, every point in a manifold is contained in a neighborhood which is parallelizable. Since curvature is a local property, this means the parallelizable case is really the most interesting. Parallelizability is also different from being topologically trivial; for instance, spheres of dimension $0$, $1$, $3$, and $7$ and products thereof are all parallelizable.\end{remark}

\subsection{Meshes and Regge Metrics}\label{meshessection}

In classical differential geometry, the metrics and frames in question are always smooth. However, this paper is concerned with Regge metrics, which are only piecewise smooth with respect to a mesh. In this section we will precisely define much of the terminology of meshes and Regge metrics that is used throughout the rest of the paper.

$M$ will be assumed to be a smooth, oriented, polyhedral $n$-manifold (see the appendix for precise definitions). We will also assume that $M$ is parallelizable, with the idea in mind that more topologically complicated manifolds can be obtained by gluing together finitely many parallelizable ones; for instance a sphere can be obtained by gluing together two disks. $M$ is equipped with a countable \emph{mesh} $\{\Delta\}$ whose union is $M$, where each set $\Delta \subset M$ is the image of a closed $(n-d)$-dimensional convex polytope $\hat\Delta \subset \mathbb{R}^{n-d}$ under a smooth embedding for some $d \le n$. If $d = 0$, then the embedding must in addition be positively oriented.  We will abuse terminology and call each $\Delta$ a polytope, even though it is technically the image of a polytope under a smooth embedding.

The different types of polytope are distinguished by their codimension $d$ in $M$; polytopes of codimension 0 will be labeled $T$, polytopes of codimension 1 will be labeled $e$, and polytopes of codimension 2 will be labeled $p$. Polytopes of arbitrary codimension will simply be labeled $\Delta_d$, where $d$ is the codimension, so a polytope labeled $\Delta_n$ is a single point, a polytope labeled $\Delta_{n-1}$ is a line segment, and so on. The relative interior of a polytope $\Delta_d$, meaning the set of all points $x \in \Delta_d$ such that there exists an open set $U$ containing $x$ and $U \cap \Delta_d \cong \mathbb{R}^{n-d}$, will be denoted $\mathring{\Delta}_d$.

As a technical assumption we will need to assume that each top-dimensional polytope $T$ has a \emph{blow-up} which is a closed convex polytope in $\mathbb{R}^n$. This is at least true for simplices, and we believe it is probably true for general convex polytopes. See the appendix for more information on blow-ups and relevant citations.

The mesh must satisfy some axioms. Each face of a polytope in the mesh is also a polytope of the mesh. The intersection of two polytopes must be either a shared face of both polytopes or empty. Additionally, the mesh must respect the stratification structure of $M$. Being a polyhedral manifold, the boundary $\partial M$ can be decomposed into strata $S_d(M)$, which consists of those points $x \in \partial M$ which are contained in a submanifold of $\partial M$ that is of codimension $d$, but not one that is of codimension $d-1$. We will require that if the relative interior of a polytope $\Delta_d$ intersects $S_{d'}(M)$, then $\mathring{\Delta}_d \subseteq S_{d'}(M)$. This prevents pathological tangencies at the boundary, and it means that the closure of each stratum of $M$ inherits its own mesh decomposition.

A \emph{Regge metric} $g$ for the manifold $M$ and mesh $\{\Delta\}$ consists of a $C^2$ Riemannian metric $g^{\Delta}$ for each polytope $\Delta$, with the property that if $\Delta'$ is a face of $\Delta$ then $i_{\Delta'}^*g^{\Delta} = g^{\Delta'}$, where $i_{\Delta'} : \Delta' \hookrightarrow \Delta$ is the inclusion map. In other words, the tangential-tangential components of $g$ are continuous across any shared face. Ideally, one would like to remove the positive-definiteness restriction and work with arbitrary pseudo-Riemannian geometries.  We have attempted to avoid using the positive-definiteness whenever possible, but nonetheless this is still a very necessary assumption to make. In subsequent sections, $M$, $\{\Delta\}$, and $g$ are implicit.

We make frequent use of a ``wedge inner product'' between forms that take values in $\mathfrak{gl}(n)$. We can define a nondegenerate symmetric bilinear form $\iprod{\cdot,\cdot}$ on the vector space $\mathfrak{gl}(n) = \mathbb{R}^{n \times n}$ by setting $\iprod{A,B} := \mathrm{Tr}(AB)$. When restricted to $\mathfrak{so}(n)$, it is a negative-definite inner product. This means we can define a product 
\[
\iprod{\cdot\wedge\cdot}: (\mathfrak{gl}(n) \otimes \Lambda^k_x(M)) \otimes (\mathfrak{gl}(n) \otimes \Lambda^{n-k}_x(M)) \to \Lambda^n_x(M)
\]
by setting 
\[
\iprod{(A \otimes \alpha) \wedge (B \otimes \beta)} := \iprod{A,B}\alpha \wedge \beta
\]
and extending multilinearly. This is not a true inner product, but it is nondegenerate and bilinear, and it is completely independent of any metric structure, which makes it desirable for our use case. It also has some useful symmetries. One we will use often is that $\iprod{(\mathrm{Ad}(h)(A) \otimes \alpha) \wedge (B \otimes \beta)} = \iprod{(A \otimes \alpha) \wedge (\mathrm{Ad}(h^{-1})(B) \otimes \beta)}$ for any $h \in GL(n)$. When applying the adjoint action (or any other map $\mathfrak{gl}(n) \to \mathfrak{gl}(n)$) to a Lie algebra part of a Lie algebra valued form, we will abuse notation slightly by applying it to the whole form.

\section{Derivation of the Distributional Riemann Curvature}\label{distcurvsection}

The distributional Riemann curvature tensor associated to an orthonormal frame $f$, which we will denote by $f^*\Omega_{\mathrm{dist}}$, is a linear functional which associates a number to each smooth, compactly supported $\mathfrak{so}(n)$-valued $(n-2)$-form $\phi$ which vanishes when pulled back to $\partial M$. The frame $f$ is made up of $C^2$ frames on the interior of each codimension-0 polytope $T$, so $f = \bigsqcup_{T \subseteq M} f^T$ and each $f^T: \mathring{T} \to \mathcal{F}_O(\mathring{T})$ is a $C^2$ section. We will also take as a definition that $f^*\omega$ and $f^*[\omega,\omega]$ are the piecewise-$C^2$ forms defined as $f^*\omega|_{\mathring{T}} := {f^T}^* \omega$ and $f^*[\omega,\omega]|_{\mathring{T}} := {f^T}^* [\omega,\omega]$, where $\omega \in C^1\Omega^1(\mathcal{F}_O(\mathring{T});\mathfrak{so}(n))$ is the usual connection one-form.  The distributional exterior derivative of $f^*\omega$ is the linear functional defined by

\begin{equation} \label{distd}
\iprod{\iprod{ df^*\omega, \phi }} := \sum_{T \subseteq M} \int_{\mathring{T}} \iprod{ {f^T}^*\omega \wedge d\phi }
\end{equation}
for all $\phi \in C^\infty_c\Omega^{n-2}(M;\mathfrak{so}(n))$ that vanish when pulled back to $\partial M$.

Per the discussion above, our definition of $f^*\Omega_\mathrm{dist}$ is
\begin{equation}\label{distcurvdef}\iprod{\iprod{f^*\Omega_\mathrm{dist},\phi}} := \iprod{\iprod{df^*\omega + \frac{1}{2}f^*[\omega,\omega],\phi}} = \sum_{T \subseteq M} \int_{\mathring{T}} \iprod{{f^T}^*\omega \wedge d\phi} + \frac{1}{2}\iprod{{f^T}^*[\omega,\omega] \wedge \phi}.\end{equation} 

What we aim to do is find conditions on the frame $f$ such that the right-hand side of (\ref{distcurvdef}) can be efficiently computed, is bounded by a multiple of the supremum norm of $\phi$, and transforms like a tensor.

\subsection{Conditions on Compatible Frames} \label{compatframesection}
An orthonormal frame $f = \bigsqcup_{T \subseteq M} f^T$ will be called \emph{compatible} if it has some desirable properties that make $\iprod{\iprod{f^*\Omega_\mathrm{dist},\phi}}$ both correct from a geometrical standpoint and practical from a computational standpoint. The conditions that a compatible frame must satisfy are fairly technical and may seem arbitrary, so before stating them, we will first provide some motivation.

First, we want the individual vector fields in our frame to be ``parallel'' across codimension-1 polytopes, in some sense. For each interior codimension-1 polytope $\mathring{e} \subset \mathring{M}$, we can define a frame $E_e$ which is orthonormal in the metric $i_e^*g$; this metric is well-defined since $g$ has single-valued tangential-tangential components on $e$. If $e = T \cap T'$, then $E_e$ can be extended to two orthonormal frames $E_e^T$ and $E_e^{T'}$ by appending outward-facing normal vectors to $e$ for $g^T$ and $g^{T'}$ respectively, which we denote by $\vec{n}$ and $\vec{n}'$. Assume that for each $T$, $f^T$ can be continuously extended to $\mathring{e}$ by the $C^1$ section $f^T|_{\mathring{e}}: \mathring{e} \to \mathcal{F}_O(T)|_{\mathring{e}}$, and let $\mu_e^T: \mathring{e} \to O(n)$ be a map such that 
\[
E_e^T \cdot \mu_e^T = f^T|_{\mathring{e}}
\]
and likewise $E_e^{T'} \cdot \mu_e^{T'} = f^{T'}|_{\mathring{e}}$. The condition on codimension-1 faces is that 
\[
\mu_e^T = \begin{bmatrix}I & 0 \\ 0 & -1\end{bmatrix}\mu_e^{T'}.
\]
That is, the tangential components of each $f_i$ are continuous and the normal component of $f_i$ is continuous if one of the normal vectors is negated, so $\iprod{f_i,\vec{n}}_T = \iprod{f_i',-\vec{n}'}_{T'}$.

While somewhat arbitrary, this notion of ``parallelism'' is supported by the fact that a piecewise-smooth geodesic, defined as a locally energy-minimizing curve, must have a velocity vector that satisfies the same condition we have placed on the $f_i$'s.

Another, possibly deeper reason for this to be true, is that we need some kind of frame-independent coupling across codimension-1 polytopes for the distributional curvature to be tensorial. Suppose a piecewise-smooth frame $f$ is compatible and can be continuously extended to each codimension-1 boundary component $\mathring{e} \subset \partial T$ for each $T \subseteq M$. Then if $\phi$ has support in a small neighborhood of a point $x_0 \in \mathring{e}$, where $e = T_1 \cap T_2$ is an interface between two polytopes, Stokes' theorem gives us
\begin{align*}
\iprod{\iprod{f^*\Omega_\mathrm{dist},\phi}} 
&= \iprod{\iprod{df^*\omega + \frac{1}{2}f^*[\omega,\omega],\phi}} \\
&= \sum_{i = 1,2}\int_{\mathring{T_i}} \left(\iprod{{f^{T_i}}^*\omega \wedge d\phi} + \frac{1}{2}\iprod{{f^{T_i}}^*[\omega,\omega] \wedge \phi}\right)\\
&= \sum_{i = 1,2} \int_{\mathring{T_i}} \iprod{{f^{T_i}}^*(d\omega + \frac{1}{2}[\omega,\omega]) \wedge \phi} - \int_{\mathring{e}} \iprod{[\![f^*\omega]\!]\wedge \phi} \\
&= \sum_{i = 1,2} \int_{\mathring{T_i}} \iprod{{f^{T_i}}^*\Omega \wedge \phi} - \int_{\mathring{e}} \iprod{[\![f^*\omega]\!]\wedge \phi},
\end{align*}
where $[\![f^*\omega]\!]$ denotes the jump in $f^*\omega$ across $e$. This expression does not depend on any derivatives of $\phi$, so the domain of $f^*\Omega_{\mathrm{dist}}$ can be formally extended to include piecewise-smooth forms (with the same support) by setting 
\[
\iprod{\iprod{f^*\Omega_{\mathrm{dist}}, \phi}} = \sum_{i = 1,2}\int_{\mathring{T_i}} \iprod{{f^{T_i}}^*\Omega \wedge \phi } - \int_{\mathring{e}} [\![\iprod{f^*\omega \wedge \phi}]\!].
\]
However, keeping $\phi$ continuous, we could then apply differing transformations $h^{T_i}: T_i \to O(n)$ on either side of $e$, which we will collectively call $h$, and obtain another piecewise-smooth frame that we can evaluate the distributional curvature in. Since $\Omega$ is tensorial and $\omega$ obeys the gauge transformation law~\eqref{gauge}, we would get
\begin{align*}
&\iprod{\iprod{(f \cdot h)^*\Omega_\mathrm{dist},\phi}} \\
&= \sum_{i = 1,2} \int_{\mathring{T_i}} \iprod{\mathrm{Ad}({h^{T_i}}^{-1})({f^{T_i}}^*\Omega) \wedge\phi} - \int_{\mathring{e}} \iprod{[\![\mathrm{Ad}(h^{-1})(f^*\omega) + h^{-1}dh]\!] \wedge \phi}\\
&= \sum_{i = 1,2} \int_{\mathring{T_i}}\iprod{{f^{T_i}}^*\Omega \wedge\mathrm{Ad}({h^{T_i}})(\phi)} - \int_{\mathring{e}}[\![\iprod{f^*\omega \wedge \mathrm{Ad}(h)(\phi)}]\!] + \iprod{[\![h^{-1}dh]\!] \wedge \phi}\\
&= \iprod{\iprod{f^*\Omega_{\mathrm{dist}},\mathrm{Ad}(h)(\phi)}} - \sum_{\mathring{e} \subset \mathring{M}} \int_{\mathring{e}} \iprod{[\![h^{-1}dh]\!] \wedge \phi}.
\end{align*}

If $f^*\Omega$ were a continuous $\mathfrak{so}(n)$-valued form, we would get\\ $\iprod{\mathrm{Ad}(h^{-1})(f^*\Omega) \wedge \phi} = \iprod{f^*\Omega \wedge \mathrm{Ad}(h)(\phi)}$. Guided by this, we define
\begin{equation}
\iprod{\iprod{\mathrm{Ad}(h^{-1})(f^*\Omega_{\mathrm{dist}}),\phi}} := \iprod{\iprod{f^*\Omega_{\mathrm{dist}},\mathrm{Ad}(h)(\phi)}}.
\end{equation}

As the Riemann curvature should be tensorial, it should always be true that changing the frame by $h$ results in an adjoint action by $h^{-1}$ on $f^*\Omega_{\mathrm{dist}}$. For this to be true for continuous $\phi$ and discontinuous $h$, it must be the case that $[\![h^{-1}dh]\!] = 0$ along every codimension-1 face $e = T \cap T'$, so on $e$, $h^T = C_eh^{T'}$ for some constant matrix $C_e$. We will restrict ourselves to the case $C_e = I$, since otherwise it would not be possible to modify the transformation $h$ so that it is the identity outside of a small neighborhood of $x_0$ but retains the jump condition, losing locality. What this means is that, if any particular frame $f$ is asserted to be compatible, then it is reasonable to assert that in a small enough neighborhood $U$ of a point $x \in \mathring{e}$, the set of compatible frames on $U$ (meaning restrictions of compatible frames to $U$) must be contained in $\{(f|_U \cdot h): h\; \text{is piecewise-smooth and continuous}\}$. Therefore, piecewise-smooth frames are usually not compatible, and the set of piecewise-smooth compatible frames must all have the same jump conditions along codimension-1 polytopes---the matrix $\mu_e^T(\mu_e^{T'})^{-1}$ cannot depend on $f$. The choice $\mu_e^T(\mu_e^{T'})^{-1} = \begin{bmatrix}I & 0 \\ 0 & -1\end{bmatrix}$ is the simplest, and it is consistent with the case of continuous metrics and frames.

Enforcing the constraint $\mu_e^T = \begin{bmatrix}I & 0 \\ 0 & -1\end{bmatrix}\mu_e^{T'}$ on a frame precludes the possibility of it being continuous on the boundary of each polytope. For each codimension-2 polytope $p \subset M$, let $E_p$ be a frame which is orthonormal with respect to $i_p^*g$, and for each pair of codimension-0 and codimension-1 polytopes $T,e$ such that $p \subset e \subset T$, let $E_{p,e}^T$ be the extension of $E_p$ to an orthonormal frame by appending the normal vector $\vec{\nu}$ which is orthogonal to $p$ and points into $e$ and the normal vector $\vec{n}$ which is orthogonal to $e$ and makes $E_{p,e}^T$ right-hand oriented.  If $p = e \cap e'$ and $e,e' \subset T$, then at each point $x \in p$,

\begin{equation} \label{edge2edge}
E_{p,e}^T = E_{p,e'}^T \cdot \begin{bmatrix}I & 0 & 0 \\
0 & \cos(\pm\theta_p^T) & -\sin(\pm\theta_p^T) \\
0 & \sin(\pm\theta_p^T) & \cos(\pm\theta_p^T)\end{bmatrix} = E_{p,e'}^T\cdot\exp\left(\pm\theta_p^Tw^{n-1}_n\right),
\end{equation}
where $\theta_p^T$ is the dihedral angle between $e$ and $e'$ at $x$.
Additionally, there exists an $O(n)$-valued matrix $A_{p,e}^T$ such that 
\[
E_e^T = E_{p,e}^T \cdot A_{p,e}^T.
\]
Note that since the final entries of $E_e^T$ and $E_{p,e}^T$ are associated to normal vectors to $e$ in the metric $g^T$, $A_{p,e}^T = \begin{bmatrix}A_{p,e} & 0 \\ 0 & \pm 1\end{bmatrix}$, where $A_{p,e}$ does not depend on the polytope $T$. Because $E_e^{T'}$ has a normal vector that points in a different direction to that of $E_e^T$, but $E_{p,e}^{T'}$ does not, we also have $A_{p,e}^T = \begin{bmatrix}I & 0 \\ 0 & -1\end{bmatrix}A_{p,e}^{T'} = A_{p,e}^{T'}\begin{bmatrix}I & 0 \\ 0 & -1\end{bmatrix}$, and therefore
\[
A_{p,e}^T \mu_e^T = A_{p,e}^{T'} \mu_e^{T'}.
\]

Now suppose that for any Regge metric $g$, there exists a frame $f$ that is continuous on each codimension-0 polytope $T$ and satisfies the compatibility condition $\mu_e^T = \begin{bmatrix}I & 0 \\ 0 & -1\end{bmatrix}\mu_e^{T'}$. We will produce a contradiction. Let $p$ be a codimension-2 polytope which is completely surrounded by codimension-0 polytopes $T_1,\dots,T_k$, and let $e_i = T_i \cap T_{i+1}$ for $i=1,2,\dots,k-1$ and $e_0=e_k = T_k \cap T_1$. Without loss of generality, also assume that the ordering is chosen so that for each $i=1,2,3,\dots,k$, the signs in~\eqref{edge2edge} are positive when $e=e_{i-1}$, $e'=e_i$, and $T=T_i$.

Then 
\[
f^{T_i} = E_{p,e_{i-1}}^{T_i} \cdot ( A_{p,e_{i-1}}^{T_i}\mu_{e_{i-1}}^{T_i})  = E_{p,e_i}^{T_i} \cdot \left( \exp\left(\theta_p^{T_i}w^{n-1}_n\right) A_{p,e_{i-1}}^{T_i}\mu_{e_{i-1}}^{T_i}\right),
\]
but also $f^{T_i} = E_{p,e_i}^{T_i} \cdot (A_{p,e_i}^{T_i}\mu_{e_i}^{T_i})$.  Since the group action is free and $A_{p,e_{i-1}}^{T_i}\mu_{e_{i-1}}^{T_i} = A_{p,e_{i-1}}^{T_{i-1}}\mu_{e_{i-1}}^{T_{i-1}}$, this implies 

\begin{equation}\label{continuousframejump} \exp\left(\theta_p^{T_i}w^{n-1}_n\right)A_{p,e_{i-1}}^{T_{i-1}}\mu_{e_{i-1}}^{T_{i-1}}  = A_{p,e_i}^{T_i}\mu_{e_i}^{T_i}.\end{equation}

Let $\Pi_{i-1}^i$ be the linear transformation sending $f^{T_{i-1}}$ to $f^{T_i}$, meaning $\Pi_{i-1}^i(f^{T_{i-1}}_j) = f^{T_i}_j$ for each $j$. Clearly, $\Pi_k^1 \circ \Pi_{k-1}^k \circ \dots \circ \Pi_1^2 = I$. Let us express the transformation $\Pi_{i-1}^i$ in the bases $E_{p,e_{i-1}}^{T_{i-1}}$ and $E_{p,e_{i}}^{T_{i}}$:
\[
\![\Pi_{i-1}^{i}]_{E_{p,e_{i-1}}^{T_{i-1}}}^{E_{p,e_{i}}^{T_{i}}} = A_{p,e_{i}}^{T_{i}}\mu_{e_{i}}^{T_{i}}(A_{p,e_{i-1}}^{T_{i-1}}\mu_{e_{i-1}}^{T_{i-1}})^{-1} = \exp\left(\theta_p^{T_i}w^{n-1}_n\right).
\]

So we should get that 
\[
I = [\Pi_k^1 \circ \dots \circ \Pi_1^2]_{E_{p,e_1}^{T_1}}^{E_{p,e_1}^{T_1}} = \exp\left(\left(\sum_{i = 1}^k \theta_p^{T_i}\right)w^{n-1}_n\right),
\] 
which implies $\sum_{i = 1}^k \theta_p^{T_i} = 2m\pi$ for some integer $m$. However, in general, this sum can take any positive value if the metric is discontinuous at $p$. Therefore the frame $f^T$ cannot always be continuous at $p$. This is the origin of the angle defect. In order to control the discontinuity of $f^T$ as much as possible, we will restrict it to only rotate at a constant speed, and only in the plane orthogonal to $p$.

\paragraph{Summary of objects introduced.}

The preceding paragraphs introduce some objects that are used throughout the rest of the paper. They are collected here for convenience.

\begin{tabular}{c|p{12cm}}
Notation & Definition \\
\hline
$E_p$ & An arbitrary orthonormal frame on the codimension-2 polytope $p$. Entries are called $\tau_1,\dots,\tau_{n-2}$.\\
$E_e$ & An arbitrary orthonormal frame on the codimension-1 polytope $e$. Entries are called $E_1,\dots,E_{n-1}$. \\
$\vec{n}^T$ & The outward-pointing $g^T$-normal vector to a face $e \subset T$. Usually called just $\vec{n}$ when $T$ is implicitly known.\\
$\vec{\nu}$ & The inward-pointing normal vector to a face $p \subset e$.\\
$E_{p,e}$ & The $g^e$-orthonormal frame defined on $p$ with entries $(\tau_1,\dots,\tau_{n-2},\vec{\nu})$\\
$A_{p,e}$ & The map $p \to O(n-1)$ such that $E_e = E_{p,e} \cdot A_{p,e}$\\
$E_e^T$ & The $g^T$-orthonormal frame defined on $e$ with entries $(E_1,\dots,E_{n-1},\vec{n}^T)$.\\
$E_{p,e}^T$ & The $g^T$-orthonormal frame defined on $p$ with entries $(\tau_1,\dots,\tau_{n-2},\vec{\nu},\pm\vec{n}^T)$, where the sign on $\vec{n}^T$ is chosen so that $E_{p,e}^T$ is positively oriented in $M$.\\
$A_{p,e}^T$ & The map $p \to O(n)$ such that $E_e^T = E_{p,e}^T \cdot A_{p,e}^T$.\\
$\mu_e^T$ & The map $\mathring{e} \to O(n)$ such that $f^T|_{\mathring{e}} = E_e^T \cdot \mu_e^T$.\\
$\theta_p^T$ & The map $p \to \mathbb{R}$ which measures the interior angle between the two faces $e,e' \subset T$ whose intersection is equal to $p$.\\
\end{tabular}\\

To ensure that we are still able to apply integration by parts, despite the fact that the frame is discontinuous at $p$, we will follow the strategy outlined in~\cite[p. 2]{BERCHENKOKOGAN2025100529} and require that $f^T$ has some smoothness and continuity when pulled back to the \emph{blow-up} $B_T$ of $T$.  The blow-up $B_T$ of $T$ is essentially a polytope that has one codimension-1 face for each codimension-$d$ face of $T$ with $d \ge 1$, with tangencies related to inclusion relations between the original faces. There is a corresponding \emph{blow-down} map $\Phi^T: B_T \to T$ which restricts to a diffeomorphism $\Phi^T|_{\mathring{B}_T}: \mathring{B}_T \to \mathring{T}$. The blow-up has an exceptional set which we will call $E_{B_T}$, consisting of the faces of $B_T$ of codimension $\ge 2$, and it can be safely ignored for the purpose of Riemann integration. Some information on blow-ups of manifolds can be found in the appendix.

With all this in mind, we arrive at the definition of a compatible frame:
\begin{definition}\label{compatibilitydef}
A frame $f = \bigsqcup_{T \subseteq M} f^T$, where $f^T: \mathring{T} \to \mathcal{F}_O(T)|_{\mathring{T}}$ is a $C^2$ orthonormal frame for the metric $g^T$, is \emph{compatible} if:
\begin{enumerate}
\item{For each $T \subseteq M$, there exists a blow-up $B_T$ (equipped with a blow-down map $\Phi^T : B_T \to T$) and a Lipschitz continuous map $F^T: B_T \to \mathcal{F}_O(T)$ such that $F^T|_{B_T \backslash E_{B_T}} \in C^2(B_T \backslash E_{B_T}; \mathcal{F}_O(T))$, $f^T \circ \Phi^T|_{\mathring{B}_T} = F^T|_{\mathring{B}_T}$, and $\pi \circ F^T = \Phi^T$. We could also say that the following diagram is commutative (wherever the maps are defined):
\begin{center}
\begin{tikzcd}[column sep=large, row sep = large]
    B_T \arrow[r,"F^T"] \arrow[rd, "\Phi^T"] & \mathcal{F}_O(T) \arrow[d,"\pi", shift left=1ex] \\
    & T \arrow[u,"f^T"] 
\end{tikzcd} 
\end{center}
This ensures that for each $e \subset T$, there is a $C^2$ section $f^T|_{\mathring{e}}: \mathring{e} \to \mathcal{F}_O(T)|_{\mathring{e}}$ which continuously extends $f^T$.}

\item{For each $e \subset T \subseteq M$, let $\mu_e^T: \mathring{e} \to O(n)$ be the matrix such that $E_e^T \cdot \mu_e^T = f^T|_{\mathring{e}}$, and suppose $e = T \cap T'$. Then $\mu_e^{T'} = \begin{bmatrix}I & 0 \\ 0 & -1\end{bmatrix}\mu_e^T$.}
\item{For each $p \subset T \subseteq M$, let $e \subset T$ be the unique face meeting $p$ such that the frame $E_{p,e}^T$ has an inward-pointing normal vector as its last entry. Then there exists an orientation-preserving embedding $\psi^T_p: [0,1] \times \mathring{p}\to \overline{{\Phi^T}^{-1}(\mathring{p})}$, where the orientation in $p$ is induced by the orientation on $e \subset T$, and the orientation on ${\Phi^T}^{-1}(\mathring{p})$ is induced from $B_T$, such that $\psi^T_p(\{0\} \times \mathring{p}) \subset \overline{{\Phi^T}^{-1}(\mathring{e})}$ and $\Phi^T \circ \psi^T_p(s,x) = x$. Additionally, there exists a continuous function $r_{p,e}^T: p \to \mathbb{R}$ such that
\[
F^T(\psi^T_p(s,x)) = E_{p,e}^T(x) \cdot \left(\exp\left(s~r_{p,e}^T(x)w^{n-1}_n\right)A_{p,e}^T(x)\mu_e^T(x)\right)
\]
for all $x \in \mathring{p}$ and all $s \in [0,1]$.  In other words, the multi-valuedness of $f^T$ at codimension-2 faces is controlled so that $f^T$ is continuous when expressed in cylindrical coordinates around $x \in \mathring{p}$ and rotates in the plane orthogonal to $p$ at a rate depending only on $x$. There is no restriction on behavior near faces of higher codimension. 
}
\item{There exists a smooth metric $g_0$ and a smooth $g_0$-orthonormal frame $f_0$ such that there is a continuous homotopy of Regge metrics $g(t)$ with $g(1) = g$ and $g(0) = g_0$, and a homotopy of compatible frames $f(t)$ such that $f(0) = f_0$, $f(1) = f$, and $f(t)$ is $g(t)$-orthonormal and satisfies conditions 1, 2, and 3. The map $(t,x) \mapsto F^T(t)(x)$ also must vary continuously as a map $[0,1] \times B_T \to \mathcal{F}_{GL}(T)$.}
\end{enumerate}

\end{definition}

As we mentioned above, condition 1 makes it possible to integrate by parts even though the frame may have discontinuities on polytopes of codimension 2 or greater.
The fact that $F^T$ is Lipschitz and $C^2$ on the set $B_T \backslash E_{B_T}$ is necessary for the identity $d{F^T}^*\omega = {F^T}^*d\omega$ to hold and for ${F^T}^*\omega$ to be bounded and continuously extendable to $B_T \backslash E_{B_T}$. These are all necessary conditions for Stokes' Theorem to hold for the form ${F^T}^*(\omega \wedge \pi^*\phi)$ on $B_T$. Conditions 2 and 3 are necessary for the frame $f$ to have some semblance of continuity and for the distributional curvature to transform like a tensor, as mentioned previously, and condition 4 states that compatible frames can be ``smoothed out'' without introducing singularities.

\paragraph{Intuition behind the blow-up.}

To illustrate how blow-ups can be used to facilitate integration by parts, consider the following example.  On the quarter-disk $Q = \{(x,y) \in \mathbb{R}^2 \mid x,y > 0, \, x^2+y^2 < 1\}$, let $\alpha = d\theta$, where $\theta = \arctan(y/x)$.  Since $d\alpha=0$, $\int_Q d\alpha=0$.  However, the integrals of $\alpha$ over the three one-dimensional faces of $Q$ do not sum to zero: $\int_{(0,1) \times \{0\}} \alpha = \int_{\{0\} \times (0,1)} \alpha = 0$ and $\int_{\partial Q \cap S^1} \alpha = \frac{\pi}{2}$.  This discrepancy can be attributed to the fact that $\alpha$ is discontinuous at the origin.  To fix the problem, one can instead carry out the integrals in the polar coordinate domain $(r,\theta) \in (0,1) \times (0,\frac{\pi}{2})$, whose boundary has four (as opposed to three) one-dimensional faces.  This time, the integrals of $\alpha$ over the four faces of the boundary sum to zero since the integrals over $\{0\} \times (0,\frac{\pi}{2})$ and $\{1\} \times (0,\frac{\pi}{2})$ cancel one another.

The polar coordinate domain $(0,1) \times (0,\frac{\pi}{2})$ plays a similar role to the blow-up $B_T$ appearing in condition 1, and the map $(r,\theta) \mapsto (r\cos\theta,r\sin\theta)$ that sends this domain to the quarter-disk $Q$ plays a similar role to the blow-down map $\Phi^T$ appearing in condition 1.  This example therefore illustrates how the blow-up $B_T$ facilitates integration-by-parts calculations. 

Having said all of this, one still might wonder if the use of $B_T$ could be avoided by performing integration by parts on $\overline{f^T(\mathring{T})}$ instead.  After all, we are allowing $f^T$ to have discontinuities on codimension-2 faces of $T$, so it is reasonable to expect the boundary of $\overline{f^T(\mathring{T})}$ to have ``extra'' codimension-1 faces much like $B_T$ does.
Unfortunately $\overline{f^T(\mathring{T})}$ may fail to be a Whitney manifold. For instance, this could happen if $r_{p,e}^T$ vanishes on a disconnected set which has nonzero measure in $p$. In a situation like this, integration by parts on $\overline{f^T(\mathring{T})}$ is infeasible. However, by pulling back forms by $F^T$ to the manifold $B_T$, which is regular enough to permit an integration by parts formula, we regain this ability even in the smooth category. Blow-ups are also a convenient tool for constructing compatible frames.

\begin{remark}We have ignored the question of whether the piecewise-smooth one-form $f^*\omega$ defined by ${f^*\omega}^i_j|_{\mathring{T}} = \iprod{\nabla^Tf_j^T,f_i^T}_T$ on $T$ is ``really" the connection form associated to the compatible frame $f$, despite the fact that $f$ is discontinuous. Here is one way to answer that question:  Since the pullback of $f^*\theta^i$ to every element interface $e = T \cap T'$ is single-valued, the distributional exterior derivative of $f^*\theta^i$ is simply its elementwise exterior derivative.  Therefore the structure equations $df^*\theta^i = -f^*\omega^i_j \wedge f^*\theta^j$ hold in a distributional sense.

\end{remark}

\subsection{The Integration by Parts Step} \label{integrationbypartssection}
To perform integration by parts, we proceed in each codimension-0 polytope $T$ by lifting the integrals in (\ref{distcurvdef}) to $B_T$ and expanding out the inner products in coordinates. For now, we will suppress the ${}^T$ superscripts and focus on one term of the integral:
\begin{align*}
\int_{\mathring{T}} \iprod{f^*\omega\wedge d\phi} &= \int_{\Phi(\mathring{B}_T)} f^*\omega^i_j \wedge d\phi^j_i \\
&= \int_{\mathring{B}_T} \Phi^*f^*\omega^i_j \wedge \Phi^*d\phi^j_i. 
\end{align*}
Next we apply the facts that $f \circ \Phi = F$ on $\mathring{B}_T$, so $\Phi^*f^* = F^*$, and $\Phi = \pi \circ F$, so $\Phi^* = F^* \pi^*$. This yields 
\[
\int_{\mathring{B}_T} \Phi^*f^*\omega^i_j \wedge \Phi^*d\phi^i_j= \int_{\mathring{B}_T} F^*(\omega^i_j \wedge \pi^*d\phi^j_i).
\]
The form $F^*(\omega^i_j \wedge \pi^*\phi^j_i)$ is $C^1$ and bounded on $\mathring{B}_T$, and summable on $\partial B_T \backslash E_{B_T}$, and $F^*d(\omega^i_j \wedge \pi^*\phi^j_i) = dF^*(\omega^i_j \wedge \pi^*\phi^j_i)$ is summable on $\mathring{B}_T$. Therefore the integration by parts theorem for Whitney manifolds~\cite[Theorem 18A]{whitney} applies:
\[
\int_{\mathring{B}_T} F^*(\omega^i_j \wedge \pi^*d\phi^j_i) = \int_{\mathring{B}_T}F^*(d\omega^i_j \wedge \pi^*\phi^j_i) - \int_{\partial B_T \backslash E_{B_T}} F^*(\omega^i_j \wedge \pi^*\phi^j_i).
\]
The first term can be pulled back down to $\mathring{T}$, and the second term can be split up into integrals over the faces of $B_T$ from different strata:
\begin{align*}
&\int_{\mathring{B}_T}F^*(d\omega^i_j \wedge \pi^*\phi^j_i) - \int_{\partial B_T \backslash E_{B_T}} F^*(\omega^i_j \wedge \pi^*\phi^j_i) \\
&= \int_{\mathring{T}}f^*(d\omega^i_j) \wedge \phi^j_i - \sum_{d = 1}^n\int_{\Phi^{-1}(S_d(T))}F^*(\omega^i_j \wedge \pi^*\phi^j_i).
\end{align*}
For $d > 2$, the integrals over $\Phi^{-1}(\mathring{\Delta}_d)$ vanish for any codimension-$d$ face $\mathring{\Delta}_d \subset S_d(T)$. Indeed, $\phi^j_i$ is an $(n-2)$-form, so its trace $i_{\mathring{\Delta}_d}^* \phi^j_i$ vanishes on any $\mathring{\Delta}_d$ with $d>2$.  Since $i_{\Phi^{-1}(\mathring{\Delta}_d)}^*F^*\pi^* = i_{\Phi^{-1}(\mathring{\Delta}_d)}^*\Phi^* = \Phi^*i_{\mathring{\Delta}_d}^*$, we have $i_{\Phi^{-1}(\mathring{\Delta}_d)}^*F^*\pi^*\phi^i_j = \Phi^*i_{\mathring{\Delta}_d}^*\phi^j_i = 0$.
Therefore the integral of $F^*(\omega^i_j \wedge \pi^*\phi^j_i)$ over $\Phi^{-1}(S_d(T)) = \Phi^{-1}(\bigcup \mathring{\Delta}_d)$ is equal to zero.

Additionally, since there is a smooth (in each component) section $f|_{S_1(T)}$ of $\mathcal{F}_O(T)|_{S_1(T)}$ that continuously extends $f$ (by condition 1), 
\[
\int_{\Phi^{-1}(S_1(T))} F^*(\omega^i_j \wedge \pi^*\phi^j_i) = \int_{S_1(T)}f|_{S_1(T)}^*(\omega^i_j \wedge \pi^*\phi^j_i ) = \int_{S_1(T)} f|_{S_1(T)}^*\omega^i_j \wedge \phi^j_i.
\]
Here, we have implicitly used the fact that $\Phi|_{\Phi^{-1}(S_1(T))} : \Phi^{-1}(S_1(T)) \to S_1(T)$ is a diffeomorphism on each component; see Appendix~\ref{appendix}.

Plugging this back into (\ref{distcurvdef}), we get
\begin{align}
\iprod{\iprod{f^*\Omega_{\mathrm{dist}},\phi}} &= \sum_{T \subseteq M}\int_{\mathring{T}} {f^T}^*(d\omega^i_j + \omega^i_k \wedge \omega^k_j )\wedge\phi^j_i - \int_{S_1(T)} f^T|_{S_1(T)}^*\omega^i_j \wedge \phi^j_i \label{distcurvpart0}\\
&\quad\quad - \int_{{\Phi^{T}}^{-1}(S_2(T))} {F^T}^*(\omega^i_j \wedge \pi^*\phi^j_i) \notag \\
&= \sum_{T \subseteq M}\int_{\mathring{T}} \iprod{{f^T}^*(d\omega + \frac{1}{2}[\omega,\omega]) \wedge \phi} - \sum_{\mathring{e} \subset \mathring{M}}\int_{\mathring{e}} \iprod{[\![f^T|_{\mathring{e}}^*\omega]\!] \wedge \phi} \label{distcurvpart1}\\
&\quad\quad - \sum_{\mathring{e} \subset \mathring{M}} \sum_{T \supset p} \int_{{\Phi^T}^{-1}(\mathring{p})} {F^T}^* \iprod{\omega \wedge \pi^*\phi}. \notag
\end{align}

For the integrals over codimension-1 polytopes, let $\Two^T_e \in \Omega^1(e,\mathfrak{so}(n))$ be the form defined by $(\Two^T_e)^i_j = 0$ for all $1 \le i,j \le n-1$ and $(\Two^T_e)^n_i = \iprod{\nabla (E_e^T)_i,\vec{n}}$ for $1 \le i \le n$. We will use it to represent the second fundamental form of $e$. Also let $\tilde{\omega}_e^T = (E_e^T)^*\omega$, that is, the full connection form for the frame $E_e^T$. Then the gauge transformation law~\eqref{gauge} tells us that $f^T|_{\mathring{e}}^*\omega = \mathrm{Ad}((\mu_e^T)^{-1})(\tilde \omega^T_e) + (\mu_e^T)^{-1}d\mu_e^T$.

Since the first $n-1$ vectors in $E_e^T$ do not depend on $T$, there exists an $\mathfrak{so}(n-1)$-valued form $J_e$ that does not depend on $T$ so that $\tilde\omega^T_e - \Two^T_e = \begin{bmatrix}J_e & 0 \\ 0 & 0\end{bmatrix}$. For interior edges $e = T \cap T'$, $(\mu_e^T)^{-1}d\mu_e^T = (\mu_e^{T'})^{-1}d\mu_e^{T'}$, and the following equation gives us the jump in $\omega$:
\begin{align*}
[\![f^T|_{\mathring{e}}^* \omega]\!] &= \mathrm{Ad}((\mu_e^T)^{-1})(\tilde\omega^T_e) - \mathrm{Ad}((\mu_e^{T'})^{-1})(\tilde\omega^{T'}_e)\\
& = \mathrm{Ad}((\mu_e^T)^{-1})\left(\tilde{\omega}^T_e - \mathrm{Ad}\left(\begin{bmatrix}I & 0 \\ 0 & -1\end{bmatrix}\right)(\tilde{\omega}^{T'}_e)\right) \\
&= \mathrm{Ad}((\mu_e^T)^{-1})(\Two_e^T + \Two_e^{T'}).
\end{align*}
This could still be interpreted as a jump in second fundamental form, since the two fundamental forms are being evaluated with normal vectors that point in opposite directions. We will therefore denote $[\![\Two_e]\!] := \Two_e^T + \Two_e^{T'}$ when $e = T \cap T'$. The integrals over codimension-1 polytopes then read
\begin{equation}\label{curvjump}
\int_{\mathring{e}}\iprod{[\![f^{T}|_{\mathring{e}}^*\omega]\!]\wedge \phi} = \int_{\mathring{e}}\iprod{\mathrm{Ad}((\mu_e^T)^{-1})([\![\Two_e]\!]) \wedge \phi}.
\end{equation}

While this expression involves a codimension-0 polytope $T$ of which $e$ is a face, the expression is not actually dependent on $T$, because both the sign of the integrand and the orientation of the integral change when $T$ and $T'$ are interchanged. The only caveat is that the integral must be evaluated using the induced orientation of $e$ from $T$. 

Note also that we have made key use of the fact that $i_{\mathring{e}}^*\phi = 0$ for $e \subset \partial M$, so there is no issue with the boundary terms here. See Section \ref{gengaussbonnetsection} for some discussion on how things change if $\phi$ is allowed to not pull back to zero on $\partial M$.

\subsection{Derivation of the Angle Defect}\label{angledefectsection}

Expanding out the last term on the right of \ref{distcurvpart1} takes more work. First, we will use condition 3 on $f$ to change the integrals over ${\Phi^T}^{-1}(\mathring{p})$ to integrals over $(0,1) \times \mathring{p}$:
\[\int_{{\Phi^T}^{-1}(\mathring{p})}{F^T}^*(\omega^i_j \wedge \pi^*\phi^j_i) = \int_{(0,1) \times \mathring{p}}{\psi_p^T}^*{F^T}^*(\omega^i_j\wedge\pi^*\phi^j_i).\]

The form ${\psi_p^T}^*{F^T}^*\pi^*\phi^j_i = {\psi_p^T}^*{\Phi^T}^*\phi^j_i$ is basic for the fiber bundle $q: [0,1] \times p \to p$, i.e. ${\psi_p^T}^*{\Phi^T}^*\phi^j_i = q^*i_p^*\phi^j_i$. Then Fubini's theorem can be applied to this integral:
\[\int_{(0,1) \times \mathring{p}}{\psi_p^T}^*{F^T}^*(\omega^i_j\wedge\pi^*\phi^j_i) = \int_{\mathring{p}}\left(\int_{(0,1) \times \{x\}} {\psi_p^T}^*{F^T}^*\omega^i_j\right) \phi^j_i.\]

Let $\xi^T_p(x) := \eta(dF^T \circ d\psi_p^T(\pdiff{}{s}|_{(x,s)}))$ be the $\mathfrak{so}(n)$-valued function representing counterclockwise rotation around $p$ at the angular speed $r_{p,e}^T(x)$. The fact that $\xi$ is independent of $s$ is a consequence of condition 3. Then since $\omega(X) = \eta(X)$ for any vertical vector $X$, the integral can be made simpler:
\begin{align*}
\int_{\mathring{p}}\left(\int_{(0,1) \times \{x\}}{\psi_p^T}^*{F^T}^*\omega^i_j\right)\phi^j_i 
&= \int_{\mathring{p}}\left(\int_0^1\omega^i_j\left(dF^T \circ d\psi_p^T\left(\pdiff{}{s}\right)\right)ds\right)\phi^j_i\\
&= \int_{\mathring{p}}\left(\int_0^1(\xi^T_p)^i_j(x)ds\right)\phi^j_i = \int_{\mathring{p}}(\xi^T_p)^i_j\phi^j_i.
\end{align*}
This means we can simplify the final term of (\ref{distcurvpart1}) into a single integral over $\mathring{p}$:
\begin{equation}\label{angledefectpart1} 
\sum_{T \supset p} \int_{{\Phi^T}^{-1}(\mathring{p})}{F^T}^* ( \omega^i_j \wedge \pi^*\phi^j_i ) = \int_{\mathring{p}}\iprod{\sum_{T \supset p} \xi^T_p \wedge \phi}.
\end{equation}

The vector $\xi^T_p \in \mathfrak{so}(n)$ can be explicitly calculated, using condition 3 of the compatible frame, as $\mathrm{Ad}((A_{p,e}^T\mu_e^T)^{-1})(r^T_{p,e}w^{n-1}_n)$. The sum of these terms is the derivative of 
\[
G_p(s) := \prod_{j = k}^1\mathbf{Ad}((A_{p,e_j}^{T_j}\mu_{e_j}^{T_j})^{-1})\left(\exp\left(s~r_{p,e_j}^{T_j}w^{n-1}_n\right)\right)
\]
at $s = 0$, where $T_j$ and $e_j$ are some enumeration of the codimension-0 and codimension-1 faces surrounding $p$ such that for each $j=1,\dots,{k-1}$, $T_j\cap T_{j+1} = e_{j+1}$, the rotation from $E_{p,e_j}^{T_j}$ to $E_{p,e_{j+1}}^{T_j}$ is in the counterclockwise direction for each $j = 1,\dots, k-1$, and $k$ is the number of codimension-1 faces incident to $p$. Another way to characterize this enumeration is that $E_{p,e_{j}}^{T_j}$ has an inward-pointing normal vector as its last element and $E_{p,e_{j+1}}^{T_j}$ has an outward-pointing normal vector as its last element. If $p$ is an interior polytope, then we also require that the preceding statements hold for $j=k$, where $T_{k+1}:=T_1$ and $e_{k+1}:=e_1$.

Note that above, the multiplication is arranged so that $j = k$ is the leftmost factor and $j = 1$ is the rightmost factor. The order of multiplication here technically doesn't matter, as the derivative of $G_p(s)$ at $s = 0$ is the same regardless of the ordering of multiplications, but this is the convention we chose.

Next we will use algebraic manipulations much like those in equation (\ref{continuousframejump}) to derive an expression for $A_{p,e_{j+1}}^{T_{j+1}}\mu_{e_{j+1}}^{T_{j+1}}(A_{p,e_j}^{T_j}\mu_{e_j}^{T_j})^{-1}$.

For $x \in p$, condition 3 on the compatible frame (and the fact that $F^{T_j}$ is continuous) implies 
\[
f^{T_j}|_{e_{j+1}}(x) = F^{T_j}(\psi_p^{T_j}(1,x)) = E_{p,e_j}^{T_j} \cdot \exp\left(r_{p,e_j}^{T_j}(x)w^{n-1}_n\right)A_{p,e_j}^{T_j}\mu_{e_j}^{T_j}.
\]

However, we also know that $f^{T_j}|_{e_{j+1}} = E_{p,e_{j+1}}^{T_j} \cdot A_{p,e_{j+1}}^{T_j}\mu_{e_{j+1}}^{T_j}$, so because the rotation from $E_{p,e_j}^{T_j}$ to $E_{p,e_{j+1}}^{T_j}$ is in the counterclockwise direction, we have $E_{p,e_{j+1}}^{T_j} = E_{p,{e_j}}^{T_j} \cdot \exp\left(\theta_p^{T_j}w^{n-1}_n\right)$, so we can use the fact that the group action is free on each fiber of $\mathcal{F}_O(T_j)$ to derive
\[
\exp\left(r_{p,e_j}^{T_j}w^{n-1}_n\right)A_{p,e_j}^{T_j}\mu_{e_j}^{T_j} = \exp\left(\theta_p^{T_j}w^{n-1}_n\right)A_{p,e_{j+1}}^{T_j}\mu_{e_{j+1}}^{T_j}.
\]
Lastly we can apply the fact that $A_{p,e_{j+1}}^{T_j}\mu_{e_{j+1}}^{T_j} = A_{p,e_{j+1}}^{T_{j+1}}\mu_{e_{j+1}}^{T_{j+1}}$ to derive
\begin{equation}\label{discreteframejump}
A_{p,e_{j+1}}^{T_{j+1}}\mu_{e_{j+1}}^{T_{j+1}}(A_{p,e_j}^{T_j}\mu_{e_j}^{T_j})^{-1} = \exp\left((r_{p,e_j}^{T_j} - \theta^{T_j}_p)w^{n-1}_n\right).
\end{equation}
Then $G_p(s) =  (A_{p,e_k}^{T_k}\mu_{e_k}^{T_k})^{-1}\exp\left(\gamma(s)w^{n-1}_n\right)A_{p,e_1}^{T_1}\mu_{e_1}^{T_1}$, where 
\[\gamma(s) := sr_{p,e_k}^{T_k} + \sum_{j=1}^{k-1} (1 + s)r_{p,e_j}^{T_j} - \theta_p^{T_j}.\] 

For interior polytopes, this can be simplified with one more application of (\ref{discreteframejump}) to obtain the equivalent expression
\begin{equation}\label{Gpexpr2}
G_p(s) = \mathbf{Ad}((A_{p,e_k}^{T_k}\mu_{e_k}^{T_k})^{-1})\left(\exp\left[(\gamma(s) + r_{p,e_k}^{T_k} - \theta_p^{T_k})w^{n-1}_n\right]\right).
\end{equation}

Therefore for interior polytopes, $G_p(0) = I$ implies that $\gamma(0) + r_{p,e_k}^{T_k} - \theta_p^{T_k} = 2\pi m$ for an integer $m$. The integer $m$ must be continuous with respect to continuous deformations of $g$ and $f$.  Thus, using the final condition for compatible frames and the fact that $r_{p,e}^T = 0$ for continuous frames and continuous metrics, we get $m = -1$.  Therefore $\sum_{j=1}^k r_{p,e_j}^{T_j} = -\Theta_p$, where $\Theta_p := 2\pi - \sum_{j = 1}^k \theta_p^{T_j}$ is the angle defect of $p$ at $x$. Lastly we note that, due to equation (\ref{Gpexpr2}), 
\[
\left.\pdiff{}{s}\right|_{s = 0}G_p(s) = \mathrm{Ad}\left((A_{p,e_k}^{T_k}\mu_{e_k}^{T_k})^{-1}\right)\left(\sum_{j = 1}^k r_{p,e_j}^{T_j} w^{n-1}_n\right),
\]
so we finally get 
\begin{equation}\label{angledefectpart2}
\int_{\mathring{p}}\iprod{\sum_{T \supset p} \xi^T_p \wedge \phi} = -\int_{\mathring{p}} \Theta_p\iprod{\mathrm{Ad}((A_{p,e_k}^{T_k}\mu_{e_k}^{T_k})^{-1})(w^{n-1}_n) \wedge \phi}.
\end{equation}
The left-hand side of this equation clearly doesn't depend on the particular enumeration $T_j$ we picked, so we will omit the subscript ${}_k$ from now on and remember that although the expression on the right-hand side involves particular codimension-0 and codimension-1 polytopes $T$ and $e$ of which $p$ is a face, the expression is actually independent of the particular choice. The only caveat is that the integral needs to be evaluated with the orientation on $p$ induced from $e$, and $e$ needs to be chosen so that the last entry of $E_{p,e}^T$ is an inward-pointing normal vector; otherwise the sign on $\Theta_p$ would need to flip.

\subsection{Properties of the Distributional Curvature}

Combining~\eqref{distcurvpart1},~\eqref{curvjump},~\eqref{angledefectpart1}, and~\eqref{angledefectpart2}, we arrive at our final expression for the distributional curvature:
\begin{theorem}\label{distcurvthrm} 
Suppose $f = \bigsqcup_{T \subseteq M} f^T$ satisfies all of the compatibility conditions in Definition \ref{compatibilitydef} and $\phi \in C^\infty_c\Omega^{n-2}(M;\mathfrak{so}(n))$ is a smooth compactly supported $\mathfrak{so}(n)$-valued $(n-2)$-form which vanishes when pulled back to $\partial M$. Then 
\begin{align}\label{distcurvfinal}
\iprod{\iprod{f^*\Omega_\mathrm{dist},\phi}} = \sum_{T \subseteq M}\int_{\mathring{T}}&\iprod{{f^T}^*\Omega \wedge \phi} - \sum_{\mathring{e} \subset \mathring{M}}\int_{\mathring{e}}\iprod{\mathrm{Ad}((\mu_e^T)^{-1})([\![\Two_e]\!]) \wedge i_{\mathring{e}}^*\phi} \notag \\
&+ \sum_{\mathring{p} \subset \mathring{M}}\int_{\mathring{p}}\Theta_p\iprod{\mathrm{Ad}((A_{p,e}^T\mu_e^T)^{-1})(w^{n-1}_n) \wedge i_{\mathring{p}}^*\phi}.
\end{align}
Furthermore, the distributional curvature is tensorial, in the following sense: if $h$ is a continuous piecewise-smooth map $M \to O(n)$, then $f \cdot h$ is a compatible frame and 
\[
\iprod{\iprod{(f \cdot h)^*\Omega_{\mathrm{dist}},\phi}} = \iprod{\iprod{\mathrm{Ad}(h^{-1})(f^*\Omega_{\mathrm{dist}}),\phi}}.
\]
\end{theorem}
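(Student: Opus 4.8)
The plan is to treat the two assertions separately, since the displayed formula~\eqref{distcurvfinal} is essentially a bookkeeping exercise while the tensoriality claim requires genuinely new (if short) arguments. For the formula, I would simply assemble the pieces already in hand: substitute the codimension-1 evaluation~\eqref{curvjump} and the codimension-2 evaluation obtained by chaining~\eqref{angledefectpart1} with~\eqref{angledefectpart2} into the integration-by-parts identity~\eqref{distcurvpart1}, recognizing that $f^{T*}(d\omega+\tfrac12[\omega,\omega]) = f^{T*}\Omega$ by~\eqref{structureeqn2}. The only point needing a word of justification is that faces of codimension $d>2$ contribute nothing, which follows because $\phi$ is an $(n-2)$-form and hence has vanishing pullback to any $\mathring\Delta_d$ with $d>2$; this was already noted in Section~\ref{integrationbypartssection}. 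Rewriting the codimension-1 jumps with the induced orientation on $e$ and the codimension-2 terms with the orientation making $E_p$ positively oriented then yields~\eqref{distcurvfinal} verbatim.

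For the tensoriality statement I would first establish that $f\cdot h$ is compatible whenever $f$ is and $h\in C^\infty(M;O(n))$. The governing principle is that a globally smooth $h$ is single-valued across every face, so right-multiplication by $h$ cannot create any new discontinuity and commutes with the fiber ($s$-)direction of the blow-up. Concretely, I would check the four conditions of Definition~\ref{compatibilitydef} in turn: for condition 1, the lift of $f^T\cdot h$ to $B_T$ is $F^T\cdot(h\circ\Phi^T)$, which remains Lipschitz and $C^2$ off the exceptional set because $h\circ\Phi^T$ is smooth; for condition 2, writing $E_e^T\cdot(\mu_e^T h|_e) = (f\cdot h)^T|_e$ shows the new transition matrix is $\tilde\mu_e^T = \mu_e^T h|_e$, and since $h|_e$ is single-valued the relation $\tilde\mu_e^{T'} = \begin{bmatrix}I&0\\0&-1\end{bmatrix}\tilde\mu_e^T$ is inherited; for condition 3, because $h(x)$ is constant along the $s$-direction (as $\Phi^T\circ\psi_p^T(s,x)=x$) the rotation rate $r_{p,e}^T$ is unchanged while $\mu_e^T$ is replaced by $\mu_e^T h$; and for condition 4, one may take $f(t)\cdot h$ as the required homotopy, which connects the smooth frame $f_0\cdot h$ to $f\cdot h$ through $g(t)$-orthonormal frames. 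I would also record the two consequences needed downstream: $\tilde\mu_e^T = \mu_e^T h$ and $A_{p,e}^T\tilde\mu_e^T = A_{p,e}^T\mu_e^T h$, whereas the frame-independent data $f^{T*}\Omega$, $[\![\Two_e]\!]$, $\Theta_p$, and $E_e^T$ are either unchanged or change only through an $\mathrm{Ad}$-factor.

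With compatibility in hand, the transformation law follows by applying~\eqref{distcurvfinal} to $f\cdot h$ and tracking the $\mathrm{Ad}$-factors term by term. Tensoriality of $\Omega$ gives $(f\cdot h)^{T*}\Omega = \mathrm{Ad}(h^{-1})(f^{T*}\Omega)$, and the relations $\tilde\mu_e^T=\mu_e^T h$, $A_{p,e}^T\tilde\mu_e^T = A_{p,e}^T\mu_e^T h$ give $\mathrm{Ad}((\tilde\mu_e^T)^{-1}) = \mathrm{Ad}(h^{-1})\mathrm{Ad}((\mu_e^T)^{-1})$ and likewise for the angle-defect factor. Hence each of the three integrands for $f\cdot h$ is exactly $\mathrm{Ad}(h^{-1})$ applied to the corresponding integrand for $f$. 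Invoking the pointwise symmetry $\iprod{\mathrm{Ad}(h^{-1})(\alpha)\wedge\beta} = \iprod{\alpha\wedge\mathrm{Ad}(h)(\beta)}$ of the wedge inner product (with $h=h(x)$), together with the fact that $\mathrm{Ad}(h)$ commutes with the pullbacks $i_e^*,i_p^*$ since $h$ is globally defined, converts every term into the corresponding term of~\eqref{distcurvfinal} evaluated at $f$ with test form $\mathrm{Ad}(h)(\phi)$. One must check that $\mathrm{Ad}(h)(\phi)$ is admissible: it is $\mathfrak{so}(n)$-valued because $\mathrm{Ad}(O(n))$ preserves $\mathfrak{so}(n)$, smooth and compactly supported because $h$ is, and has vanishing pullback to $\partial M$ because $i_{\partial M}^*\phi=0$. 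Summing the three terms gives $\iprod{\iprod{(f\cdot h)^*\Omega_{\mathrm{dist}},\phi}} = \iprod{\iprod{f^*\Omega_{\mathrm{dist}},\mathrm{Ad}(h)(\phi)}}$, which is precisely $\iprod{\iprod{\mathrm{Ad}(h^{-1})(f^*\Omega_{\mathrm{dist}}),\phi}}$ by definition.

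The part I expect to demand the most care is the verification of compatibility for $f\cdot h$, and in particular conditions 1 and 4: one must confirm that multiplying the Lipschitz/$C^2$ lift $F^T$ by the smooth factor $h\circ\Phi^T$ preserves exactly the regularity needed for the Stokes/integration-by-parts argument underlying~\eqref{distcurvfinal}, and that the homotopy $f(t)\cdot h$ genuinely satisfies conditions 1--3 at every time $t$ with the stated joint continuity in $(t,x)$. By contrast, the algebraic tracking of the $\mathrm{Ad}$-factors is routine once the right-multiplication rule $\tilde\mu_e^T=\mu_e^T h$ is in place. It is worth emphasizing that the potentially dangerous boundary contribution $[\![h^{-1}dh]\!]$ appearing in the heuristic computation of Section~\ref{compatframesection} never resurfaces here: because we work from the already-integrated formula~\eqref{distcurvfinal} and $h$ is globally continuous, that term has been eliminated at the outset.
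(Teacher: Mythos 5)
Your proposal is correct and follows essentially the same route as the paper's proof: assemble~\eqref{distcurvpart1}, \eqref{curvjump}, \eqref{angledefectpart1}, and~\eqref{angledefectpart2} for the formula, then track the right-multiplication rule $\tilde\mu_e^T = \mu_e^T h$ through each term and use the symmetry $\iprod{\mathrm{Ad}(h^{-1})(\alpha)\wedge\beta} = \iprod{\alpha\wedge\mathrm{Ad}(h)(\beta)}$ to reduce tensoriality to the definition $\iprod{\iprod{\mathrm{Ad}(h^{-1})(f^*\Omega_{\mathrm{dist}}),\phi}} = \iprod{\iprod{f^*\Omega_{\mathrm{dist}},\mathrm{Ad}(h)(\phi)}}$. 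The only difference is that you verify the compatibility of $f \cdot h$ condition by condition (correctly, including the observation that $h$ is constant along the $s$-fibers over codimension-2 faces), whereas the paper simply asserts this step as clear.
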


\begin{proof}
The proof of the expression for the distributional curvature was already carried out in subsections 2.1-2.3, and it is clear that $f \cdot h$ satisfies all the compatibility conditions if $h$ is continuous and piecewise-smooth, so all that remains to be proven is the tensoriality. First, we will expand out the expression for the distributional curvature in the frame $\hat f = f \cdot h$. Note that since $f = E_e^T \cdot \mu_e^T$, $\hat\mu_e^T = \mu_e^Th$, so the expression is not~hard to evaluate:

\begin{align*}
&\iprod{\iprod{\hat{f}^*\Omega_{\mathrm{dist}},\phi}} \\
&= \sum_{T \subseteq M} \int_{\mathring{T}}\iprod{(f^T \cdot h)^*\Omega \wedge \phi} - \sum_{\mathring{e} \subset \mathring{M}}\int_{\mathring{e}}\iprod{\mathrm{Ad}((\mu_e^Th)^{-1})([\![\Two_e]\!]) \wedge i_{\mathring{e}}^*\phi} \\
&\quad + \sum_{\mathring{p} \subset \mathring{M}}\int_{\mathring{p}}\Theta_p\iprod{\mathrm{Ad}((A_{p,e}^T\mu_e^Th)^{-1})(w^{n-1}_n) \wedge i_{\mathring{p}}^*\phi}\\
&= \sum_{T \subseteq M} \int_{\mathring{T}}\iprod{\mathrm{Ad}(h^{-1})({f^T}^*\Omega) \wedge \phi} -\sum_{\mathring{e} \subset \mathring{M}} \int_{\mathring{e}}\iprod{\mathrm{Ad}(h^{-1})(\mathrm{Ad}((\mu_e^T)^{-1})([\![\Two_e]\!])) \wedge i_{\mathring{e}}^*\phi} \\
&\quad + \sum_{\mathring{p} \subset \mathring{M}} \int_{\mathring{p}} \Theta_p\iprod{\mathrm{Ad}(h^{-1})(\mathrm{Ad}((A_{p,e}^T\mu_e^T)^{-1})(w^{n-1}_n)) \wedge i_{\mathring{p}}^*\phi}\\
&= \sum_{T \subseteq M} \int_{\mathring{T}} \iprod{{f^T}^*\Omega \wedge \mathrm{Ad}(h)(\phi))} - \sum_{\mathring{e} \subset \mathring{M}}\int_{\mathring{e}} \iprod{\mathrm{Ad}((\mu_e^T)^{-1})([\![\Two_e]\!]) \wedge \mathrm{Ad}(h)(i_{\mathring{e}}^*\phi)} \\
&\quad + \sum_{\mathring{p} \subset \mathring{M}} \int_{\mathring{p}} \Theta_p\iprod{\mathrm{Ad}((A_{p,e}^T\mu_e^T)^{-1})(w^{n-1}_n) \wedge \mathrm{Ad}(h)(i_{\mathring{p}}^*\phi)}\\
&= \iprod{\iprod{f^*\Omega_{\mathrm{dist}},\mathrm{Ad}(h)(\phi)}}\\
&= \iprod{\iprod{\mathrm{Ad}(h^{-1})(f^*\Omega_{\mathrm{dist}}),\phi}}.
\end{align*}
\end{proof}

One nice consequence of Equation (\ref{distcurvfinal}) is that, if $K \subseteq M$ is compact, then there exists a number $C_K < \infty$ such that for all test forms $\phi$ having support contained in $K$, $|\iprod{\iprod{f^*\Omega_\mathrm{dist},\phi}}| < C_K\sup_{x \in M} \|\phi(x)\|_{g}$. In other words, $f^*\Omega_{\mathrm{dist}}$ is a distribution, or more specifically a current, of order 0. Additionally, since the curvature depends only on integrals of $\phi$ over submanifolds of codimension $\le 2$, a coordinate-dependent $H^2$ norm can be chosen on $K$, and by the Rellich theorem, $\|f^*\Omega_\mathrm{dist}\|_{H^{-2}(K)} < \infty$.

Another observation we can make is that the right-hand side of~\eqref{distcurvfinal} evaluates to zero if $\phi$ is instead $\mathrm{sym}_{n \times n}(\mathbb{R})$-valued, since all of the integrals are against $\mathfrak{so}(n)$-valued forms. This supports the intuitive notion that $f^*\Omega_{\mathrm{dist}}$ is ``$\mathfrak{so}(n)$-valued'', as $\mathfrak{so}(n)$ and $\mathrm{sym}_{n \times n}(\mathbb{R})$ are orthogonal under the bilinear product we are using.

The space of test forms can also be expanded to include discontinuous forms of a particular type. 

\begin{definition}\label{compattestformdef}Consider a compactly supported $\mathfrak{so}(n)$-valued $(n-2)$-form $\phi$ which is $C^2$ on $\mathring{T}$ for each codimension-0 polytope $T \subseteq M$. Such a form will be called a \emph{compatible $\mathfrak{so}(n)$-valued test form} if the following conditions are satisfied:

\begin{enumerate}
\item{For each $T \subseteq M$ there exists a unique Lipschitz continuous extension of ${\Phi^T}^*\phi|_{\mathring{T}}$ to $B_T$, which will be called $\tilde{\phi}^T \in C^0\Omega^{n-2}(B_T;\mathfrak{so}(n))$. In other words, ${\Phi^T}^*\phi|_{\mathring{T}}$ has uniformly bounded first derivatives in any coordinate chart. This implies $i_{\mathring{e}}^*\phi|_T := {\Phi^T}^{-*}i_{{\Phi^T}^{-1}(\mathring{e})}^*\tilde{\phi}^T$ is a well-defined continuous extension of $\phi|_{\mathring{T}}$. It also implies that if $d\Phi^{T}(v) = 0$, then $v \lrcorner\tilde{\phi}^T = 0$ (this is relevant only on the boundary of $B_T$).}

\item{If $e = T \cap T'$ is a codimension-1 interface, then $i_{\mathring{e}}^*\phi|_T = i_{\mathring{e}}^*\phi|_{T'}$. In other words, $[\![i_{\mathring{e}}^*\phi]\!] = 0$.}
\item{The form ${\psi_p^T}^*\tilde\phi^T$ is basic for the fibration $q: [0,1] \times p \to p$. More precisely, if $\chi_{s,p}^T(x) :=  \psi_p^T(s,x)$, Then ${\chi_{s,p}^T}^*\tilde{\phi}^T = {\chi_{0,p}^T}^*\tilde{\phi}^T$ for any $s \in [0,1]$. In other words, $i_{\mathring{p}}^*\phi|_T$ is well-defined and continuous on $p$, although $\phi$ itself may be discontinuous at $p$. }
\item{If $\Delta_d \subset \partial M$, then $i_{{\Phi^{T}}^{-1}(\mathring{\Delta}_d)}^*\tilde{\phi}^T = 0$ for each $T \supset \Delta_d$. In other words, $i_{\partial M}^*\phi = 0$.}
\end{enumerate}
The set of compatible test forms for the manifold $M$ with mesh $\mathcal{T}$ will be called $\mathcal{C}(\mathcal{T},M)$.
\end{definition}

If $\phi$ is a compatible $\mathfrak{so}(n)$-valued test form, then the proofs in Sections \ref{integrationbypartssection}-\ref{angledefectsection} remain valid with the small modification of using $\tilde\phi^T$ in place of ${\Phi^T}^*\phi$. Note that items (2) and (3) together imply that $i_{\mathring{p}}^*\phi = {\chi_{s,p}^T}^*\tilde\phi^T$ is well-defined independently of $s$ and $T$.  A notable example of compatible test forms are the forms $\phi_{ij} \otimes \star f^*(\theta^i \wedge \theta^j)$, where each $\phi_{ij}: M \to \mathfrak{so}(n)$, $i,j=1,2,\dots,n$, is a compactly supported continuous map that is $C^2$ on the interior of each codimension-0 polytope and vanishes on $\partial M$. The map $\star: \Omega^2(M) \to \Omega^{n-2}(M)$ is, on each codimension-0 polytope $T$, the Hodge star operator associated to the metric $g^T$.

\subsection{Removing Frame-Dependency}\label{deframesection}

While useful for analysis, (\ref{distcurvfinal}) is inconvenient for many applications because it requires knowledge of a specific compatible frame, which are not easy to construct or represent. The purpose of this subsection is to remove this dependency by expressing the distributional curvature in terms of endomorphism-valued forms. The cost is that the test forms must be discontinuous and metric-dependent.

First, we can define a symmetric nondegenerate bilinear form on the vector space $\mathrm{End}(T_xM) = T_xM \otimes T^*_xM$ by setting 

\begin{equation}\label{endoprod}\iprod{\alpha \otimes v,\beta \otimes w} := \alpha(w)\beta(v)\end{equation}
for $\alpha, \beta \in T_x^*(M)$, $v,w \in T_xM$ and extending multilinearly. Expressed in a basis $\{F_i\}_{i = 1}^n$ and its dual basis $\{F^i\}_{i = 1}^n$, this would be $\iprod{A^i_j F_i\otimes F^j,B^k_lF_k \otimes F^l} = A^i_jB^j_i$. Therefore, picking any basis for $T_xM$ gives a linear isometry between $\mathfrak{gl}(n)$ and $\mathrm{End}(T_xM)$, by setting $F^j(\hat{\phi}^T(F_i)) := \phi^j_i$. The left-hand side of this equation could be more succinctly written $([\hat{\phi}]_{F})^j_i$, where $[\hat{\phi}]_F$ is the matrix representation of the linear map $\hat{\phi}: T_xM \to T_xM$ in the basis $F$. In particular, for $y \in B_T$, the orthonormal basis $F^T(y)$ gives a special isometry $\Psi_y^T: \mathfrak{gl}(n) \to \mathrm{End}(T_{\Phi^T(y)}M)$. 

This pointwise isometry in turn defines a map $\Psi_f: \bigsqcup_{T \subseteq M}C^0\Omega^{n-2}(\mathring{T};\mathfrak{gl}(n)) \to \bigsqcup_{T \subseteq M}C^0\Omega^{n-2}(\mathring{T};\mathrm{End}(TM))$, by setting $\Psi_f(\phi)|_x$ equal to $\Psi_{{\Phi^T}^{-1}(x)}^T(\phi|_x)$ if $x$ is in $\mathring{T}$. Clearly, because $\Phi_y^T$ is invertible for all $y \in \mathring{B}_T$ and $\Phi^T$ is a diffeomorphism when restricted to $\mathring{B}_T$, $\Psi_f$ is a bijection, with inverse given by $\Psi_f^{-1}(\hat{\phi})|_x := (\Psi_{{\Phi^T}^{-1}(x)}^T)^{-1}(\hat{\phi}|_x)$.

\begin{definition}
Let $f$ be a compatible frame. A \emph{compatible $\mathrm{End}(TM)$-valued test form} is the image of a compatible $\mathfrak{so}(n)$-valued test form (see Definition \ref{compattestformdef}) under the map $\Psi_f$. The space of compatible $\mathrm{End}(TM)$-valued test forms is denoted $\mathcal{A}(f,\mathcal{T},M) := \Psi_f(\mathcal{C}(\mathcal{T},M))$.
\end{definition}

Several properties of $\mathrm{End}(TM)$-valued test forms make them more suited to frame-independent computations.

Let $y \in B_T$ and $h \in O(n)$, and let $E$ be the basis of $T_{\Phi^T(y)}M$ determined by $F = E \cdot h$. Then the adjoint action of $h$ changes $\Psi_y^T(u)$ by essentially changing the basis it is represented in:

\begin{align}
F^j\left(\Psi_y^T(\mathrm{Ad}(h)(u))(F_i)\right) 
&= h^j_lu^l_k(h^{-1})^k_i \notag \\
&= h^j_l F^l\left(\Psi_y^T(u)(F_k)\right) (h^{-1})^k_i\notag \\ 
&= h^j_l F^l\left(\Psi_y^T(u)(F_k(h^{-1})^k_i)\right) \notag \\ 
&= E^j\left(\Psi_y^T(u)(E_i)\right). \label{endochangeofbasis}
\end{align}

Since $F^T$ is continuous and Lipschitz on $B_T$ and ${\Phi^T}^*\phi|_{\mathring{T}}$ has a unique Lipschitz extension $\tilde{\phi}^T$ on $B_T$, there exists a unique Lipschitz extension of ${\Phi^T}^*\hat{\phi}|_{\mathring{T}}$ to $B_T$, which at each point $y \in B_T$ is equal to $\Psi_y^T(\tilde{\phi}^T|_y)$. We will use $\hat{\phi}^T$ to refer to this extension of ${\Phi^T}^*\hat{\phi}|_{\mathring{T}}$. Similarly to the case for compatible $\mathfrak{so}(n)$-valued test forms, the pullback $i_{\mathring{e}}^*\hat\phi|_{T} := {\Phi^T}^{-*}i_{{\Phi^T}^{-1}(\mathring{e})}^*\hat\phi^T$ is a well defined continuous extension of $\hat\phi|_{\mathring{T}}$.

The two key properties we will need about compatible $\mathrm{End}(TM)$-valued test forms are proved in the following lemma.

\begin{lemma}
Let $\hat\phi = \Psi_f(\phi)$ be a compatible $\mathrm{End}(TM)$-valued test form. Then the following change of basis equation holds for each $T,T' \subseteq M$ and $e = T \cap T'$:

\begin{equation}\label{endtestform1}
\![i_{\mathring{e}}^*\hat{\phi}|_{T}]_{E_e^{T}} = \mathrm{Ad}\left(\begin{bmatrix}I & 0 \\ 0 & -1\end{bmatrix}\right)\left([i_{\mathring{e}}^*\hat{\phi}|_{T'}]_{E_e^{T'}}\right).
\end{equation}

Additionally, the following equation holds for each $p \subset e \subset T$ such that $E_{p,e}^T$ has an inward-pointing normal vector as its lasty entry:

\begin{equation}\label{endtestform2}
[{\chi_{s,p}^T}^*\hat{\phi}^T]_{E_{p,e}^T} = \mathrm{Ad}\left(\exp\left(s~r_{p,e}^Tw^{n-1}_n\right)\right) \left([{\chi_{0,p}^T}^*\hat{\phi}^T]_{E_{p,e}^T}\right).
\end{equation}
Here, $\chi_{s,p}^T$ is the map defined within condition 3 of Definition~\ref{compattestformdef}.
\end{lemma}

\begin{proof}
By (\ref{endochangeofbasis}), at points $x \in e = T \cap T'$, we have on $T$ that $\mathrm{Ad}((\mu_e^T)^{-1})([i_{\mathring{e}}^*\hat{\phi}|_T]_{E_e^T}) = i_{\mathring{e}}^*\phi = \mathrm{Ad}((\mu_e^{T'})^{-1})([i_{\mathring{e}}^*\hat{\phi}|_{T'}]_{E^{T'}_e})$, since $\phi$ is compatible. However, because $\mu_e^{T'} = \begin{bmatrix}I & 0 \\ 0 & -1\end{bmatrix}\mu_e^T$, we have
\[
\mathrm{Ad}((\mu_e^T)^{-1})\left([i_{\mathring{e}}^*\hat\phi|_T]_{E_e^T} - \mathrm{Ad}\left(\begin{bmatrix}I & 0 \\ 0 & -1\end{bmatrix}\right)\left([i_{\mathring{e}}^*\hat\phi|_{T'}]_{E^{T'}_e}\right)\right) = 0
\]
By applying $\mathrm{Ad}(\mu_e^T)$ to both sides of this equation, we get Equation (\ref{endtestform1}). 

Similarly, the form $[i_{\mathring{p}}^*\hat{\phi}^T]_{F^T} = i_{\mathring{p}}^*\phi$ is well-defined, in the sense that ${\chi_{s,p}^T}^*\tilde{\phi}^T$ does not depend on $s$ or $T$. Below we will use the shorthand $F(s,x) = F^T\circ \psi_p^T(s,x)$. 

By \eqref{endochangeofbasis} and condition 3 of compatible frames, for any $s \in [0,1]$, we have
\[ 
[{\chi_{s,p}^T}^*\hat{\phi}^T]_{E_{p,e}^T} = \mathrm{Ad}\left(\exp\left(s~r_{p,e}^Tw^{n-1}_n\right)A_{p,e}^T\mu_e^T\right)\left([{\chi_{s,p}^T}^*\hat{\phi}^T]_{F(s,\cdot)}\right)
\]
and 
\[[{\chi_{0,p}^T}^*\hat{\phi}^T]_{E_{p,e}^T} = \mathrm{Ad}(A_{p,e}^T\mu_e^T)([{\chi_{0,p}^T}^*\hat{\phi}^T]_{F(0,\cdot)}).\]
Since ${\chi_{s,p}^T}^*\tilde{\phi}^T = {\chi_{0,p}^T}^*\tilde{\phi}^T$ is one of the defining characteristics of a compatible $\mathfrak{so}(n)$-valued test form and $[{\chi_{s,p}^T}^*\hat{\phi}^T]_{F(s,\cdot)} = {\chi_{s,p}^T}^*\tilde{\phi}^T$, the second equation can be substituted into the first to obtain Equation (\ref{endtestform2}).

\end{proof}

A more intuitive way to understand what this lemma gives us is: rather than $\hat{\phi}$ itself being continuous, we get that the matrix representations of $\hat{\phi}$ in special frames adapted to the mesh are only allowed to have discontinuties in a special way, and except for the dependence on $r_{p,e}^T$, the discontinuities do not depend on the frame $f$.

Just as for compatible test forms, an important class of compatible $\mathrm{End}(TM)$-valued test forms is given by the forms $\phi^k_{lij} (f_k \otimes f^*\theta^l) \otimes \star f^*(\theta^i \wedge \theta^j)$, where the coefficients $\phi^k_{lij}$ are continuous compactly supported functions on $M$ which are $C^2$ on the interior of each codimension-0 polytope and vanish on $\partial M$ and which alternate in the $k,l$ indices. These are exactly the images of the forms $\phi_{ij}' \otimes \star f^*(\theta^i \wedge \theta^j)$ under the map $\Psi_f$, where $\phi_{ij}': M \to \mathfrak{so}(n)$ has the same continuity and smoothness as the maps $\phi^k_{lij}$. Unlike the case for compatible $\mathfrak{so}(n)$-valued test forms, the set of compatible $\mathrm{End}(TM)$-valued test forms is in general not a superset of the smooth $\mathrm{End}(TM)$-valued test forms, as discontinuities in $f$ can force discontinuties in $\hat\phi$.

Note that if $\hat \phi \in \mathcal{A}(f,\mathcal{T},M)$, then $\hat\phi \in \mathcal{A}(f \cdot h,\mathcal{T},M)$ for any continuous piecewise-$C^2$ map $h: M \to O(n)$, so the dependence of $\mathcal{A}(f,\mathcal{T},M)$ on the choice of frame is limited to dependence on its discontinuities. Additionally, the validity of equations (\ref{endtestform1}) and (\ref{endtestform2}) does not depend on the choice of tangent frames $E_e$ and $E_p$, because if $\tilde E_e$ is another choice of orthonormal tangent frame for $e$, then $\tilde E_e^T = E_e^T \cdot \begin{bmatrix}h & 0 \\ 0 & 1\end{bmatrix}$ for some orthogonal matrix $h \in O(n-1)$. Therefore, we have on $\Phi^{-1}(\mathring{e})$, 
\begin{align*}
[i_{\mathring{e}}^*\hat{\phi}^T]_{\tilde E_e^T} &= \mathrm{Ad}\left(\begin{bmatrix}h^{-1} & 0 \\ 0 & 1\end{bmatrix}\right)\left([i_{\mathring{e}}^*\hat\phi^T]_{E_e^T}\right) = \mathrm{Ad}\left(\begin{bmatrix}h^{-1} & 0 \\ 0 & 1\end{bmatrix}\begin{bmatrix}I & 0 \\ 0 & -1\end{bmatrix}\right)\left([i_{\mathring{e}}^*\hat\phi^{T'}]_{E_e^{T'}}\right)\\
&= \mathrm{Ad}\left(\begin{bmatrix}h^{-1} & 0 \\ 0 & 1\end{bmatrix}\begin{bmatrix}I & 0 \\ 0 & -1\end{bmatrix}\begin{bmatrix}h & 0 \\ 0 & 1\end{bmatrix}\right)\left([i_{\mathring{e}}^*\hat \phi^{T'}]_{\tilde E_e^{T'}}\right)\\
&= \mathrm{Ad}\left(\begin{bmatrix}I & 0 \\ 0 & -1\end{bmatrix}\right)\left([i_{\mathring{e}}^*\hat\phi^{T'}]_{\tilde E_e^{T'}}\right).
\end{align*}

Similar reasoning shows that if an alternative (similarly oriented) tangential orthonormal frame $\tilde E_p$ is chosen for $p$, then equation (\ref{endtestform2}) is valid for $[{\chi_{s,p}^T}^*\hat\phi^T]_{\tilde E_{p,e}^T}$ as well.

We may also define bijections $\Psi_{E_e^T}: C^0\Omega^k(e;\mathfrak{gl}(n)) \to C^0\Omega^k(e;\mathrm{End}(TM))$ and $\Psi_{E_{p,e}^T}: C^0\Omega^k(p;\mathfrak{gl}(n)) \to C^0\Omega^k(p;\mathrm{End}(TM))$ in exactly the same way as $\Psi_f$, but using the frames $E_e^T$ and $E_{p,e}^T$ defined on $e$ and $p$ respectively.

We can then define the following endomorphism-valued forms, which are a $2$-form defined on the codimension-0 polytope $T$, a pair of $1$-forms defined on the codimension-1 polytope $e = T \cap T'$, and a $0$-form defined on the codimension-2 polytope $p \subset T \cap e$ respectively:
\begin{align*}
\hat{R} &:= \Psi_f(f^*\Omega),\\
\hat{\Two}_e^T &:= \Psi_{E_e^T}(\Two_e^T),\\
\hat{\Two}_e^{T'} &:= \Psi_{E_e^{T'}}(\Two_e^{T'}),\\
\hat{\Theta}_p^T &:= \Psi_{E_{p,e}^T}(\Theta_pw^{n-1}_n) = \Theta_p(-\vec{n} \otimes \vec{\nu}^\flat + \vec{\nu}\otimes\vec{n}^\flat) .
\end{align*}

Note that for consistency, when defining $\hat\Theta_p^T$, we must take care to use the face $e$ such that $E_{p,e}^T$ has $-\vec{n}^T$ as its last vector, so that the rotation from $E_{p,e}^T$ to $E_{p,e'}^T$ ($e'$ being the other codimension-1 face meeting $p$) is in the counterclockwise direction. Otherwise, as mentioned at the end of section \ref{angledefectsection}, the sign of $\Theta_p$ will need to flip.

In coordinates, these forms can explicitly be written as
\begin{align}
\iprod{\hat{R}f_j,f_i}_T &= {f^T}^*\Omega^i_j, \nonumber \\
\iprod{\hat{\Two_e}^T( E_e^T)_j,(E_e^T)_i}_T &= \begin{cases}0 &\text{if } i,j < n ,\\
\iprod{\nabla^T((E_e^T)_j),(E_e^T)_i}_T &\text{if } i = n\;\text{or}\; j = n,\end{cases} \label{TwoEndCoord}\\
\iprod{\hat{\Theta}_p^T(E_{p,e}^T)_j,(E_{p,e}^T)_i}_T &= \begin{cases}\Theta_p &\text{if } i = n, j = n-1,\\
-\Theta_p &\text{if } i = n-1,j = n, \nonumber\\
0 &\text{otherwise.}\end{cases} 
\end{align}

Here $\iprod{\cdot,\cdot}_T$ and $\nabla^T$ refer to the inner product and covariant derivative induced by the metric $g^T$, respectively.

Frame-independent expressions for $\hat{R}$, $\hat{\Two}_e^T$, and $\hat{\Theta}_p^T$ are given in the lemma below.

\begin{lemma}
On each $T \subseteq M$, $\hat{R}$ is the usual Riemann curvature tensor, i.e.~it is the $\operatorname{End}(TM)$-valued 2-form given by
\[
\hat{R}(X,Y)Z = \nabla_X\nabla_YZ - \nabla_Y\nabla_XZ - \nabla_{[X,Y]}Z.
\]
On each $e \subset T$, $\hat{\Two}_e^T$ is given as follows.  For all $x \in \mathring{e}$, $X,Y \in T_x M$, and $Z \in T_x e$, 
\[
\langle \hat{\Two}_e^T(Z)X, Y \rangle_T = \langle \nabla^T_Z \vec{n},Y \rangle_T \langle X, \vec{n} \rangle_T - \langle \nabla^T_Z \vec{n},X \rangle_T \langle Y, \vec{n} \rangle_T.
\]
On each $p \subset T$, $\hat{\Theta}_p^T$ is the $\operatorname{End}(TM)$-valued 0-form given by multiplying the angle defect $\Theta_p$ times the infinitesimal generator of counterclockwise rotation in the plane orthogonal to $p$ in $T$.  
\end{lemma}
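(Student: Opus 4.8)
The plan is to verify each of the three identities by unwinding the definitions of the pointwise isometries $\Psi_f$, $\Psi_{E_e^T}$, and $\Psi_{E_{p,e}^T}$ together with the coordinate expressions for $\hat{R}$, $\hat{\Two}_e^T$, and $\hat{\Theta}_p^T$ recorded just above the lemma, and then carrying out a short computation in the relevant orthonormal basis. The one preliminary observation is that, on $\mathring{T}$, the basis used to build $\Psi_f$ at a point $x$ is the frame itself: since $F^T \circ {\Phi^T}^{-1} = f^T$ on $\mathring{T}$, the isometry $\Psi^T_{{\Phi^T}^{-1}(x)}$ is precisely the one determined by $f^T(x) = (f_1,\dots,f_n)$.

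For $\hat{R}$, I would combine the coordinate expression $\iprod{\hat{R}f_j,f_i}_T = {f^T}^*\Omega^i_j$ with the background identity ${f^T}^*\Omega^i_j(X,Y) = \iprod{R_{X,Y}f_j,f_i}$ from Section~\ref{backgroundsection}, where $R_{X,Y} = \nabla_X\nabla_Y - \nabla_Y\nabla_X - \nabla_{[X,Y]}$. This gives $\iprod{\hat{R}(X,Y)f_j,f_i}_T = \iprod{R_{X,Y}f_j,f_i}_T$ for all $i,j$. Because $\{f_i\}$ is an orthonormal basis and $\iprod{\cdot,\cdot}_T$ is nondegenerate, it follows that $\hat{R}(X,Y) = R_{X,Y}$ as endomorphisms; in particular $\hat{R}$ agrees with the (frame-independent) Riemann tensor, which also confirms that $\hat{R}$ does not actually depend on the choice of $f$.

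For the second fundamental form, I would expand the left-hand side in the orthonormal basis $E_e^T = (E_1,\dots,E_n)$, writing $X = X^jE_j$, $Y = Y^iE_i$, and recalling that $E_n = \vec{n}$. By the coordinate formula~\eqref{TwoEndCoord}, the matrix $[\hat{\Two}_e^T(Z)]_{E_e^T} = \Two_e^T(Z)$ has nonzero entries only in its last row and last column, with $(\Two_e^T(Z))^n_j = \iprod{\nabla^T_Z E_j,\vec{n}}_T$; metric compatibility (differentiating the constant $\iprod{E_j,\vec{n}}_T$) gives $(\Two_e^T(Z))^j_n = \iprod{\nabla^T_Z\vec{n},E_j}_T = -(\Two_e^T(Z))^n_j$. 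Summing the two nonzero strips (their overlap, the $(n,n)$ entry, vanishes, so nothing is double-counted) and using $\iprod{X,\vec{n}}_T = X^n$, $\iprod{Y,\vec{n}}_T = Y^n$, the whole sum collapses to $\iprod{\nabla^T_Z\vec{n},Y}_T\iprod{X,\vec{n}}_T - \iprod{\nabla^T_Z\vec{n},X}_T\iprod{Y,\vec{n}}_T$, which is exactly the claimed bilinear expression.

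For the angle defect, recall that in $E_{p,e}^T$ the $(n-1)$-th vector is the normal $\vec{\nu}$ to $p$ pointing into $e$ and the $n$-th vector is the normal $\vec{n}$ to $e$, so $\mathrm{span}\{E_{n-1},E_n\} = \mathrm{span}\{\vec{\nu},\vec{n}\}$ is the plane orthogonal to $p$ in $T$. Since $w^{n-1}_n$ has entries $-1$ in position $(n-1,n)$ and $+1$ in position $(n,n-1)$, the endomorphism $\Psi_{E_{p,e}^T}(w^{n-1}_n)$ sends $\vec{\nu} \mapsto \vec{n}$ and $\vec{n} \mapsto -\vec{\nu}$ and annihilates the remaining basis vectors; a one-line check on basis vectors shows this equals $\vec{n}\otimes\vec{\nu}^\flat - \vec{\nu}\otimes\vec{n}^\flat$, the infinitesimal generator of counterclockwise rotation in the $(\vec{\nu},\vec{n})$-plane, and multiplying by $\Theta_p$ yields $\hat{\Theta}_p^T$. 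I expect the main obstacle throughout to be bookkeeping rather than conceptual: one must check that the antisymmetry conventions of $w^{n-1}_n$ and of $\Two_e^T$, the ordering of $\vec{\nu}$ and $\vec{n}$ within $E_{p,e}^T$, and the chosen orientation of $p$ all align so that the word ``counterclockwise'' and the signs in the two bilinear formulas come out correctly.
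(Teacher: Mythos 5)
Your proposal is correct and takes essentially the same approach as the paper: the paper likewise treats the formulas for $\hat{R}$ and $\hat{\Theta}_p^T$ as immediate from the definitions, and verifies the $\hat{\Two}_e^T$ identity by the same computation in the orthonormal basis $E_e^T$, using the skew-symmetry $\langle \nabla_Z^T E_n, E_j \rangle_T = -\langle \nabla_Z^T E_j, E_n \rangle_T$ coming from metric compatibility. The only cosmetic differences are that you run the second-fundamental-form computation from the matrix expression~\eqref{TwoEndCoord} toward the bilinear formula rather than the reverse, and you write out the short verifications for $\hat{R}$ and $\hat{\Theta}_p^T$ that the paper leaves implicit.
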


\begin{proof}
The claims about $\hat{R}$ and $\hat{\Theta}_p^T$ are clear from their definitions.  To derive the formula for $\hat{\Two}_e^T$, let $E_i$ be shorthand for $(E_e^T)_i$, and let $X=X^i E_i$, $Y=Y^i E_i$, and $Z=Z^i E_i$.  Assume that $Z$ is tangent to $e$, so that $Z^n=0$.  Then
\begin{align*}
&\langle \nabla^T_Z \vec{n},Y \rangle_T \langle X, \vec{n} \rangle_T - \langle \nabla^T_Z \vec{n},X \rangle_T \langle Y, \vec{n} \rangle_T \\
&= Y^i \langle \nabla_Z^T E_n, E_i \rangle_T X^n - X^j \langle \nabla_Z^T E_n, E_j \rangle_T Y^n \\
&= X^n \langle \nabla_Z^T E_n, E_i \rangle_T Y^i + X^j \langle \nabla_Z^T E_j, E_n \rangle_T Y^n \\
&= \sum_{\substack{i,j=1 \\ i=n \text{ or } j=n}}^n X^j \langle \nabla_Z^T E_j, E_i \rangle_T Y^i.
\end{align*}
Above, we used the fact that $\langle \nabla_Z^T E_n, E_j \rangle_T = -\langle \nabla_Z^T E_j, E_n \rangle_T$.  We get the claimed result upon comparison with~\eqref{TwoEndCoord}.
\end{proof}

If $f$ is any compatible frame and $\hat \phi \in \mathcal{A}(f,\mathcal{T},M)$, then we define
\begin{align*}
\iprod{\iprod{\hat{R}_\mathrm{dist},\hat{\phi}}} &:= \iprod{\iprod{f^*\Omega_\mathrm{dist},\Psi_f^{-1}(\hat\phi)}}.
\end{align*}

The following theorem gives a more explicit formula for $\hat{R}_{\mathrm{dist}}$ and demonstrates that $\hat{R}_{\mathrm{dist}}$ is well-defined.

\begin{theorem} \label{Rdistthrm}
If $\hat \phi \in \mathcal{A}(f,\mathcal{T},M)$ for some compatible frame $f$, then
\begin{align}
\iprod{\iprod{\hat{R}_\mathrm{dist},\hat{\phi}}} =
\sum_{T \subseteq M} \int_{\mathring{T}}\iprod{\hat{R}\wedge \hat{\phi}} - \sum_{\mathring{e} \subset \mathring{M}} \int_{\mathring{e}} \left[\!\left[\iprod{\hat{\Two}_e \wedge i_{\mathring{e}}^*\hat{\phi}}\right]\!\right] + \sum_{\mathring{p} \subset \mathring{M}}\int_{\mathring{p}}\iprod{\hat{\Theta}_p\wedge i_{\mathring{p}}^*\hat{\phi}}.  \label{distcurvendofinal}
\end{align}
Here $\left[\!\left[\iprod{\hat{\Two}_e \wedge i_{\mathring{e}}^*\hat\phi}\right]\!\right]$ is defined as $\iprod{\hat{\Two}_e^T \wedge i_{\mathring{e}}^*\hat\phi|_T} - \iprod{\hat\Two_e^{T'} \wedge i_{\mathring{e}}^*\hat{\phi}|_{T'}}$, which is interpreted as the jump in second fundamental form, while $\iprod{\hat\Theta_p \wedge i_{\mathring{p}}^*\hat\phi}$ is defined as $\iprod{\hat\Theta_p^T \wedge i_{\mathring{p}}^*\hat\phi|_T}$, where $T$ is any of the $n$-dimensional polytopes containing $p$ as a face.
\end{theorem}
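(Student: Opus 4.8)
The plan is to reduce everything to Theorem~\ref{distcurvthrm} through the bijection $\Psi_f$ and then translate each of the three terms of~\eqref{distcurvfinal} into endomorphism language. First I would set $\phi := \Psi_f^{-1}(\hat\phi)$. Since $\hat\phi \in \mathcal{A}(f)$, the discussion preceding Definition~\ref{compatendtestformdef} guarantees that $\phi$ is a compatible $\mathfrak{gl}(n)$-valued test form, so the integration-by-parts derivation of Sections~\ref{integrationbypartssection}--\ref{angledefectsection} applies verbatim and~\eqref{distcurvfinal} holds for this $\phi$. By the definition $\iprod{\iprod{\hat R_\mathrm{dist},\hat\phi}} = \iprod{\iprod{f^*\Omega_\mathrm{dist},\phi}}$, it then remains only to rewrite the right-hand side of~\eqref{distcurvfinal} term by term.

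The single algebraic engine for the translation is that each $\Psi$ map is a pointwise isometry for the trace pairing, so that the $\mathrm{End}(TM)$-valued pairing of two endomorphisms expressed in a \emph{common} frame $F$ equals the $\mathfrak{gl}(n)$-valued pairing of their matrices in $F$; this is combined with the adjoint-invariance identity $\iprod{\mathrm{Ad}(h)(\alpha)\wedge\beta} = \iprod{\alpha \wedge \mathrm{Ad}(h^{-1})(\beta)}$. For the codimension-$0$ term the translation is immediate, since both $f^*\Omega$ and $\phi$ are expressed in the frame $F^T$ under $\Psi_f$, giving $\iprod{{f^T}^*\Omega\wedge\phi} = \iprod{\hat R \wedge \hat\phi}$. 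For the codimension-$1$ term I would use $f^T|_{\mathring e} = E_e^T\cdot\mu_e^T$ to write $i_{\mathring e}^*\phi = \mathrm{Ad}((\mu_e^T)^{-1})(i_{\mathring e}^*[\hat\phi^T]_{E_e^T})$; the factor $\mathrm{Ad}((\mu_e^T)^{-1})$ cancels the one in~\eqref{distcurvfinal} by adjoint-invariance, leaving $\iprod{[\![\Two_e]\!] \wedge i_{\mathring e}^*[\hat\phi^T]_{E_e^T}}$. Splitting $[\![\Two_e]\!] = \Two_e^T + \Two_e^{T'}$ and converting the $\Two_e^{T'}$ piece into the $E_e^{T'}$ frame via~\eqref{endtestform1} introduces a conjugation by $S:=\mathrm{diag}(I,-1)$; because $\Two_e^{T'}$ occupies only the last row and column, $\mathrm{Ad}(S)(\Two_e^{T'}) = -\Two_e^{T'}$, and this is exactly what turns the sum $\Two_e^T+\Two_e^{T'}$ into the jump $\iprod{\hat\Two_e^T\wedge i_{\mathring e}^*\hat\phi^T} - \iprod{\hat\Two_e^{T'}\wedge i_{\mathring e}^*\hat\phi^{T'}}$. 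The codimension-$2$ term is handled identically: using $F(0,\cdot) = E_{p,e}^T\cdot A_{p,e}^T\mu_e^T$ together with the fact that~\eqref{endtestform2} makes the $F$-representation of $\hat\phi$ constant along the blow-up fiber, one gets $i_{\mathring p}^*\phi = \mathrm{Ad}((A_{p,e}^T\mu_e^T)^{-1})(i_{\mathring p}^*[\hat\phi^T]_{E_{p,e}^T})$, whose $\mathrm{Ad}$ factor cancels the one multiplying $w^{n-1}_n$ to yield $\Theta_p\iprod{w^{n-1}_n \wedge i_{\mathring p}^*[\hat\phi^T]_{E_{p,e}^T}} = \iprod{\hat\Theta_p^T \wedge i_{\mathring p}^*\hat\phi^T}$.

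I expect the main obstacle to be bookkeeping rather than anything conceptual: keeping straight which frame ($F^T$, $E_e^T$, or $E_{p,e}^T$) each isometry is built from, and checking that every stray $\mathrm{Ad}$ factor in~\eqref{distcurvfinal} is absorbed by exactly the change-of-basis relation dictated by the definitions of $\hat\Two_e^T$ and $\hat\Theta_p^T$. The most delicate point is the sign in the codimension-$1$ jump: one must verify that conjugation by $S$ negates $\Two_e^{T'}$ and that this is precisely the mechanism converting the symmetric sum into the oppositely-signed jump bracket, so that the overall minus sign in front of the edge sum in~\eqref{distcurvfinal} is reproduced with the correct internal sign in~\eqref{distcurvendofinal}. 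Finally, since the right-hand side of~\eqref{distcurvendofinal} is written entirely through the intrinsic quantities $\hat R$, $\hat\Two_e^T$, $\hat\Theta_p^T$ and the test form $\hat\phi$, with no reference to the frame $f$, establishing the identity simultaneously shows that $\iprod{\iprod{\hat R_\mathrm{dist},\hat\phi}}$ is independent of the compatible frame used to define it, which is the well-definedness asserted just before the theorem.
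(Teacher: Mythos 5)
Your proposal is correct and follows essentially the same route as the paper's proof: set $\phi = \Psi_f^{-1}(\hat\phi)$, apply Theorem~\ref{distcurvthrm} to this compatible $\mathfrak{gl}(n)$-valued test form, and convert each term of \eqref{distcurvfinal} using the pointwise isometries $\Psi_f$, $\Psi_{E_e^T}$, $\Psi_{E_{p,e}^T}$ together with the invariance $\iprod{\alpha \wedge \mathrm{Ad}(h)(\beta)} = \iprod{\mathrm{Ad}(h^{-1})(\alpha) \wedge \beta}$. Your treatment of the delicate codimension-1 sign---invoking \eqref{endtestform1} and $\mathrm{Ad}\left(\begin{bmatrix}I & 0 \\ 0 & -1\end{bmatrix}\right)(\Two_e^{T'}) = -\Two_e^{T'}$ to turn the sum $[\![\Two_e]\!] = \Two_e^T + \Two_e^{T'}$ into the oppositely-signed jump bracket---is precisely the ``only complication'' the paper resolves by the same mechanism.
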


\begin{proof}
This follows fairly simply from the definitions (\ref{TwoEndCoord}) and the change of basis formula (\ref{endochangeofbasis}). Let $\phi = \Psi_f^{-1}(\hat\phi)$, which is a compatible test form. Then:
\begin{align*}
\iprod{\hat{R} \wedge \hat\phi} &= \iprod{\Psi_f^{-1}(\hat{R})\wedge \Psi_f^{-1}(\hat{\phi})} = \iprod{{f^T}^*\Omega \wedge \phi}\\
\iprod{\hat\Two_e^T \wedge i_{\mathring{e}}^*\hat\phi|_T} &= \iprod{\Psi_{E_e^T}^{-1}(\hat\Two_e^T) \wedge \Psi_{E_e^T}^{-1}(i_{\mathring{e}}^*\hat\phi|_T)} = \iprod{\Two_e^T \wedge \mathrm{Ad}(\mu_e^T)(i_{\mathring{e}}^*\phi)}\\
\iprod{\hat\Theta_p^T \wedge i_{\mathring{p}}^*\hat\phi|_T} &= \iprod{\Psi_{E_{p,e}^T}^{-1}(\hat\Theta_p^T) \wedge \Psi_{E_{p,e}^T}^{-1}(i_{\mathring{p}}^*\hat\phi|_T)} = \iprod{\Theta_pw^{n-1}_n \wedge \mathrm{Ad}(A_{p,e}^T\mu_e^T)(i_{\mathring{p}}^*\phi)}.
\end{align*}

From here, we can apply the identity $\iprod{\alpha \wedge \mathrm{Ad}(h)(\beta)} = \iprod{\mathrm{Ad}(h^{-1})(\alpha) \wedge \beta}$ which is valid for $\mathfrak{so}(n)$-valued forms. The only complication arises from the jump in second fundamental form. Note that because $\mu_e^{T'} = \begin{bmatrix}I & 0 \\0 & -1\end{bmatrix}\mu_e^T$, we have
\begin{align*}\iprod{\mathrm{Ad}((\mu_e^T)^{-1})(\Two_e^T) \wedge i_{\mathring{e}}^*\phi} -& \iprod{\mathrm{Ad}((\mu_e^{T'})^{-1})(\Two_e^{T'})\wedge i_{\mathring{e}}^*\phi} \\
&= \iprod{\mathrm{Ad}((\mu_e^T)^{-1})(\Two_e^T) - \mathrm{Ad}((\mu_e^T)^{-1})(-\Two_e^{T'}) \wedge i_{\mathring{e}}^*\phi}\\
&= \iprod{\mathrm{Ad}((\mu_e^T)^{-1})([\![\Two_e]\!]) \wedge i_{\mathring{e}}^*\phi}
\end{align*}
\end{proof}

Note that, since $i_{\mathring{p}}^*\hat{\phi}|_T$ is not single-valued due to \eqref{endtestform2}, at first glance the last term seems to have some ambiguity. However, $\hat{\Theta}_p^T$ is invariant under rotations of the normal vectors $\vec{\nu}$ and $\vec{n}$ with which it is defined. More precisely, if $h = \exp\left(s~r_{p,e}^T(x)w^{n-1}_n\right)$, then $[\hat{\Theta}_p^T]_{E_{p,e}^T} = [\hat{\Theta}_p^T]_{E_{p,e}^T \cdot h^{-1}}$. So, because $[{\chi_{s,p}^T}^*\hat{\phi}^T]_{E_{p,e}^T} = \mathrm{Ad}(h)([{\chi_{0,p}^T}^*\hat{\phi}^T]_{E_{p,e}^T})$ by \eqref{endtestform2} and $\Psi_{E_{p,e}^T}$ is an isometry, we have
\begin{align*}
\iprod{\hat\Theta_p^T \wedge{\chi_{s,p}^T}^*\hat{\phi}^T}& = \iprod{[\hat{\Theta}_p^T]_{E_{p,e}^T} \wedge [{\chi_{s,p}^T}^*\hat{\phi}^T]_{E_{p,e}^T}}\\
&= \iprod{[\hat{\Theta}_p^T]_{E_{p,e}^T} \wedge \mathrm{Ad}(h)([{\chi_{0,p}^T}^*\hat{\phi}^T]_{E_{p,e}^T})} \\
&= \iprod{\mathrm{Ad}(h^{-1})([\hat{\Theta}_p^T]_{E_{p,e}^T}) \wedge [{\chi_{0,p}^T}^*\hat{\phi}^T]_{E_{p,e}^T}}\\
&= \iprod{[\hat{\Theta}_p^T]_{E_{p,e}^T \cdot h^{-1}} \wedge [{\chi_{0,p}^T}^*\hat{\phi}^T]_{E_{p,e}^T}}\\
&= \iprod{[\hat{\Theta}_p^T]_{E_{p,e}^T} \wedge [{\chi_{0,p}^T}^*\hat{\phi}^T]_{E_{p,e}^T}} \\
&= \iprod{\hat{\Theta}_p^T \wedge {\chi_{0,p}^T}^*\hat{\phi}^T}.
\end{align*}
So, in~\eqref{distcurvendofinal}, ``$i_{\mathring{p}}^*\hat{\phi}|_T$" could be thought of as a shorthand for ${\chi_{s,p}^T}^*\hat{\phi}^T$, where $s$ may be any convenient number in $[0,1]$. Concretely, this means each $x \in \mathring{p}$ can be approached along a ray of any convenient angle, and the result will be the same. Additionally, due to equation (\ref{angledefectpart2}), this term does not depend on the choice of $T$ of which $p$ is a face.

The right side of equation \eqref{distcurvendofinal} does not depend on $f$ at all, so as long as $\hat{\phi} \in \mathcal{A}(f,\mathcal{T},M)$ for some compatible frame $f$, this definition can be used to compute the distributional curvature. The caveats on the integrals are the same as for \eqref{distcurvfinal}: the integrals over $\mathring{e}$ do not depend on the choice of $T$, but care must be chosen so that the integral is evaluated with the orientation induced on $e$ by the orientation of $T$, and the integrals over $\mathring{p}$ must be evaluated using the orientation induced from the face $e$ such that $E_{p,e}^T$ has an inward-pointing normal vector as its last entry.

This expression has some close similarities with the \emph{densitized distributional curvature} investigated in \cite{gopalakrishnan2023analysis}. In fact, the two expressions are equivalent, with the main difference being the choice of how to represent the second fundamental form and which indices are raised/lowered. See Appendix \ref{equivalenceappendix} for a proof. As explained in~\cite{gopalakrishnan2023analysis}, various traces of this distribution can be taken to obtain the Ricci curvature, Einstein tensor, and the scalar curvature.

\section{Construction of Compatible Frames}\label{constructingcompatibleframessection}

Most of Section \ref{distcurvsection} would be meaningless if a compatible frame did not exist. In this section, we will give a proof of the following theorem:

\begin{theorem}\label{compatframeexistencethrm} Let $g$ be a Regge metric for the mesh $\mathcal{T}$ such that each polytope $T \subseteq M$ has a blow-up which is a closed convex polytope in $\mathbb{R}^n$, and suppose there exists a $C^2$ homotopy of Regge metrics $g(t)$ such that $g(1) = g$, $g(0) =: g_0$ is a smooth metric, and there exists a smooth $g_0$-orthonormal frame $f_0$. Then there exists a $C^2$ homotopy of frames $f(t)$ such that $f(0) = f_0$, $f^T(t)$ is $g(t)$-orthonormal when restricted to each $T$, $f(t)$ satisfies conditions 1, 2, and 3 of compatible frames, and the maps $(t,x) \mapsto F^T(t)(x)$ vary continuously as maps $[0,1] \times B_T \to \mathcal{F}_{GL}(T)$, where $F^T$ is the blown-up frame from condition 1.
\end{theorem}

The general proof strategy is as follows: For each codimension-2 polytope $p$, we evolve the frame $E_p$ into a $g(t)$-orthonormal frame $E_p(t)$.  Then, for each codimension-1 polytope $e \supset p$, we produce a frame $E_e(t)$ whose associated matrix $A_{p,e}$ is constant in time. Then, on each codimension-0 polytope $T$, we use Lemma \ref{frameextthrm} (appearing below) to produce homotopies of orthonormal frames $F^T(t): B_T \to \mathcal{F}_O(T)$ having the same matrices $\mu_e^T$ as $f_0$ does, and $r_{p,e}^T(t) = \theta^T_p(t) - \theta^T_p(0)$. The frames $f^T(t) = F^T(t) \circ {\Phi^T}^{-1}$ are then compatible by construction. 

First, we need a result that allows us to extend frames. This is a technical result that ultimately relies on the fact that functions can be extended smoothly from closed sets. We will specifically use the theorem as stated in \cite{Whitney2} (see also~\cite[Lemma 2.26]{lee2013smooth}), which can be simplified for our purposes in the way detailed below.  In the remainder of this section, we will stop abusing terminology and use the word ``polytope'' to refer to a genuine polytope (as opposed to the image of a polytope under a smooth embedding).  Our reason for doing so is that the extension procedure that we will soon describe takes place on the blow-up $B_T$, which we will assume to be a polytope.  (In fact we will assume it to be convex.)  We will use the notation $\hat{T}$ for polytopes below; ultimately we will take $\hat{T}=B_T$ when we begin constructing a compatible frame.

\begin{theorem}[\cite{Whitney2}] \label{smoothextthrm}
Let $\hat{T} \subset \mathbb{R}^n$ be a closed, convex polytope. If $a: \hat{T} \to \mathbb{R}$ is a $C^r$ function, $0 \le r \le \infty$, then there exists a $C^r$ extension $A: \mathbb{R}^n \to \mathbb{R}$ such that $A|_{\hat{T}} = a$.
\end{theorem}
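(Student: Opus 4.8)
The plan is to deduce this from the general Whitney jet extension theorem, which is the actual content of \cite{Whitney2}: a \emph{Whitney jet} of class $C^r$ on a closed set $K \subset \mathbb{R}^n$ --- that is, a family $\{a_\alpha\}_{|\alpha| \le r}$ of continuous functions on $K$ whose formal Taylor remainders vanish to the appropriate order --- extends to a genuine $C^r$ function $A$ on $\mathbb{R}^n$ with $D^\alpha A|_K = a_\alpha$. Given our honest $C^r$ function $a$ on $\hat T$, the job is therefore to manufacture a Whitney jet out of it and check the compatibility conditions. Because $\hat T$ is convex with nonempty interior, every partial derivative $D^\alpha a$ (for $|\alpha| \le r$) exists on the interior and, since $a$ is $C^r$ on $\hat T$, extends continuously to all of $\hat T$; I take $a_\alpha := D^\alpha a$ to be these continuous extensions. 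Convexity guarantees, via the fundamental theorem of calculus along segments, that the boundary values of $a_\alpha$ agree with the one-sided derivatives of $a$, so the jet is unambiguous.

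Next I would verify the Whitney compatibility conditions. For each multi-index $\alpha$ with $|\alpha| \le r$ and each pair $x,y \in \hat T$, set
\[
R_\alpha(x,y) := a_\alpha(x) - \sum_{|\beta| \le r - |\alpha|} \frac{(x-y)^\beta}{\beta!}\, a_{\alpha+\beta}(y),
\]
and I must show $R_\alpha(x,y) = o\!\left(|x-y|^{\,r-|\alpha|}\right)$ as $|x-y| \to 0$, uniformly over $\hat T$. This is where convexity is indispensable: the straight segment $[y,x]$ lies entirely in $\hat T$, so Taylor's theorem with the integral form of the remainder applies to $a$ along this segment, expressing $R_\alpha(x,y)$ in terms of integrals of the increments $D^{\alpha+\beta}a(y+t(x-y)) - D^{\alpha+\beta}a(y)$ against $(x-y)^\beta$ with $|\beta| = r - |\alpha|$. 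Since $\hat T$ is compact, each top-order derivative $a_{\alpha+\beta}$ is uniformly continuous there, which converts the pointwise Taylor estimate into the required uniform $o$-estimate.

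With the jet in hand and the conditions verified, the Whitney extension theorem produces a $C^r$ function $A$ on $\mathbb{R}^n$ with $A|_{\hat T} = a_0 = a$, as desired; the case $r=\infty$ is covered because the jet extension theorem applies verbatim to $C^\infty$ jets. It is worth noting that the extremes of the range bracket familiar statements: $r=0$ reduces to the Tietze extension theorem (the jet is just $a$ itself and the single condition is continuity), while $r \ge 1$ is the substantive Whitney case. The main obstacle is precisely the uniform remainder estimate of the second paragraph; without convexity of $\hat T$, a naively $C^r$ function need not define a Whitney jet, because the segment joining two points of the set may leave the set and the Taylor remainder can then fail to decay. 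Convexity of the polytope, together with its compactness, is exactly what rescues the argument.
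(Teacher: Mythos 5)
The paper offers no proof of this statement at all: it is imported verbatim from \cite{Whitney2} (with \cite[Lemma 2.26]{lee2013smooth} given as an alternative source), so there is no in-paper argument to compare against. Your derivation---build the Whitney jet $\{a_\alpha\}_{|\alpha| \le r}$ from the continuous extensions of the interior derivatives, verify the Taylor-remainder compatibility conditions using convexity and compactness, then invoke the jet extension theorem---is exactly the standard bridge from the jet formulation (the actual content of the cited reference) to the statement as quoted, and it is correct in substance.

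Two points need patching. First, when $x$ and $y$ both lie in $\partial \hat{T}$, the segment $[y,x]$ may lie entirely in a face, where $a$ is not literally differentiable as a function of the segment parameter, so Taylor's theorem with integral remainder does not apply directly as you assert. The fix is an interior approximation: pick an interior point $z$, apply your estimate to $x_t = (1-t)x + tz$ and $y_t = (1-t)y + tz$ for $t \in (0,1]$, whose joining segments are interior, and let $t \to 0$ using continuity of the $a_\alpha$ on the compact set $\hat{T}$; the uniform $o$-estimate survives the limit. Second, and more consequentially for how the theorem is used downstream: you assume $\hat{T}$ has nonempty interior, but the statement allows lower-dimensional polytopes, and the paper in fact applies the theorem (in the proof of Lemma \ref{smoothextthrm2}) to the codimension-1 faces $\hat{e}_i \subset \mathbb{R}^n$. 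There your jet construction collapses, since the interior of $\hat{e}_i$ in $\mathbb{R}^n$ is empty and the normal derivatives are not determined by $a$. The repair is routine: run your argument inside the affine hull $\mathrm{aff}(\hat{T}) \cong \mathbb{R}^k$ to obtain an extension $\tilde{A}$ defined on the hull, then set $A = \tilde{A} \circ \pi$, where $\pi : \mathbb{R}^n \to \mathrm{aff}(\hat{T})$ is the orthogonal projection, which is smooth (indeed affine). With those two amendments your proof is complete and faithful to the cited source.
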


We will use this as the base of a lemma allowing us to extend partially defined functions from the boundary of a polytope to the interior.

\begin{lemma} \label{smoothextthrm2}
Let $\hat{T} \subset \mathbb{R}^n$ be a closed, convex $n$-dimensional polytope. Denote by $\{\hat{e}_i\}_{i = 1}^N$ a set of codimension-1 faces of $\hat{T}$, and let $\hat{p}_{ij} := \hat{e}_i \cap \hat{e}_j$. Then for any collection of $C^r$ ($r \ge 1$) functions  $a^i: \hat{e}_i \to \mathbb{R}$ such that $a^i|_{\hat{p}_{ij}} = a^j|_{\hat{p}_{ij}}$ for all $i,j$, there exists a Lipschitz continuous function $A: \hat{T} \to \mathbb{R}$ such that $A|_{\hat{e}_i} = a^i$ for each $i$ and $A|_{\hat{T} \backslash \bigcup_{i,j} \hat{p}_{ij}}$ is a $C^r$ function.
\end{lemma}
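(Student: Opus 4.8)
The plan is to construct $A$ as a weighted average of global $C^r$ extensions of the individual boundary data $a^i$, where the weights are rational functions of the affine functions that cut out the facets $\hat{e}_i$. First I extend each datum: since each facet $\hat{e}_i$ is a closed convex polytope lying in a supporting hyperplane $H_i \cong \mathbb{R}^{n-1}$, Theorem~\ref{smoothextthrm} (applied inside $H_i$ and then precomposed with orthogonal projection onto $H_i$) produces a $C^r$ function $A^i : \mathbb{R}^n \to \mathbb{R}$ with $A^i|_{\hat{e}_i} = a^i$. The compatibility hypothesis $a^i|_{\hat{p}_{ij}} = a^j|_{\hat{p}_{ij}}$ then guarantees that $A^i = A^j$ on $\hat{p}_{ij}$, which is the fact that will rescue continuity at the codimension-2 faces.

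For each selected facet, choose an affine function $\ell_i$ with $\{\ell_i = 0\} = H_i$ and $\ell_i > 0$ on the interior of $\hat{T}$, so that $\ell_i \ge 0$ on $\hat{T}$ and $\hat{e}_i = \hat{T} \cap \{\ell_i = 0\}$. Define
\[
w_i := \frac{\prod_{j \ne i} \ell_j^2}{\sum_{k=1}^N \prod_{j \ne k} \ell_j^2}, \qquad A := \sum_{i=1}^N w_i A^i.
\]
The denominator is a sum of nonnegative terms that vanishes exactly where at least two of the $\ell_i$ vanish, i.e.~precisely on $\bigcup_{i,j} \hat{p}_{ij}$; off that set $\sum_i w_i = 1$ and each $w_i$ is smooth, so $A$ is $C^r$ on $\hat{T} \setminus \bigcup_{i,j} \hat{p}_{ij}$. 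On $\hat{e}_i$, every numerator with $k \ne i$ carries the factor $\ell_i^2 = 0$, so $w_i = 1$ and $w_k = 0$ for $k \ne i$, giving $A = A^i = a^i$ there (away from the $\hat{p}$'s, and then on all of $\hat{e}_i$ by the continuity established below).

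It remains to show that $A$ extends continuously across $\bigcup_{i,j} \hat{p}_{ij}$ and is globally Lipschitz; this is the only real content. Fix a face $\hat{Q}$ of $\hat{T}$ of codimension $m \ge 2$, cut out by exactly the selected facets $\hat{e}_{i_1}, \dots, \hat{e}_{i_m}$. Since $\hat{Q} \subseteq \hat{p}_{i_s i_{s'}}$ for all $s, s'$, the extensions $A^{i_1}, \dots, A^{i_m}$ share a common value $c(\bar{x})$ at each $\bar{x} \in \hat{Q}$. Near the relative interior of $\hat{Q}$, using the $\ell_{i_s}$ as transverse coordinates, one checks that (a) the weights $w_i$ with $i \notin \{i_1, \dots, i_m\}$ vanish to high order and have bounded gradient, while (b) each $w_{i_s}$ is, to leading order, homogeneous of degree $0$ in $(\ell_{i_1}, \dots, \ell_{i_m})$, so that $|\nabla w_{i_s}(x)| \le C/\mathrm{dist}(x, \hat{Q})$. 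Writing $\bar{x}$ for the nearest point of $\hat{Q}$ to $x$ and using $A^{i_s}(\bar{x}) = c(\bar{x})$ with the local Lipschitz continuity of $A^{i_s}$ gives $|A^{i_s}(x) - c(\bar{x})| \le L\,\mathrm{dist}(x, \hat{Q})$. Since $\sum_i \nabla w_i = 0$ off the bad set, I may subtract the constant $c(\bar{x})$ and estimate
\[
|\nabla A(x)| \le \Big| \sum_i (\nabla w_i)\big(A^i - c(\bar{x})\big) \Big| + \Big| \sum_i w_i \nabla A^i \Big| \le m\,CL + C',
\]
the singular factors $1/\mathrm{dist}$ cancelling against the vanishing factors $\mathrm{dist}$. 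This simultaneously shows $A(x) \to c(\bar{x})$ as $x \to \hat{Q}$ (continuity) and that $\nabla A$ is bounded near $\hat{Q}$. Covering the compact set $\hat{T}$ by finitely many such face-neighborhoods gives a uniform bound $|\nabla A| \le C$ on $\hat{T} \setminus \bigcup_{i,j} \hat{p}_{ij}$, and since this exceptional set has codimension $\ge 2$ (hence measure zero and non-separating in the convex domain $\hat{T}$), $A$ is globally Lipschitz with constant $C$.

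The main obstacle is exactly this gradient estimate near the higher-codimension faces. The weights $w_i$ are singular along the \emph{entire} set $\bigcup_{i,j} \hat{p}_{ij}$, whereas each difference $A^i - A^j$ vanishes only on the \emph{single} face $\hat{p}_{ij}$, so one must pair each singular weight $w_{i_s}$ with a factor $A^{i_s} - c$ that decays at precisely the matching rate, and verify the homogeneity bounds $|\nabla w_{i_s}| \le C/\mathrm{dist}(\,\cdot\,, \hat{Q})$ uniformly as one passes to deeper strata (faces of codimension $> m$ contained in $\overline{\hat{Q}}$). Organizing this bookkeeping stratum by stratum, with the common value $c$ supplied by the compatibility hypothesis, is the crux; the extension step and the claim that $A$ is $C^r$ off $\bigcup_{i,j}\hat{p}_{ij}$ are routine.
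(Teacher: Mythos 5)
Your construction is, up to immaterial details, the same as the paper's (the paper takes $A = \sum_i A^i\hat{\Lambda}_i/\sum_i\hat{\Lambda}_i$ with $\hat{\Lambda}_i = \prod_{j\neq i}\lambda^j$ the unsquared signed distances to the facet hyperplanes), so everything rests on your gradient estimate, and that is where there is a genuine gap. Your premises (a) and (b), combined with subtracting the single reference value $c(\bar{x})$, do not yield the displayed bound: (b) is false near faces where three or more of the selected hyperplanes meet. Concretely, model $\hat{T}$ locally by the octant $\{x,y,z \ge 0\}$ with selected facets $\{x=0\},\{y=0\},\{z=0\}$ and $\hat{Q}$ the origin, and evaluate at $x_\epsilon = (\epsilon^2,\epsilon^2,\epsilon)$. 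Your weight $w_1 = y^2z^2/(y^2z^2+x^2z^2+x^2y^2)$ satisfies $|\partial_x w_1| \sim \epsilon^{-2}$ there, while $\mathrm{dist}(x_\epsilon,\hat{Q}) \sim \epsilon$, so $|\nabla w_1| \le C/\mathrm{dist}(x,\hat{Q})$ fails. The homogeneity argument breaks because the inference ``degree-$0$ homogeneous $\Rightarrow |\nabla f(u)| \le C/|u|$'' needs $f$ to be smooth on the closed unit-sphere sector, whereas for $m \ge 3$ the restricted weight is itself singular where two transverse coordinates vanish (e.g.\ $w_1$ is undefined along the direction $(0,0,1)$). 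Correspondingly the individual term $|\nabla w_1|\,|A^1 - c(\bar{x})| \sim \epsilon^{-2}\cdot\epsilon = \epsilon^{-1}$ is unbounded: the single value $c(\bar{x})$ supplies decay only at the isotropic rate $\mathrm{dist}(x,\hat{Q})$, which is the wrong rate in anisotropic approach regions. You yourself flag the uniformity ``as one passes to deeper strata'' as the crux and leave it as bookkeeping, but organized this way the bookkeeping cannot be completed.

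The cancellation that actually works is pairwise, not stratum-by-stratum: use $\sum_i \nabla w_i = 0$ to rewrite $\sum_i (\nabla w_i)A^i$ in terms of the differences $A^i - A^j$, and prove the two-sided vanishing estimate $|A^i(x) - A^j(x)| \le M(\lambda^i(x)+\lambda^j(x))$ valid on all of $\hat{T}$. This requires a geometric lemma your proposal never establishes, namely $\mathrm{dist}(x,\hat{p}_{ij}) \le C(\lambda^i(x)+\lambda^j(x))$, which the paper proves via a nearest-point argument controlled by the largest interior angle of $\hat{T}$ (and which implicitly needs $\hat{p}_{ij} \neq \emptyset$). In the example above this repairs the estimate: $|\partial_x w_1|\,|A^1 - A^2| \lesssim \epsilon^{-2}\cdot\epsilon^2 = O(1)$, since $A^1 - A^2$ vanishes on $\{x=y=0\}$. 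The paper then closes the argument with the algebraic identity $\hat{\Lambda}_j\hat{\Lambda}_{il}(\lambda^i+\lambda^j) = \hat{\Lambda}_l(\hat{\Lambda}_i+\hat{\Lambda}_j)$, which cancels the singular denominators globally, uniformly over all strata at once, with no stratification or covering argument needed. A smaller issue: you identify a codimension-$m$ face with one lying in exactly $m$ selected facets; for non-simple polytopes a codimension-$m$ face can lie in more than $m$ facets, so your stratification labels would also need repair. Your extension step, the $C^r$ claim off $\bigcup_{i,j}\hat{p}_{ij}$, and the final step ``bounded gradient off a closed non-separating codimension-$2$ set implies Lipschitz'' are all fine.
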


Note that if $\hat{T}$ happens to be a manifold with corners, the Whitney extension theorem can be used directly to produce an extension $A$ which is globally $C^r$.

\begin{proof}
    Per Theorem \ref{smoothextthrm}, each $a^i$ can be extended to a $C^r$ function $A^i: \mathbb{R}^n \to \mathbb{R}$. Since $\hat{e}_i$ is a convex polytope of dimension $n-1$, it lies completely in a hyperplane $E_i \subset \mathbb{R}^n$. Set $\lambda^i(x) := \pm\mathrm{dist}(x,E_i)$, where the sign is positive if $x$ is on the same side of $E_i$ as $\hat{T}$ and negative otherwise. This makes it an affine function.
    
    Now let $\hat{\Lambda}_i(x) = \prod_{j \neq i} \lambda^j(x)$ and $\hat{\Lambda}_{ij}(x) = \prod_{k \notin \{i,j\}} \lambda^k(x)$.  The extension we seek is
    \[
    A(x) = \frac{\sum_i A^i(x) \hat{\Lambda}_i(x)}{\sum_i \hat{\Lambda}_i(x)}.
    \]
    This function satisfies $A|_{\hat{e}_i} = a^i$, and $A$ is $C^r$ on the set such that all of the $\lambda^i$'s are non-negative and no more than one of the $\lambda^i$'s is equal to zero, which is exactly the set $\hat{T}\backslash \bigcup_{i,j} \hat{p}_{ij}$.

    All that remains is to prove that the first partial derivatives of $A$ are bounded if $a^i|_{\hat{p}_{ij}} = a^j|_{\hat{p}_{ij}}$ for all $i,j$.  To do this, we compute (abandoning the Einstein summation convention)
    \begin{align*}
    dA &= \frac{\sum_i A^i \sum_{j \neq i} \hat{\Lambda}_{ij} d\lambda^j \left(\sum_i \hat{\Lambda}_i\right) - \left( \sum_i A^i \hat{\Lambda}_i \right)\left( \sum_i \sum_{j \neq i} \hat{\Lambda}_{ij} d\lambda^j \right)}{\left(\sum_i \hat{\Lambda}_i\right)^2} + \frac{\sum_i \hat{\Lambda}_i dA^i}{\sum_i \hat{\Lambda}_i}\\
    &= \frac{\sum_{i,j} \sum_{l \ne i}(A^i - A^j)\hat{\Lambda}_j\hat{\Lambda}_{il}d\lambda^l}{\left(\sum_i \hat{\Lambda}_i\right)^2} + \frac{\sum_i \hat{\Lambda}_idA^i}{\sum_i \hat{\Lambda}_i}.
    \end{align*}

    Next note that for any $x \in \hat{T}$, and any pair of indices $i,j$, there exists a nearest point $x_0 \in \hat p_{ij}$ to $x$. The difference $x - x_0$ has a component $\vec{u}$ which is normal to $T\hat p_{ij}$ and a tangential component $\vec{v}$. Because $\hat p_{ij}$ is a closed submanifold, the closest point $x_0$ has the property that either $\vec{v} = 0$ or $x_0$ lies on the boundary of $\hat p_{ij}$. In the latter case the angle between $\vec{u}$ and $x - x_0$ is less than $\gamma - \frac{\pi}{2}$, where $\gamma < \pi$ is the largest interior angle of the polytope $\hat{T}$, and thus $\|\vec{v}\| \le \tan(\gamma - \frac{\pi}{2})\|\vec{u}\|$  (see Figure \ref{fig:placeholder} for a diagram). 
    \usetikzlibrary{angles,quotes}
    \begin{figure}
        \centering
        \begin{tikzpicture}
        \coordinate (A) at (-5,0);
        \coordinate (x0) at (0,0);
        \coordinate (C) at (5,-2.5);
        \coordinate (D) at (0,-2);
        \coordinate (x) at (2,-2);

        \draw (A) -- node [anchor=south] {$\hat p_{ij}$} (x0) node [anchor=south] {$x_0$};
        \draw (x0) -- (C);
        \draw [-{Stealth[scale=1]}] (x0) -- node [anchor=east] {$u$} (D);
        \draw [-{Stealth[scale=1]}] (D) -- node [anchor=north] {$v$} (x) node [anchor = north] {$x$};
        \draw (x0) -- (x);
        \draw (-0.3,0) node [anchor = north, inner sep=8pt] {$\gamma$} arc [start angle=180, end angle=333, radius = 0.3];
        \draw (0,-0.5) node [anchor = north west, inner sep=2pt] {$\theta$} arc [start angle=270, end angle = 315, radius = 0.5];
        
        \end{tikzpicture}
        \caption{A diagram of a possible configuration between $x$, a point on the interior of $\hat{T}$, and $x_0$, the nearest point in $\hat{p}_{ij}$ to $x$. The diagram pictured can be imagined as lying in the intersection of $\hat{T}$ with the plane containing $x, x_0$, and $x_0 + u$.}
        \label{fig:placeholder}
    \end{figure}
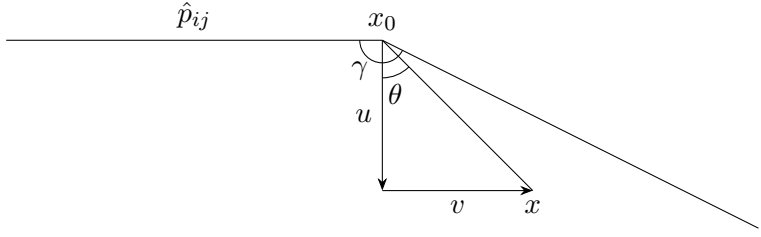
    
    In both cases, $\|x - x_0\|$ is bounded by a constant multiple of $\|\vec{u}\|$, which is in turn bounded by a constant multiple of $|\iprod{\vec{n}_i,\vec{u}}| + |\iprod{\vec{n}_j,\vec{u}}|$, since $(\vec{n}_i,\vec{n}_j)$ are linearly independent and normal to $T\hat p_{ij}$. $\lambda^i(x)$ is exactly equal to $-\iprod{\vec{n}_i,\vec{u}}$ since $x_0 \in e_i$, and similarly for $\lambda^j$. Therefore there is a constant $C$ such that $\|x - x_0\| \le C(\lambda^i(x) + \lambda^j(x))$.

    By the Taylor theorem, since $A^i(x_0) = A^j(x_0)$ and $A^i,A^j$ are both $C^r$ and bounded on $\hat{T}$, this implies there exists a constant $M$ such that $|A^i(x) - A^j(x)| \le M(\lambda^i(x) + \lambda^j(x))$.

    We can use this, together with the fact that $\|dA^i\|$ is bounded above by some constant $M'$ on $\hat{T}$ and $\|d\lambda^l\| = 1$, to bound $\|dA\|$:

    \begin{align*}\|dA\| &\le \frac{\sum_{i,j}\sum_{l \ne i}|A^i - A^j|\hat\Lambda_j\hat\Lambda_{il}\|d\lambda^l\|}{\left(\sum_j\hat\Lambda_j\right)^2} + \frac{\sum_i\hat\Lambda_i\|dA^i\|}{\sum_j\hat\Lambda_j}\\
    &\le M\frac{\sum_{i,j}\sum_{l \ne i}(\lambda^i + \lambda^j)\hat\Lambda_j\hat\Lambda_{il}}{\left(\sum_j\hat\Lambda_j\right)^2} + M'\frac{\sum_i\hat\Lambda_i}{\sum_j\hat\Lambda_j}.
    \end{align*}

    Now we can use the fact that $\hat\Lambda_j\hat\Lambda_{il}(\lambda^i + \lambda^j) = \hat\Lambda_l(\hat\Lambda_i + \hat\Lambda_j)$, so

    \[\|dA\| \le M\frac{\sum_{i,j}\sum_{l \ne i}\hat\Lambda_l(\hat\Lambda_i + \hat\Lambda_j)}{\left(\sum_j \hat\Lambda_j\right)^2} + M' \le 2MN + M'\]

    This gives an upper bound for $\|dA\|$ on the interior of $\hat T$, so $A$ is a Lipschitz function.
\end{proof}

We will use the above lemma to extend evolving frames defined on faces of a convex polytope $\hat{T}$ to the interior of the polytope. In the statement of the next lemma, the pulled back frame bundle $\Phi^*\mathcal{F}_{GL}(T)$ is the bundle over $\hat{T}$ such that the fiber over $x \in \hat{T}$ is the set of bases for $T_{\Phi(x)}(T)$. Furthermore, $S_d(\hat{T})$ refers to the codimension-$d$ stratum of $\hat{T}$, which is the union of the relative interiors of faces of $\hat{T}$ that have codimension $d$.

\begin{lemma}\label{frameextthrm}
Let $\hat{T}\subset \mathbb{R}^n$ be a closed convex polytope of dimension $n$ and $\Phi: \hat{T} \to M$ a continuous embedding that is smooth on $\hat{T} \backslash \bigcup_{d \ge 2} S_d(\hat{T})$. We will refer to the image of $\Phi$ by $T$. Let $g(t)$ be a $C^2$ (in both space and time) nondegenerate symmetric bilinear form on $T$ for $t \in [0,1]$, and let $g_0 = g(0)$. Suppose that the following are true:
\begin{enumerate} 
\item There are $C^2$ (in both space and time) frames $f(t)|_{\hat{e}_i} : \hat{e}_i \to \Phi^*\mathcal{F}_{GL}(T)$, defined on some subset $\{\hat{e}_i\}_{i = 1}^N$ of the codimension-1 faces of $\hat{T}$.
\item For each $i,j \in 1,\dots,N$, $f(t)|_{\hat{e}_i}(x) = f(t)|_{\hat{e}_j}(x)$ for all $t \in [0,1]$ and $x \in \hat{p}_{ij} = \hat{e}_i \cap \hat{e}_j$, and $\tdiff{}{t}[g(t)((f(t)|_{\hat{e}_i})_j,(f(t)|_{\hat{e}_i})_k)] = 0$ for all $i, j, k, t$.
\item There is a smooth $g_0$-orthonormal frame $f_0: \hat{T} \to \Phi^*\mathcal{F}_{GL}(T)$ such that $f_0(x) = f(0)|_{\hat{e}_i}(x)$ for all $i$ and all $x \in \hat{e}_i$. 
\end{enumerate}
Then there is a $C^2$ homotopy of frames $f(t): \hat{T} \to \Phi^*\mathcal{F}_{GL}(T)$ which is $C^2$ on $\hat{T} \backslash \bigcup_{i,j} \hat{p}_{ij}$ such that $f(t)(x) = f(t)|_{\hat{e}_i}(x)$ for all $i$ and all $x \in \hat{e}_i$, $f(0)(x) = f_0(x)$ for all $x \in \hat{T}$, $\tdiff{}{t}[g(t)(f(t)_j,f(t)_k)] = 0$ for all $j, k, t$, and $f(t)$ is Lipschitz continuous for each $t$.
\end{lemma}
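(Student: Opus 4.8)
The plan is to realize $f(t)$ as the solution of a linear ODE in $t$ whose velocity is chosen to preserve the (time-constant) Gram matrix, and whose boundary values are forced to agree with the prescribed frames by uniqueness. First I would encode the constraint $\tdiff{}{t}[g(t)(f_j,f_k)]=0$ infinitesimally. Let $S(t)$ be the $g(t)$-self-adjoint endomorphism field defined by $\dot g(t)(X,Y)=g(t)(S(t)X,Y)$, which exists and is unique since $g(t)$ is nondegenerate and $\dot g(t)$ is symmetric. A frame evolving by $\dot f_j = B f_j$ keeps its Gram matrix constant if and only if $S + B + B^{\ast}=0$, where $B^{\ast}$ is the $g(t)$-adjoint; equivalently $B=-\tfrac12 S + A$ with $A$ a $g(t)$-skew-adjoint field. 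On each prescribed face $\hat e_i$ the given frame determines such a velocity $B_i$, and the associated $A_i := B_i + \tfrac12 S$ is automatically $g(t)$-skew-adjoint precisely because the face Gram matrices are constant in time by hypothesis. Moreover the $A_i$ agree on the overlaps $\hat p_{ij}$, since the boundary frames and their $t$-derivatives agree there.

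Next I would extend the skew field. Writing $A_i$ in the fixed smooth reference frame $f_0$ turns each matrix entry into a function on $\hat e_i$ (with $t$ carried along as a parameter) that is $C^r$ and matches its neighbors on $\hat p_{ij}$, so Lemma~\ref{smoothextthrm2}, applied componentwise, yields a Lipschitz extension $\tilde A$ on $\hat T$ that is $C^r$ on $\hat T\setminus\bigcup_{i,j}\hat p_{ij}$ and restricts to $A_i$ on each $\hat e_i$. Because $f_0$ is only $g_0$-orthonormal, $\tilde A$ need not be $g(t)$-skew for $t>0$, so I would project onto the skew part, setting $A:=\tfrac12(\tilde A - \tilde A^{\ast})$. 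This $A$ is $g(t)$-skew-adjoint everywhere, retains the Lipschitz/$C^r$ regularity of $\tilde A$ (the adjoint depends algebraically on the $C^2$ field $g$), and still satisfies $A|_{\hat e_i}=A_i$ because the $A_i$ are already skew.

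Finally I would define $f(t,x)$ as the solution of the pointwise-in-$x$ linear initial value problem $\dot f_j = (-\tfrac12 S + A)\,f_j$, $f(0,x)=f_0(x)$. By construction the velocity satisfies $S + B + B^{\ast}=0$, so $\tdiff{}{t}[g(t)(f_j,f_k)]=0$ and the Gram matrix stays equal to its value at $t=0$; in particular $f(t)$ is a $g(t)$-orthonormal frame and $f(0)=f_0$. On each face $\hat e_i$ the ODE coincides with the one satisfied by the prescribed frame (same velocity $B_i$, same initial datum $f_0|_{\hat e_i}$), so uniqueness of solutions to linear ODEs forces $f(t)|_{\hat e_i}$ to equal the given boundary frame. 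The regularity of $f$ then follows from smooth and Lipschitz dependence of ODE solutions on parameters and initial data: $f(t)$ is Lipschitz on all of $\hat T$ and $C^2$ on $\hat T\setminus\bigcup_{i,j}\hat p_{ij}$, inheriting exactly the regularity dichotomy of the coefficient field produced by Lemma~\ref{smoothextthrm2}.

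The main obstacle I anticipate is the regularity bookkeeping rather than the construction itself. Two points require care: making the extension of $A$ jointly regular in $(t,x)$ while keeping the \emph{uniform} Lipschitz bound of Lemma~\ref{smoothextthrm2} (which one gets by running the extension with $t$ as a parameter, noting that the affine functions $\lambda^j$ are independent of $t$ and that all constants are uniform over the compact interval $[0,1]$), and verifying that passing from $g$ to $\dot g$ to $S$, and then through the ODE solution operator, does not degrade the claimed $C^2$ regularity away from the codimension-2 skeleton. The skew-projection step is the one place where the hypothesis that the prescribed face frames have time-constant Gram matrix is essential, since it is exactly what makes the boundary data $A_i$ already skew and hence fixed by the projection.
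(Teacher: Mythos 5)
Your proof is correct, but it follows a genuinely different route from the paper's. The paper works with the change-of-basis matrix $u(t)$ defined by $f(t) = f_0 \cdot u(t)$, for which the Gram-constancy condition becomes the \emph{quadratic} matrix ODE $\dot u = u\tilde g^{-1}\bigl(K - \frac{1}{2}u^\intercal\tilde\sigma(t)u\bigr)$ with a free $\mathfrak{so}(n)$-valued parameter $K$; global existence on $[0,1]$ is then not automatic, and the paper handles it by factoring $u = XV$, where $X(t)$ is the LDL square root of $\tilde G(t)^{-1}$ and $V$ solves the linear equation $\dot V = V\eta K - \eta K'V$ on $O(n-k,k)$. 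You instead parameterize the frame velocity intrinsically as an endomorphism field $B = -\frac{1}{2}S + A$ with $A$ being $g(t)$-skew-adjoint, which makes the evolution linear in $f$ from the outset, so global existence, uniqueness (hence the boundary matching on each $\hat e_i$), and invertibility of the evolved frame are all immediate---this eliminates the paper's factorization step entirely. The price is that $g(t)$-skew-adjointness is a time-varying, metric-dependent condition, so the componentwise extension from Lemma~\ref{smoothextthrm2} need not preserve it, and your projection $A = \frac{1}{2}(\tilde A - \tilde A^{*})$ is genuinely needed; the paper avoids this because its free parameter $K$ takes values in the fixed space $\mathfrak{so}(n)$, where componentwise extension is automatically skew-symmetric. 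Your observation that the projection does not disturb the boundary data, precisely because the hypothesis of time-constant face Gram matrices forces each $A_i$ to be skew already, is the step that makes your variant close. Both arguments then conclude identically: extension via Lemma~\ref{smoothextthrm2} with $t$ carried as a parameter, ODE uniqueness at points of the faces, and Lipschitz/smooth dependence on parameters for the regularity dichotomy across $\bigcup_{i,j}\hat p_{ij}$---and both gloss the same fine point you flag, namely that $S$ (respectively the paper's $\tilde\sigma$ and $K^i$) involves $\dot g$ and $\dot u^i$ and so is a priori only $C^1$ in time, which still suffices since the solution of the ODE gains one time derivative over its coefficients. Like the paper's argument, yours uses no positivity of $g$, so it is equally valid in the pseudo-Riemannian setting the lemma is stated for.
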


\begin{proof}

    The proof will proceed as follows. We derive an ordinary differential equation for the change of basis $u: [0,1] \times \hat{T} \to GL(n)$ so that $f_0 \cdot u(t)$ is $g(t)$-orthonormal, show that it has solutions for all time, and design the free parameters of the equation so that the solution is $C^2$ and Lipschitz in space and agrees with $f(t)|_{\hat{e}_i}$.

    Suppose, first, that a frame $f(t)$ satisfies $\tdiff{}{t}[g(t)(f(t)_j,f(t)_k)] = 0$ for all $j,k$. Then let $u(t): \hat{T} \backslash \bigcup_{i,j}\hat{p}_{ij} \to GL(n)$ be the unique matrix such that $f(t) = f(0) \cdot u(t)$. The condition we have placed on $f(t)$ will allow us to find an ordinary differential equation for $u$. Let $\tilde{g}(t)_{ij} := g(t)(f(t)_i,f(t)_j)$, and $\tilde{\sigma}(t)_{ij} := \dot{g}(t)(f(0)_i,f(0)_j)$, both symmetric matrices. Then expanding out the inner products, we get
    \begin{align*}
    \pdiff{{\tilde{g}(t)}_{ij}}{t} &= \tdiff{}{t}[u^k_iu^l_jg(t)(f(0)_k,f(0)_l)] \\
    &= \dot{u}^k_iu^l_jg(t)(f(0)_k,f(0)_l) + u^k_i\dot{u}^l_jg(t)(f(0)_k,f(0)_l) \\
    &\quad\quad\quad+ u^k_iu^l_j\dot{g}(t)(f(0)_k,f(0)_l)\\
    &= (u^{-1})^m_k\dot{u}^k_ig(t)(f(t)_m,f(t)_j) + (u^{-1})^m_l\dot{u}^l_jg(t)(f(t)_i,f(t)_m) \\
    &\quad\quad\quad+ u^k_iu^l_j\tilde{\sigma}(t)_{kl}\\
    &= (u^{-1}\dot{u})^m_i\tilde{g}_{mj} + (u^{-1}\dot u )^m_j\tilde{g}_{mi} + (u^\intercal\tilde{\sigma}(t)u)_{ij} \\
    &= [\tilde{g}u^{-1}\dot{u} + (\tilde{g}u^{-1}\dot{u})^\intercal + u^\intercal\tilde{\sigma}(t)u]_{ij}\\
    &= [(\tilde{g}u^{-1}\dot{u} + \frac{1}{2}u^\intercal\tilde{\sigma}(t)u) + (\tilde{g}u^{-1}\dot{u} + \frac{1}{2}u^\intercal\tilde{\sigma}(t)u)^\intercal]_{ij}.
\end{align*}

    This shows that the condition $\pdiff{\tilde{g}}{t} = 0$ is equivalent to the condition that $\tilde{g}u^{-1}\dot{u} + \frac{1}{2}u^\intercal\tilde{\sigma}(t) u$ is a skew-symmetric matrix, which we will call $K$. Put another way, for any skew-symmetric matrix-valued map $K: [0,1] \times \hat{T} \to \mathfrak{so}(n)$, the equation 

    \begin{equation}\label{ueqn}\dot{u} = u\tilde{g}^{-1}(K - \frac{1}{2}u^\intercal\tilde{\sigma}(t) u)\end{equation}
    holds if and only if $\pdiff{\tilde{g}}{t} = 0$. Assuming $f(0)$ is $g(0)$-orthonormal, this is equivalent to saying that $f(t)$ is $g(t)$-orthonormal for all $t$, and hence $\tilde{g} = \eta =  \mathrm{diag}(1,\dots,1,-1,\dots,-1)$ with the number of negative elements equal to some number $k$ and the number of positive elements equal to $n-k$.

    In the next few paragraphs, we will argue that~\eqref{ueqn} admits a unique solution $u : [0,1] \times \hat{T} \to GL(n)$ for any Lipschitz choice of $K$. We start by constructing a solution for one particular choice of $K$, and then we leverage it to construct solutions for other choices of $K$.

    Let $\tilde{G}(t)_{ij} := g(t)({f_0}_i,{f_0}_j)$. If $X(t)$ is the LDL square root of $\tilde{G}(t)^{-1}$, so $\tilde{G}(t)^{-1} = X^T\eta X$, then $f_0 \cdot X(t)$ is a $g(t)$-orthonormal frame which is $C^2$ in space and time. Thus, $X$ satisfies equation (\ref{ueqn}) with some matrix-valued function $K': [0,1] \times \hat{T} \to \mathfrak{so}(n)$. 
    
    Now consider the differential equation
    \begin{equation}\label{Veqn}\dot{V} = V\eta K - \eta K'V.\end{equation}
    We claim that if $V:[0,1] \times \hat{T} \to O(n-k,k)$ satisfies (\ref{Veqn}), then $u = XV$ satisfies (\ref{ueqn}). This is straightforward to verify:
    \begin{align*}
        \dot{u} &= \dot{X}V + X\dot{V}\\
        &= X\eta(K' - \frac{1}{2}X^\intercal\tilde{\sigma}X)V + X(V\eta K - \eta K'V)\\
        &= X\eta K'V - \frac{1}{2}(uV^{-1})\eta(uV^{-1})^\intercal\tilde{\sigma}u  + u\eta K - X\eta K'V\\
        &= u\eta K- \frac{1}{2}uV^{-1} \eta V^{-\intercal}u^\intercal\tilde{\sigma}u \\
        &= u\eta(K - \frac{1}{2}V^\intercal V^{-\intercal}u^\intercal\tilde{\sigma}u)\\
        &= u\eta(K-\frac{1}{2}u^\intercal\tilde{\sigma}u).
    \end{align*}

    Note that for fixed $x \in \hat{T}$, the right-hand side of (\ref{Veqn}) is the time-dependent vector field $W(V,t) = V\eta K(t) - \eta K'(t)V$ on the manifold $O(n-k,k)$, and at each time it is also linear as a map $M^{n\times n}(\mathbb{R}) \to M^{n \times n}(\mathbb{R})$, so if $K, K': [0,1] \times \hat{T} \to \mathfrak{so}(n)$ are Lipschitz continuous in time then there exists a unique solution $V: [0,1] \times \hat{T} \to O(n-k,k)$ satisfying (\ref{Veqn}). If, in addition, $K$ and $K'$ are Lipschitz, $C^2$ in space, and $C^1$ in time when restricted to $[0,1] \times (\hat{T} \backslash \bigcup_{i,j} \hat{p}_{ij})$, then $V$ is $C^2$ and Lipschitz in both space and time when restricted to the same set. We already know that $K'$ satisfies all these conditions (because $X$ is in the same differentiability class as $G$), so we just need to choose $K$ appropriately.

    Since we have $C^2$ frames $f(t)|_{\hat{e}_i}$ on the faces $\hat{e}_i$, which are single-valued at $\hat{p}_{ij}$, and $\tdiff{g(t)((f(t)|_{\hat{e}_i})_j,(f(t)|_{\hat{e}_i})_k)}{t} = 0$, there are $C^2$ matrix-valued maps $u^i : [0,1] \times \hat{e}_i \to GL(n)$ and $K^i : [0,1] \times \hat{e}_i \to \mathfrak{so}(n)$ which are single-valued on $\hat p_{ij}$ and satisfy $\eta {u^i}^{-1}\dot{u}^i + \frac{1}{2}{u^i}^\intercal\tilde{\sigma}(t) u^i = K^i$. By Lemma \ref{smoothextthrm2}, the $K^i$'s can be extended (by extending each coordinate) to a continuous map $K: [0,1] \times \hat{T} \to \mathfrak{so}(n)$ which is $C^2$ and has bounded first derivatives on $[0,1] \times (\hat{T} \backslash \bigcup_{i,j}\hat{p}_{ij})$.

    This is sufficient to assert the existence of a unique family of maps $u(t): \hat{T} \to GL(n)$ satisfying $\dot{u} = u\eta (K - \frac{1}{2}u^\intercal\tilde{\sigma}(t) u)$ for $t \in [0,1]$ and $x \in \hat{T}$, $u(0) = I$, and $u(t)|_{\hat{e}_i} = u^i(t)$ (since the solution to this ordinary differential equation is unique at each $x \in \hat{e}_i$). By the smooth dependence on parameters, $u$ also has continuous second spatial derivatives when restricted to $[0,1] \times \hat{T} \backslash \bigcup_{i,j}\hat{p}_{ij}$ and is Lipschitz on $[0,1] \times \hat{T}$.
\end{proof}

\begin{corollary}\label{frameextthrm2}
Let $T$, $g(t)$, and $f_0$ be as above. Then there exists a $C^2$ homotopy of $C^2$ frames $f(t): T \to \mathcal{F}_{GL}(T)$ such that $f(0) = f_0$ and $\pdiff{}{t}g(t)(f_i(t),f_j(t)) = 0$.
\end{corollary}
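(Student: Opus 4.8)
The plan is to recognize Corollary~\ref{frameextthrm2} as the special case of Lemma~\ref{frameextthrm} in which \emph{no} boundary faces $\hat{e}_i$ are prescribed (i.e.\ $N=0$). Once the face data are dropped, the matching hypotheses at the intersections $\hat{p}_{ij}$ become vacuous, the singular locus $\bigcup_{i,j}\hat{p}_{ij}$ is empty, and the extension procedure of Lemma~\ref{frameextthrm} produces a globally $C^2$ homotopy rather than one that is merely Lipschitz along codimension-2 strata. So the only real task is to exhibit the orthonormalizing homotopy directly and observe that nothing obstructs its global smoothness.

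Concretely, I would proceed as follows. Form the $C^2$ family of Gram matrices $\tilde{G}(t)_{ij} := g(t)({f_0}_i,{f_0}_j)$; since $f_0$ is $g_0$-orthonormal we have $\tilde{G}(0)=\eta$, the constant signature matrix $\mathrm{diag}(1,\dots,1,-1,\dots,-1)$. Because $g(t)$ is nondegenerate and $C^2$ in $(t,x)$, $\tilde{G}(t)$ is $C^2$, invertible, and of constant signature, so it admits an LDL (Cholesky-type) square root $X(t)$ of $\tilde{G}(t)^{-1}$ that depends $C^2$ on $(t,x)$ and satisfies $X(0)=I$. This is exactly the factor constructed in the proof of Lemma~\ref{frameextthrm}; equivalently, $X$ is the solution of the ODE~\eqref{ueqn} with the free skew-symmetric parameter taken to be $K\equiv 0$, which is trivially Lipschitz and $C^2$ on all of $\hat{T}$, so the step through~\eqref{Veqn} is unnecessary and one may take $u=X$ outright. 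Setting $f(t) := f_0 \cdot X(t)$ (read through the blow-down $\Phi$) then gives $f(0)=f_0$, and $f(t)$ is $g(t)$-orthonormal by construction, which is precisely the statement $\pdiff{}{t}\,g(t)(f_i(t),f_j(t))=0$.

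The transfer from $\hat{T}$ to $T$ is carried out by composing with $\Phi$: where $\Phi$ is a diffeomorphism---in particular on $\mathring{T}$ and across the codimension-1 faces---the pushed-forward frame $f(t)=F(t)\circ\Phi^{-1}$ inherits the $C^2$ regularity of $F(t)=f_0\cdot X(t)$, and its spatial smoothness is no worse than that of the input data $f_0$ and $g(t)$ since $X(t)$ depends $C^2$ on $x$ as well. I do not expect a serious obstacle here: the entire nontrivial content, namely the $C^2$ dependence of the orthonormalizing factor on the parameters $(t,x)$, is already established within Lemma~\ref{frameextthrm} (smooth dependence of the LDL factorization, and of the solution of the linear ODE~\eqref{Veqn}, on $C^2$ data). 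The only point requiring a word of care is the absence of the Lipschitz-only behavior that afflicts the full Lemma, and this is immediate once one notes that with no prescribed faces there are no corners $\hat{p}_{ij}$ at which the extension of Lemma~\ref{smoothextthrm2} degrades below $C^2$.
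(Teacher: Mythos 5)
Your proposal is correct and essentially identical to the paper's proof, which likewise specializes Lemma~\ref{frameextthrm} by taking $\hat{T}=T$, $\Phi=\mathrm{id}$, and no prescribed faces (so the corner locus $\bigcup_{i,j}\hat{p}_{ij}$ is empty and nothing degrades below $C^2$), with the admissible choice $K\equiv 0$ in the ODE~\eqref{ueqn}. One minor misstatement you should delete: the LDL factor $X(t)$ is in general \emph{not} the solution of~\eqref{ueqn} with $K\equiv 0$ --- its associated skew parameter $K'$ need not vanish (e.g.\ for $\tilde{G}(t)=\left(\begin{smallmatrix}1 & t\\ t & 1\end{smallmatrix}\right)$ one computes $K'(0)\neq 0$) --- but this is harmless, since either taking $u=X$ outright, as your main construction does, or solving~\eqref{ueqn} with $K=0$, as the paper does, yields the required $C^2$ orthonormalizing homotopy with $f(0)=f_0$.
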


\begin{proof}
In the proof of Lemma \ref{frameextthrm}, $\hat{T}$ can be set equal to $T$ and $\Phi$ can simply be the identity map. We no longer have any boundary conditions on $f(t)|_{\hat{e}_i}$. Then choosing a $C^2$ map $K: [0,1] \times T \to \mathfrak{so}(n)$ is enough to produce a solvable o.d.e.~for $u$. $K=0$ is a valid choice.
\end{proof}

\begin{proof}[Proof of Theorem \ref{compatframeexistencethrm}]
    Firstly, for each $p \subset M$, pick a frame $E_{p0}$ for $Tp$ which is $g_0$-orthonormal and let $E_p(t) = (\tau_1,\dots,\tau_{n-2})$ be a $C^2$ homotopy of $g(t)$-orthonormal frames on $p$ which are $C^2$ in space and such that $E_p(0) = E_{p0}$. These frames could be found by applying Corollary \ref{frameextthrm2}. As usual, extra $g(t)$-orthonormal vectors can be appended to $E_p(t)$ to produce frames $E_{p,e}(t) = (\tau_1,\dots,\tau_{n-2},\vec{\nu})$ and  $E_{p,e}^T(t) = (\tau_1,\dots,\tau_{n-2},\vec{\nu},\vec{n})$ which are in the same differentiability class and satisfy $E_{p,e}(0) = E_{p,e0}$ $E_{p,e}^T(0) = E_{p,e0}^T$ for each $p \subset e \subset T$. Also let $E_{e0}$ be a $g_0$-orthonormal frame on $e$ and $E_{e0}^T$ be the same frame with the outward unit normal appended, and let $A_{p,e}: p \to O(n-1)$ be the map such that $E_{e0} =E_{p,e0} \cdot A_{p,e}$, and likewise let $A_{p,e}^T: p \to O(n)$ be the map such that $E_{e0}^T =  E_{p,e0}^T \cdot A_{p,e}^T$.

\begin{figure}
    \centering
    \includegraphics[width=0.5\linewidth]{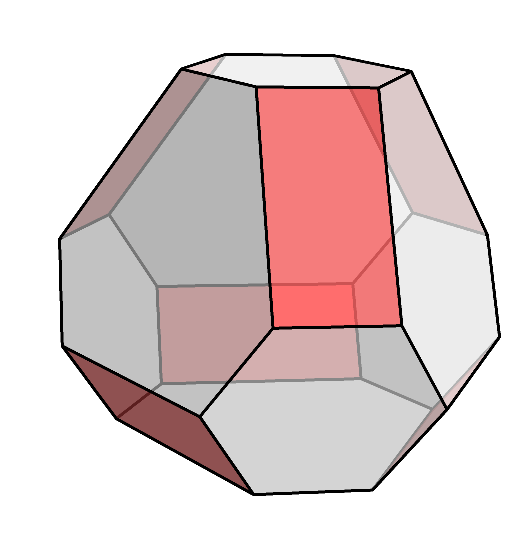}
    \caption{Blow-up of a solid tetrahedron. The regions shaded red are the sets $\overline{{\Phi^T}^{-1}(\mathring{p})}$ for the codimension-2 faces $p \subset T$. The frames $F^T(t)|_p$ are defined on these regions, and the frames $\tilde{E}_e(t)|_p$ are defined on the long sides of these regions.}
    \label{fig:blowup}
\end{figure}

    For each codimension-1 face $e$ and codimension-2 face $p \subset e$, let $\tilde{E}_e(t)|_p : \overline{{\Phi^T}^{-1}(\mathring{p})} \cap \overline{{\Phi^T}^{-1}(\mathring{e})} \to \mathcal{F}_O(e)$ be defined by the relation $\tilde{E}_e(t)|_p(x) = E_{p,e}(t)({\Phi^T}(x)) \cdot A_{p,e}(\Phi^T(x)) $, and let $\tilde{E}_{e0} : \overline{{\Phi^T}^{-1}(\mathring{e})} \to \mathcal{F}_O(e)$ be defined by $\tilde{E}_{e0}(x) = E_{e0}({\Phi^T}(x))$. Since the sets $\overline{{\Phi^T}^{-1}(\mathring{p})} \cap \overline{{\Phi^T}^{-1}(\mathring{e})}$ for differing $p$ are all disjoint and $\tilde E_e(0)|_p = \tilde E_{e0}$, by Lemma \ref{frameextthrm}, there exists a smooth map $\tilde{E}_e(t) : \overline{{\Phi^T}^{-1}(\mathring{e})} \to \mathcal{F}_O(e)$ which extends $\tilde{E}_e(t)|_p$ for each $p \subset e$ and such that $\tilde{E}_e(0) = \tilde{E}_{e0}$. The outwards normal vector can be appended to obtain $E_e^T(t)$.

    Define $F^T(t)|_e: \overline{{\Phi^T}^{-1}(\mathring{e})} \to \mathcal{F}_O(T)$ by $F^T(t)|_e(x) :=    \tilde{E}_e^T(t)(x) \cdot \mu_e^T(\Phi^T(x))$. Note that $F^T(0)|_e(x) = {\Phi^T}^*f_0(x)$ for all $x \in \overline{{\Phi^T}^{-1}(\mathring{e})}$.

    Next it is time to define $F^T(t)|_p$ for codimension-2 faces $p$ of $T$. It needs to agree with $F^T(t)|_e$ wherever both frames are defined. Due to the fact that we are keeping $\mu_e^T$ and $A_{p,e}^T$ constant and we want $F^T(t)$ to be compatible, the only question is what the function $r_{p,e}^T(t): p \to \mathbb{R}$ should be. Note that equation (\ref{discreteframejump}) is equivalent to 
    \[
    E_{p,e_j}^{T_j}(t) \cdot \exp\left(r_{p,e_j}^{T_j}(t)w^{n-1}_n\right) A_{p,e_j}^{T_j}\mu_{e_j}^{T_j} = E_{p,e_{j+1}}^{T_j}(t) \cdot A_{p,e_{j+1}}^{T_j} \mu_{e_{j+1}}^{T_j}.
    \]
    In other words, it is equivalent to $F^T(t)|_e$ being equal to $F^T(t)|_p$ wherever both are defined. Since the matrices $\mu_e^T$ and $A_{p,e}^T$ are held fixed for the entire evolution, we require $r_{p,e_j}^{T_j}(t) - \theta_p^{T_j}(t)$ to be constant. Since $r_{p,e_j}^{T_j}(0) = 0$, this means $r_{p,e_j}^{T_j}(t) = \theta^{T_j}_p(t) - \theta^{T_j}_p(0)$.

    Now we bring it all together. $F^T(t)|_p$ and $F^T(t)|_e$ are defined and $C^2$ on faces of the polytope $B_T \subseteq \mathbb{R}^n$ and single-valued everywhere, therefore they can be extended, for each $t$, to a section $F^T(t): B_T \to {\Phi^T}^*\mathcal{F}_O(T)$ which remains orthonormal for all $t$, is $C^2$ on $B_T \backslash E_{B_T}$, and has bounded derivatives on $\mathring{B}_T$. This frame is $C^2$ in time and satisfies $F^T(0) = {\Phi^T}^*f_0$. Lastly, set $f^T(t)(x) = F^T(t)(\Phi^{-1}(x))$ for $x \in S_0(T) \cup S_1(T)$. The resulting frame satisfies properties 1-3 of a $g(t)$-compatible frame by construction, and $f^T(0) = f_0$, so $f(1)$ is a compatible frame.
\end{proof}

One feature of this proof is that it suggests that the only obstruction for a frame satisfying conditions 1-3 of a compatible frame to also satisfy the fourth condition is the existence of a homotopy $g(t)$ between $g$ and a continuous metric $g_0$ such that $\theta_p^{T_j}(0) = \theta^{T_j}_p(1) - r_{p,e_j}^{T_j}(1)$ for all $p,j$.

\section{Generalized Gauss-Bonnet Theorem}\label{gengaussbonnetsection}

In this section we will work out what happens when we remove the restriction that $i_{\partial M}^*\phi = 0$, in the case $n=2$. We can derive a generalization of the Gauss-Bonnet theorem.

When specialized to 2 dimensions, the expression (\ref{distcurvfinal}) simplifies considerably, since $\mathfrak{so}(2)$ and $\Lambda^2(M)$ are both one-dimensional. In this case there exists a function $K^T: T \to \mathbb{R}$ such that $\Omega^T = K^T w^2_1 \otimes dA^T$, where $dA^T$ is the positively oriented volume form on $T$ induced by the metric $g^T$. $K^T$ is precisely the Gauss curvature, and does not depend on $f^T$. Additionally, the adjoint action is trivial on $\mathfrak{so}(2)$, and if $h$ is a smooth $SO(2)$-valued function which is a rotation by the angle $\theta$, then $h^{-1}dh$ is equal to $\begin{bmatrix}0 & -d\theta \\
d\theta & 0\end{bmatrix} = w^1_2 \otimes d\theta$.

To make use of these simplifications, let us evaluate $\iprod{\iprod{f^*\Omega_\mathrm{dist},\frac{1}{2}\phi w^2_1}}$ when $\phi$ is an arbitrary smooth function on $M$. All of the steps in Section \ref{integrationbypartssection}) are the same up until Equation (\ref{distcurvpart1}), where we need to add some boundary terms. Without loss of generality we can assume that $f^T$ is always positively oriented and, for all $e \subset \partial M$, $E_e^T = (\tau_e,\vec{n}_e)$ where $\vec{n}_e$ is the outward-pointing normal vector to $e$ and $\tau_e$ is the unit tangent vector that makes the frame positively oriented. Therefore the matrices $\mu_e^T$, when $e \subset \partial M$, are just rotations by some angle $\bar\mu_e$. 

The first boundary terms come from the integrals along codimension-1 edges $e \subset \partial M$, and they are equal to
\begin{align}
-\sum_{e \subset \partial M}\int_{\mathring{e}} \frac{1}{2}\iprod{f^*\omega \wedge \phi w^2_1} &= -\frac{1}{2}\sum_{e \subset \partial M} \int_{\mathring{e}}\iprod{ \big({\mu_e^T}^{-1}d\mu_e^T+ \mathrm{Ad}({\mu_e^T}^{-1})(\Two_e^T) \big) \wedge \phi w^2_1} \label{boundarycurv}\\
&= -\frac{1}{2}\sum_{e \subset \partial M} \int_{\mathring{e}}\phi\iprod{ \big( w^1_2 \otimes d\bar\mu_e + \Two_e^T \big) \wedge w^2_1} \notag\\
&= -\sum_{e \subset \partial M}\int_{\mathring{e}}\phi (d\bar\mu_e + (\Two_e^T)^2_1).\notag
\end{align}

The other boundary terms come from the integrals along codimension-2 points $p \in \partial M$, and following equation (\ref{angledefectpart1}), they can simply be expressed as

\begin{align*}-\sum_{p \in \partial M} \frac{1}{2}\iprod{\sum_{T \ni p}\mathrm{Ad}((A_{p,e}^T\mu_e^T)^{-1})(r_{p,e}^Tw^1_2) \wedge \phi w^2_1} &= -\frac{1}{2}\sum_{p \in \partial M} \phi(p)\iprod{\sum_{T \ni p} r_{p,e}^Tw^1_2,w^2_1}\\
&= -\sum_{p \in \partial M}\phi(p) \sum_{T \ni p}r_{p,e}^T.\end{align*}

We will now assume that the frame $f$ was constructed as in Theorem \ref{compatframeexistencethrm}, meaning the homotopy of frames is constructed such that the matrices $\mu_e^T(t)$ are constant throughout the evolution. We will also assume that a counterclockwise enumeration of the triangles $T_1,\dots,T_k$ and edges $e_1,\dots,e_{k+1}$ incident to $p \in \partial M$ is chosen much like in \ref{angledefectsection}, with the notable difference that $e_1$ and $e_{k+1}$ are only tangent to the triangles $T_1$ and $T_k$ respectively.

One of the key facts for the proof of Theorem \ref{compatframeexistencethrm} is that $r_{p,e_j}^{T_j}(t) = \theta_p^{T_j}(t) - \theta_p^{T_j}(0)$. However, since $f(0)$ is continuous, $\sum_{j = 1}^k \theta_p^{T_j}(0)$ is equal to the jump in angle between $f(0)$ and the two frames $(\tau_{e_{k+1}},\vec{n}_{e_{k+1}})$ and $(-\tau_{e_1},-\vec{n}_{e_1})$. In other words, $\sum_{j = 1}^k \theta_p^{T_j}(0) = 2\pi m_p + \pi + \bar\mu_{e_{k+1}}(0) - \bar\mu_{e_1}(0)$ for some integer $m_p$. The $2\pi m_p$ term is necessary because $\bar \mu_{e_j}(0)$ is only well defined up to addition by $2\pi$. The quantity $\bar\mu_{e_{k+1}} - \bar\mu_{e_1} + 2\pi m_p$ will from now on be shortened to $[\![\mu]\!]|_p$. Since these angles are kept fixed through the whole evolution, we get $\sum_{j = 1}^k r_{p,e_j}^{T_j} = -[\![\mu]\!]|_p - \pi + \sum_{j = 1}^k\theta_p^{T_j}$. So the additional angle defect terms take the form

\begin{equation}\label{boundaryangledefect}-\sum_{p \in \partial M} \phi(p)\sum_{T \ni p} r_{p,e}^T = \sum_{p \in \partial M}\phi(p)\left([\![\mu]\!]|_p + \pi - \sum_{T \ni p}\theta_p^T\right).\end{equation}

Synthesizing (\ref{boundarycurv}) and (\ref{boundaryangledefect}) into the expression for $\iprod{\iprod{f^*\Omega,\frac{1}{2}\phi w^2_1}}$, we can define the \emph{distributional Gauss curvature}:

\begin{align}\label{gausscurvfinal}\iprod{\iprod{K_\mathrm{dist},\phi}} &:= \iprod{\iprod{f^*\Omega_{\mathrm{dist}},\frac{1}{2}\phi w^2_1}} + \sum_{e \subset \partial M} \int_{\mathring{e}}\phi d\bar\mu_e - \sum_{p \in \partial M} \phi(p)[\![\mu]\!]|_p\\
&= \sum_{T \subseteq M} \int_{\mathring{T}}K^T\phi dA - \sum_{\mathring{e} \subset \mathring{M}}\int_{\mathring{e}} \phi [\![\Two_e]\!]^2_1 + \sum_{p \in \mathring{M}}\Theta_p \phi(p) \notag \\
&\quad\quad- \sum_{e \subset \partial M} \int_{\mathring{e}}\phi{\Two_e}^2_1 + \sum_{p \in \partial M}\left(\pi - \sum_{T \ni p}\theta_p^T\right)\phi(p).\notag\end{align}

This quantity is frame-independent and identical to the \emph{densitized distributional Gauss curvature} investigated in~\cite{gawlik2020high,gawlik2024intrinsicfiniteelementerror,BKGa22}. The fact that the frame dependence of the distributional curvature is concentrated in boundary terms is reminiscent of the following formula that is valid for a smooth metric and smooth frame, obtained from integration by parts:
\[
\int_M f^*\omega^1_2 \wedge d\phi =  \int_{\mathring{M}} \phi K dA - \int_{\partial M \backslash E_M} \phi f^*\omega^1_2.
\]

Therefore we can interpret the boundary components of the distributional curvature as being a distributional version of the connection one-form pulled back to the boundary. The analogue of $\int_{\partial M \backslash E_M} \phi f^*\omega^1_2$ above would be given by
\[\sum_{e \subset \partial M} \int_{\mathring{e}} \phi(d\bar\mu_e - kds) - \sum_{p \in \partial M}([\![\mu]\!]|_p + \pi - \sum_{T \ni p}\theta_p^T)\phi(p),
\]
where $k = \iprod{\nabla_\tau\vec{n},\tau} = -{\Two_e}^2_1(\tau)$ is the geodesic curvature of $e$ and $ds = \tau^\flat$ is the induced Riemannian length form on $e$.

Note also that, although the numbers $\bar \mu_e$ and $m_p$ are not well defined, $[\![\mu]\!]|_p$ can often be known in practical scenarios because we may impose constraints on the smooth metric that $g$ is meant to approximate. Sometimes these constraints are dictated by topology. For instance, if we know that $M$ is a manifold with boundary but no corners, then $[\![\mu]\!]|_p = 0$ for all $p$, because $\bar\mu$ must not have any discontinuities in the smooth metric. The form $d\bar\mu_e$, meanwhile, is actually well-defined for any compatible frame on any manifold. 

The distributional Gauss-Bonnet functional can be used to derive the Gauss-Bonnet theorem for compact 2-dimensional Regge manifolds:

\begin{theorem}
The following equation is true:
\[\sum_{T \subseteq M} \int_{\mathring{T}} K^T dA + \sum_{\mathring{e} \subset \mathring{M}} \int_{\mathring{e}}[\![k]\!]ds + \sum_{p \in \mathring{M}} \Theta_p + \sum_{e \subset \partial M} \int_{\mathring{e}}kds + \sum_{p \in \partial M}(\pi - \sum_{T \ni p} \theta_p^T) = 2\pi\chi(M).\]
\end{theorem}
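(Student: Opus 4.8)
The plan is to prove the equivalent statement $\iprod{\iprod{K_{\mathrm{dist}},1}} = 2\pi\chi(M)$. Indeed, setting $\phi \equiv 1$ in the definition (\ref{gausscurvfinal}) of the distributional Gauss curvature, and using the identity $k\,ds = -{\Two_e}^2_1$ noted just after that definition (so that $\int_{\mathring e}[\![k]\!]\,ds = -\int_{\mathring e}[\![\Two_e]\!]^2_1$ and $\int_{\mathring e}k\,ds = -\int_{\mathring e}{\Two_e}^2_1$), the right-hand side of (\ref{gausscurvfinal}) becomes term-for-term the left-hand side of the theorem. Thus it suffices to evaluate the total distributional curvature $\iprod{\iprod{K_{\mathrm{dist}},1}}$, which I would do by a localize-and-reassemble argument resting on the classical Gauss-Bonnet theorem applied cell by cell.

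Each codimension-0 polytope $T$ is a topological disk carrying the smooth metric $g_T$ together with a piecewise-smooth boundary, so the classical Gauss-Bonnet theorem for a curved polygon gives
\[
\int_{\mathring T} K^T\,dA + \int_{\partial T}\kappa_g\,ds + \sum_{v \subset \partial T}(\pi - \theta_v^T) = 2\pi,
\]
where $\kappa_g$ is the geodesic curvature of the smooth arcs of $\partial T$ in $g_T$, the interior angle $\theta_v^T$ at each corner $v$ is measured in $g_T$, and $\chi(T)=1$. Summing over all $T$ yields $2\pi F$ on the right, with $F$ the number of $2$-cells. I would then reorganize the two boundary sums. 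The bulk term $\sum_T\int_{\mathring T}K^T\,dA$ is already in final form. Each interior edge $e=T\cap T'$ is counted once from each incident cell, with opposite induced orientations and opposite outward normals, and the bookkeeping of these signs turns $\kappa_g^T+\kappa_g^{T'}$ into the jump $[\![k]\!]$; each boundary edge contributes $\int_{\mathring e}k\,ds$ directly. For the corner sums I would group by vertices $p$, writing $c_p$ for the number of $2$-cells meeting $p$ and using $\sum_{T\ni p}\theta_p^T = 2\pi-\Theta_p$ at interior vertices, to obtain
\[
\sum_{T\ni p}(\pi - \theta_p^T) = \Theta_p + (c_p-2)\pi \quad (p\in\mathring M),
\]
\[
\sum_{T\ni p}(\pi - \theta_p^T) = \Big(\pi - \sum_{T\ni p}\theta_p^T\Big) + (c_p-1)\pi \quad (p\in\partial M).
\]
Assembling the summed per-cell identities then gives
\[
2\pi F = \iprod{\iprod{K_{\mathrm{dist}},1}} + \pi\Big[\sum_{p\in\mathring M}(c_p-2) + \sum_{p\in\partial M}(c_p-1)\Big].
\]

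It remains to evaluate the combinatorial bracket. Counting (2-cell, corner) incidences two ways gives $\sum_p c_p = \sum_T(\#\text{edges of }T) = 2E_{\mathrm{int}} + E_\partial$, where $E_{\mathrm{int}}$ and $E_\partial$ are the numbers of interior and boundary edges; writing $V_{\mathrm{int}},V_\partial$ for the corresponding vertex counts, the bracket equals $(2E_{\mathrm{int}}+E_\partial) - 2V_{\mathrm{int}} - V_\partial$. Since $\partial M$ is a closed $1$-manifold, each component is a cycle and hence $E_\partial=V_\partial$; substituting collapses the bracket to $2(E-V)$, so $\iprod{\iprod{K_{\mathrm{dist}},1}} = 2\pi F - 2\pi(E-V) = 2\pi(V-E+F) = 2\pi\chi(M)$, which is the claim.

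The step I expect to be the main obstacle is the orientation-and-sign bookkeeping at interior edges: one must check that summing the two one-sided geodesic-curvature integrals—each taken with the boundary orientation induced by its own cell and with that cell's outward normal—reproduces exactly the jump $[\![k]\!]$ under the sign and orientation conventions fixed after (\ref{curvjump}) and in (\ref{gausscurvfinal}). Everything else reduces either to the classical curved-polygon Gauss-Bonnet theorem or to elementary incidence counting. I note an alternative route that sidesteps the per-cell computation: since $\iprod{\iprod{K_{\mathrm{dist}},1}}$ is frame-independent, one could deform $g$ to a smooth metric $g_0$ along the homotopy furnished by Theorem \ref{compatframeexistencethrm}, show the total is unchanged along the deformation, and invoke the smooth Gauss-Bonnet theorem at $g_0$; however, establishing homotopy-invariance of the total appears no easier than the direct count above, so I would present the combinatorial argument as the main proof.
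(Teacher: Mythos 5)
Your proof is correct, but it takes a genuinely different route from the paper's. The paper evaluates $\iprod{\iprod{K_{\mathrm{dist}},1}}$ directly from the definition (\ref{gausscurvfinal}): since $d(1)=0$ and $[\omega,\omega]=0$ when $n=2$, all bulk terms in $\iprod{\iprod{f^*\Omega_{\mathrm{dist}},\frac{1}{2}w^2_1}}$ vanish, so the total curvature visibly equals the boundary frame data $\sum_{e\subset\partial M}\int_{\mathring{e}}d\bar\mu_e-\sum_{p\in\partial M}[\![\mu]\!]|_p$; because the construction of Theorem \ref{compatframeexistencethrm} holds the matrices $\mu_e^T$ (hence $\bar\mu_e$ and $[\![\mu]\!]|_p$) fixed along the homotopy, this quantity coincides with its value for the smooth pair $(g(0),f(0))$, where the classical Gauss-Bonnet theorem gives $2\pi\chi(M)$. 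This is a sharpened form of the ``alternative route'' you dismissed at the end: no homotopy-invariance of the total needs to be proved, since the total already equals data that is constant along the deformation by construction. Your argument---classical curved-polygon Gauss-Bonnet on each 2-cell, reorganization of edge and corner terms, and Euler-characteristic counting with $E_\partial=V_\partial$---is more elementary and, notably, frame-free: it establishes the identity for any Regge metric without invoking compatible frames or the homotopy hypothesis at all, whereas the paper's proof is stated only for frames obtained as in Theorem \ref{compatframeexistencethrm}. The sign bookkeeping you flag does close: with the paper's convention $k=\iprod{\nabla_\tau\vec n,\tau}$, $k$ is even in $\tau$ while $ds=\tau^\flat$ is odd, so each one-sided integral $\int_{\mathring{e}}k^T\,ds$ is orientation-independent as a number, and the two cells' contributions add to $\int_{\mathring{e}}(k^T+k^{T'})\,ds=\int_{\mathring{e}}[\![k]\!]\,ds$, consistent with $[\![\Two_e]\!]=\Two_e^T+\Two_e^{T'}$ taken with each side's outward normal; your incidence count $\sum_p c_p=2E_{\mathrm{int}}+E_\partial$ is likewise justified because each 2-cell is the embedded image of a convex polygon, hence a disk with equally many corners and edges. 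In short, the paper's route buys brevity given the machinery already built and serves as an internal consistency check of the distributional framework; yours buys independence from that machinery, at the cost of redoing by hand the cancellation that the distributional formalism packages automatically.
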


Note that, in the case that $g$ is a smooth metric, this theorem reduces to the classical Gauss-Bonnet theorem. Also note that we are using different sign conventions for the geodesic curvature $k$ than some authors do.

\begin{proof}
    We will prove this theorem by calculating $\iprod{\iprod{K_{\mathrm{dist}},1}}$. Note that, since $[\omega,\omega] = 0$ in the case that $n = 2$, we have (for any compatible frame $f$ obtained from a smooth frame $f(0)$ as in Theorem \ref{compatframeexistencethrm})

    \begin{align*}\iprod{\iprod{K_{\mathrm{dist}},1}} &= \iprod{\iprod{f^*\Omega_\mathrm{dist},\frac{1}{2}w^2_1}} + \sum_{e \subset \partial M} \int_{\mathring{e}}d\bar\mu_e - \sum_{p \in \partial M} [\![\mu]\!]|_p \\
    &= \iprod{\iprod{df^*\omega,\frac{1}{2}w^2_1}} + \sum_{e \subset \partial M} \int_{\mathring{e}}d\bar\mu_e - \sum_{p \in \partial M} [\![\mu]\!]|_p \\
    &= \sum_{T \subseteq M} \int_{\mathring{T}} \iprod{{f^T}^*\omega \wedge \frac{1}{2}w^2_1 \otimes d(1)} + \sum_{e \subset \partial M} \int_{\mathring{e}}d\bar\mu_e 
    -\sum_{p \in \partial M} [\![\mu]\!]|_p\\
    &= \sum_{e \subset \partial M} \int_{\mathring{e}}d\bar\mu_e - \sum_{p \in \partial M} [\![\mu]\!]|_p.\end{align*}

    Meanwhile, we also have 
    \begin{align*}
    \iprod{\iprod{K_{\mathrm{dist}},1}} =& \sum_{T \subseteq M} \int_{\mathring{T}} KdA + \sum_{\mathring{e} \subseteq \mathring{M}} \int_{\mathring{e}} [\![kds]\!] + \sum_{p \in \mathring{M}} \Theta_p\\
    &+\sum_{e \subset \partial M}\int_{\mathring{e}}kds + \sum_{p \in \partial M}\left(\pi - \sum_{T \ni p} \theta_p^T\right).\\
    \end{align*}
    Therefore 
    \begin{align*}
    &\sum_{T \subseteq M} \int_{\mathring{T}} K^TdA + \sum_{\mathring{e} \subseteq \mathring{M}} \int_{\mathring{e}} [\![kds]\!] + \sum_{p \in \mathring{M}} \Theta_p + \sum_{e \subset \partial M}\int_{\mathring{e}}kds + \sum_{p \in \partial M}\left(\pi - \sum_{T \ni p} \theta_p^T\right) \\
    &=\sum_{e \subset \partial M} \int_{\mathring{e}}d\bar\mu_e - \sum_{p \in \partial M} [\![\mu]\!]|_p.
    \end{align*}

    The theorem is almost proved. All we need now is to show that $\sum_{e \subset \partial M} \int_{\mathring{e}}d\bar\mu_e - \sum_{p \in \partial M} [\![\mu]\!]|_p = 2\pi\chi(M)$. Note that $\bar\mu$ and $m_p$ are the same quantities for the smooth $g(0)$-orthonormal frame $f(0)$. The Gauss-Bonnet theorem for smooth metrics implies that, if we were to evaluate the distributional Gauss-Bonnet functional for the smooth metric $g(0)$ and smooth frame $f(0)$, we must get $2\pi\chi(M)$ for this term.
\end{proof}

\appendix

\section{Appendix: Polyhedral Manifolds, Blow-Ups, and Integration by Parts} \label{appendix}
Here we will provide a rapid overview of some key concepts that we have used, some of which are nonstandard. The primary reference for Whitney manifolds (also called ``regular manifolds'' by Whitney) and Stokes' Theorem is \cite{whitney}, with \cite{Knapp} providing a gentler introduction. The primary reference for the term ``blow-up'' as we have used it is \cite{BERCHENKOKOGAN2025100529}, although the concept was introduced earlier in at least \cite{Melrose}.

A \emph{polyhedral $n$-manifold} $M$ is a smooth manifold which is locally modeled on relatively open subsets of nondegenerate unions of parallelopipeds. This class of manifolds includes all polytopes and all domains which can be obtained by identifying faces of convex polytopes in $\mathbb{R}^n$ by rigid motions.

Specifically, every point $x \in M$ has a coordinate neighborhood $(U,\phi_U)$ where $\phi_U$ is a one-to-one open map $U \to R_U \subset \mathbb{R}^n$. $R_U = P_1\cup\dots\cup P_{N_U}$, where each set $P_i$ is a closed nondegenerate $n$-dimensional parallelopiped in $\mathbb{R}^n$ and the intersection $P_i \cap P_j$ is either empty or a shared face of both $P_i$ and $P_j$. A continuous map $f: A \subseteq R_U \to R_{U'}$, where $A$ is a relatively open subset of $R_U$, is considered a smooth map if it can be extended to a smooth map $f: \mathbb{R}^n \to \mathbb{R}^n$. The smooth structure is given by a maximal atlas of coordinate patches $(U,\phi_U)$ such that the transition maps $\phi_U \circ \phi_{U'}^{-1}$ are all smooth with this definition. As usual, $M$ is also required to be a second-countable Hausdorff space.

Note that if $\psi: A \subseteq R_U \to B \subseteq R_{U'}$ is a diffeomorphism, then each face $\Delta_d \subset \partial R_{U}$ must map to a face $\Delta_d'$ of $\partial R_{U'}$ of the same codimension. For all $0 < d \le n$, let $S_d(R_U)$ be defined as the union of relative interiors of faces of $\partial R_U$ which have codimension $d$, and define $S_0(R_U) = \mathring{R}_U$. So, if $\phi_U(x) \in S_d(R_U)$, then $\phi_{U'}(x) \in S_d(R_{U'})$ for all smooth charts $U' \ni x$. The set of such points in $M$ therefore defines a union of disjoint submanifolds (without boundary) $S_d(M) := \bigcup_{U} \phi_U^{-1}(S_d(R_U)) = \bigcup_U \bigcup_{\Delta_d \subset S_d(R_U)} \phi_U^{-1}(\mathring{\Delta}_d)$, called the $d$-stratum of $M$. The closure of a connected component of $S_d(M)$ is also a polyhedral manifold, and it is called a codimension-$d$ face of $M$, so called because it lies in a codimension-$d$ face in each coordinate chart $U$. Since $\bigcup_{d \ge 1} S_d(R_U)= \partial R_U$ for each coordinate chart $U$, we can also define $\partial M := \bigcup_{d \ge 1} S_d(M)$ and $E_M := \bigcup_{d \ge 2} S_d(M)$. The set $E_M$ is closed and has $(n-1)$-dimensional Minkowski content zero in any coordinate chart, and it will be called the exceptional set.

A version of Stokes' theorem can be produced for polyhedral manifolds, based on the fact that they satisfy all the axioms of Whitney manifolds: if $\omega$ is in $C^1\Omega^{n-1}(\mathring{M})$ and bounded on $M \backslash E_M$ (meaning each coefficient is bounded in any coordinate chart), $\omega|_{\partial M \backslash E_M}$ is summable, and $d\omega|_{\mathring{M}}$ is summable, then \cite[p.~108]{whitney}

$$\int_{\mathring{M}} d\omega = \int_{\partial M \backslash E_M} \omega = \int_{S_1(M)} \omega.$$

What we will call a \emph{Blow-Up} of a compact polyhedral manifold, called $B_M$, is any compact polyhedral manifold of the same dimension as $M$ possessing a continuous onto map of polyhedral manifolds $\Phi: B_M \to M$ such that:
\begin{enumerate}
    \item{$\Phi|_{\mathring{B}_M}: \mathring{B}_M \to \mathring{M}$ is a diffeomorphism.}
    \item{For all $d > 0$, the preimage of the relative interior of a codimension-$d$ face $\Delta_d$ of $M$ is the relative interior of a codimension-$1$ face $\hat\Delta_d$ of $B_M$, and all codimension-$1$ faces of $B_M$ are obtained this way.}
    \item{$\Phi|_{\Phi^{-1}(\mathring{e})}: \Phi^{-1}(\mathring{e}) \to \mathring{e}$ is a diffeomorphism for each codimension-$1$ face $e \subset \partial M$.}
    \item{$\Phi|_{\Phi^{-1}(\mathring{\Delta}_d)}: \Phi^{-1}(\mathring{\Delta}_d) \to \mathring{\Delta}_d$ is a smooth submersion for each codimension-$d$ face $\Delta_d \subset \partial M$, $0 < d \le n$.}
    \item{Two codimension-1 faces $\hat \Delta_d, \hat \Delta_{d'}$ of $B_M$ have non-empty intersection if and only if $\Delta_d \subseteq \Delta_{d'}$ or $\Delta_{d'} \subseteq \Delta_{d}$.}
    
\end{enumerate}
If $M$ is oriented, then we also require that $B_M$ is oriented and $\Phi$ is an orientation-preserving map.

The easiest example of a polyhedral manifold with a blow-up is the standard unit $n$-simplex embedded in $\mathbb{R}^n$. Intuitively, the blow-up is the result of intersecting a new half-hyperplane for each $d$-face, $0 \le d < n$, essentially ``cutting off'' corners, and defining the map $\Phi$ by contracting the newly created faces to the original $d$-faces. The process of cutting off corners is known as omnitruncation, and for a simplex it produces a permutahedron.

Actually writing down an explicit blow-down map between polytopes seems to be quite difficult; we are aware of \cite{matus} where a map from the blown-up simplex (the permutahedron) to the simplex is constructed that has all the needed properties, although only its inverse can be written down explicitly. We believe that a similar procedure could be used to produce blow-ups of convex polytopes which are also convex polytopes.

\section{Appendix: Relationship with Existing Formulas}\label{equivalenceappendix}

The following proposition shows that the distributional curvature defined in (\ref{distcurvendofinal}) is equivalent to the equation for the densitized distributional curvature defined by other authors, with a slightly different test space.

\begin{proposition}\label{densitizeddistcurvequiv}
    Let $f$ be a compatible frame and let $\hat{\phi}$ be a skew-symmetric, compatible $\mathrm{End}(TM)$-valued $(n-2)$-form. Then we can define a $(0,4)$-tensor $A(X,Y,Z,W) := (-1)^n\iprod{\star\phi(X,Y)W,Z}$. Then, using notation from this paper on the left side and notation from \cite{gopalakrishnan2023analysis} on the right side, the following are true at each point of $\mathring{T}$, $\mathring{e}$, and $\mathring{p}$ respectively:
    \begin{align}
    \iprod{\hat{R} \wedge \hat{\phi}} &= \frac{1}{2}\iprod{\mathcal{R},A}\omega_T, \label{equivpart1}\\
    \iprod{\hat\Two_e^T \wedge i_{\mathring{e}}^*\hat\phi|_T} &= -2\iprod{\Two^{\vec\nu^T},A_{F\hat\nu\hat\nu F}}\omega_e^T, \label{equivpart2}\\
    \iprod{\hat\Theta_p^T \wedge i_{\mathring{p}}^*\hat\phi^T} &= 2\Theta_pA_{\hat\mu\hat\nu\hat\nu\hat\mu}\omega_p.\label{equivpart3}
    \end{align}
    In addition, $A_{F\hat\nu\hat\nu F}$ is well-defined on $\mathring{e}$. 
    To be specific, $\omega_e^T$ is the induced volume form from the orientation of $T$ on $e$, and $\omega_p$ is the induced volume form from the orientation of $e$ on $T$ where $e$ is the side such that $E_{p,e}^T$ has an inwards-pointing normal vector.
\end{proposition}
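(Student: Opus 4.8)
The plan is to verify each of (\ref{equivpart1})--(\ref{equivpart3}) by expanding both sides in the relevant adapted orthonormal frame and collapsing the left-hand ``wedge inner product'' to a scalar contraction. The uniform starting point is the observation, immediate from (\ref{endoprod}), that for $\mathrm{End}(TM)$-valued forms the pairing reduces to $\iprod{\hat\alpha\wedge\hat\beta} = \sum_{i,j}\alpha^i_j\wedge\beta^j_i$, where $\alpha^i_j,\beta^i_j$ are the matrix entries in the chosen orthonormal frame (this is just the trace pairing on the endomorphism parts wedged with the form parts). The one extra analytic ingredient is the Hodge-star identity $\mu\wedge\star\nu=\langle\mu,\nu\rangle\,\omega$ for forms of equal degree, applied entrywise; it converts the $(n-2)$-form factor coming from $\hat\phi$ into a $2$-form and produces the ambient or induced volume form appearing on each right-hand side.

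For (\ref{equivpart1}) I would work in $F = f^T$, write $\hat R = \sum_{i,j}(F^j\otimes F_i)\otimes f^*\Omega^i_j$ and $\hat\phi = \sum_{k,l}(F^l\otimes F_k)\otimes\phi^k_l$, and set $\psi^j_i := \star\phi^j_i$, a $2$-form (so $\phi^j_i=\star\psi^j_i$, since $\star\star=1$ on $2$-forms in the Riemannian case). The trace reduction and $\mu\wedge\star\nu=\langle\mu,\nu\rangle\,\omega_T$ give $\iprod{\hat R\wedge\hat\phi} = \sum_{i,j}\langle f^*\Omega^i_j,\psi^j_i\rangle\,\omega_T = \tfrac12\sum_{i,j,a,b}f^*\Omega^i_j(F_a,F_b)\,\psi^j_i(F_a,F_b)\,\omega_T$, where the factor $\tfrac12$ is the normalization of the induced inner product on $2$-forms. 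Unwinding the definition of $A$ shows $\psi^c_d(F_a,F_b) = (-1)^n A_{abcd}$ in this frame, while $f^*\Omega^i_j(F_a,F_b)$ are the frame components of $\mathcal R$; substituting and matching index placement yields exactly $\tfrac12\iprod{\mathcal R, A}\,\omega_T$.

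For (\ref{equivpart2}) and (\ref{equivpart3}) the key structural simplification is that $\hat\Two_e^T$ (by (\ref{TwoEndCoord})) and $\hat\Theta_p$ have nonzero matrix entries only in slots involving the normal directions, and that $\hat\phi$ is skew-symmetric. Expanding the trace sum, the skew-symmetry of \emph{both} factors makes the two complementary normal entries coincide, which produces the overall factor $2$ on both right-hand sides. For (\ref{equivpart2}) I would then use the Hodge star on the $(n-1)$-manifold $e$ to write $i_e^*\phi^k_n$ as $\star_e$ of a $1$-form and pair it against the $1$-form $(\Two_e^T)^n_k=\langle\nabla E_k,\vec n\rangle$; summing over $k$, the $1$-form pairings assemble into the full contraction of the two symmetric $2$-tensors on $e$, recognizing $(\Two_e^T)^n_k$ as $\Two^{\vec\nu^T}$ and the surviving normal-slot components of $\star\hat\phi$ as $A_{F\hat\nu\hat\nu F}$. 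For (\ref{equivpart3}), since $\dim p=n-2$ the form $i_p^*\phi^j_i$ is already top-dimensional, so $i_p^*\phi^j_i=\phi^j_i(\tau_1,\dots,\tau_{n-2})\,\omega_p$; Hodge duality in the normal plane, $\phi^j_i(\tau_1,\dots,\tau_{n-2})=\pm\psi^j_i(\vec\nu,\vec n)$, converts the two surviving entries into $A$ evaluated on the two normals, and antisymmetry of $A$ in its last two arguments combines them into $2\Theta_p A_{\hat\mu\hat\nu\hat\nu\hat\mu}\,\omega_p$.

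I expect the main obstacle to be bookkeeping rather than anything conceptual: reconciling the index and orientation conventions of the present paper with those of \cite{gopalakrishnan2023analysis}, and in particular tracking the sign $(-1)^n$, the Hodge-duality signs relating a pullback to a normal-plane evaluation (which depend on the induced orientations defining $\omega_e^T$ and $\omega_p$), and the raised/lowered index placements in $\iprod{\mathcal R, A}$, $\iprod{\Two^{\vec\nu^T}, A_{F\hat\nu\hat\nu F}}$, and $A_{\hat\mu\hat\nu\hat\nu\hat\mu}$. Confirming that all of these signs and normalizations conspire to produce precisely the stated constants $\tfrac12$, $-2$, and $2$ is where the care will be needed.
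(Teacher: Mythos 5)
Your proposal takes essentially the same approach as the paper's proof, which likewise expands each left-hand side in the adapted orthonormal frames $f^T$, $E_e^T$, and $E_{p,e}^T$, collapses the wedge inner product to the entrywise trace contraction, and uses Hodge-star identities (in the paper's case $f^c \wedge f^d \wedge \star(f^k\wedge f^l)$, $i_e^*\star({E^k}^T\wedge{E^l}^T)$, and $i_p^*\star({\tau^k}^T\wedge{\tau^l}^T)$) together with skew-symmetry of $\hat\phi$ and of the normal-slot factors to produce the constants $\tfrac12$, $-2$, and $2$, including the sign flip from $\vec\nu^T$ being inward-pointing while $E_n^T$ is outward-pointing. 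The sign and orientation bookkeeping you defer is precisely what the paper's computation carries out explicitly; nothing in your plan would fail.
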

\begin{proof}
    The proof strategy for all three statements is to express the left-hand side in an appropriate basis and compute. For convenience, we'll use upper indices to refer to the coframe of a corresponding frame. So for instance, $\{E_e^i\}_{i = 1}^n$ is the coframe on $e$ defined by $E_e^i({E_e}_j) = \delta^i_j$.
    
    To prove (\ref{equivpart1}) we let $\phi^i_{~jkl} := (-1)^n\iprod{\star\hat{\phi}(f_k,f_l)f_j,f_i} = A(f_k,f_l,f_i,f_j)$, so $\hat\phi = \frac{1}{2}\phi^i_{~jkl}f_i \otimes f^j \otimes \star(f^k \wedge f^l)$, and also write $\hat{R} = \frac{1}{2}\hat{R}(f_c,f_d)\otimes (f^c \wedge f^d )$. Note that these are not basis expansions for these forms, for instance the expression for $\hat{\phi}$ includes a $f_i \otimes f^j \otimes \star(f^k \wedge f^l)$ term and a $f_i \otimes f^j \otimes\star(f^l \wedge f^k)$ term for each $i,j,k,l$ (hence the $\frac{1}{2}$ factor). Then we compute:
    \begin{align*}
        \iprod{\hat{R} \wedge \hat{\phi}} &= \frac{1}{4}\iprod{\hat{R}(f_c,f_d),\phi^i_{~jkl}f_i \otimes f^j} f^c \wedge f^d \wedge \star(f^k \wedge f^l)\\
        &=\frac{1}{4}\sum_{k,l}\bigg[\iprod{\hat{R}(f_k,f_l),\phi^i_{~jkl}f_i \otimes f^j} - \iprod{\hat{R}(f_l,f_k),\phi^i_{~jkl}f_i\otimes f^j}\bigg]\omega_T\\
        &= \frac{1}{2}\sum_{j,k,l} \iprod{\hat{R}(f_k,f_l)f_i,f_j}_T\phi^i_{~jkl} \omega_T\\
        &= \frac{1}{2}\sum_{i,j,k,l} \iprod{\hat{R}(f_k,f_l)f_i,f_j}_TA(f_k,f_l,f_i,f_j) \omega_T = \frac{1}{2}\iprod{\mathcal{R},A}\omega_T.
    \end{align*}

    To prove (\ref{equivpart2}), we will use the shorthand $E_i^T = (E_e^T)_i$, where $E_e^T$ is the orthonormal frame adapted to $e$ using notation from this paper. Without the superscript $T$, $E_i$ is implicitly one of the first $n-1$ entries of $E_e^T$ which do not depend on $T$.  Let $\phi^i_{~jkl} := (-1)^n\iprod{\star_T\hat{\phi}|_T(E_k^T,E_l^T)E_j^T,E_i^T}_T = A|_T(E_k^T,E_l^T,E_i^T,E_j^T)$ so again $\hat{\phi} = \frac{1}{2}\phi^i_{~jkl} E_i^T \otimes {E^j}^T \otimes \star_T({E^k}^T \wedge {E^l}^T)$. Then we compute:

    \[\iprod{\hat\Two_e^T \wedge i_{\mathring{e}}^*\hat{\phi}|_T} = \frac{1}{2}\iprod{\hat{\Two}_e^T(E_m),\phi^i_{~jkl} E_i^T \otimes {E^j}^T} E^m \wedge i_{\mathring{e}}^*\star_T({E^k}^T \wedge {E^l}^T).\]

    Here we need to use the fact that 
    \[
    i_{\mathring{e}}^*\star_T({E^k}^T \wedge {E^l}^T) = \begin{cases}0 &\text{if } k,l \ne n,\\
    -\star_e{E^k}^T &\text{if } l = n,k \ne n\\
    \star_e{E^l}^T &\text{if } k = n,l \ne n\end{cases}
    \]
    where $\star_e$ means the Hodge star operator in the codimension-1 polytope $e$ with the orientation induced by the orientation of $T$. This can be verified using the formula for the Hodge star in an orthonormal coframe along with the fact that the volume form induced on $e$ from the orientation on $T$ is $\omega_e^T = i_{\mathring{e}}^*(E_n^T \lrcorner\omega_T)$. Then the previous line simplifies to

    \begin{align*}
        \frac{1}{2}&\iprod{\hat{\Two}_e^T(E_m),\phi^i_{~jkl} E_i^T \otimes {E^j}^T} E^m \wedge i_{\mathring{e}}^*\star_T({E^k}^T \wedge {E^l}^T) \\
        &= -\frac{1}{2}\bigg[\sum_{k}\iprod{\hat\Two_e^T(E_k),\phi^i_{~jkn}E_i^T \otimes {E^j}^T} - \sum_l\iprod{\hat\Two_e^T(E_l),\phi^i_{~jnl}E_i^T \otimes {E^j}^T}\bigg]\omega_e^T\\
        &= -\sum_k\iprod{\hat\Two_e^T(E_k),\phi^i_{~jkn}E_i^T \otimes {E^j}^T}\omega_e^T\\
        &= -\sum_{j,k}\iprod{\hat\Two_e^T(E_k)E_i^T, E_j^T}_T\phi^i_{~jkn}\omega_e^T.
    \end{align*}

    Now we use the definition that 
    \[
    \iprod{\hat\Two_e^T(E_k)E_i^T,E_j^T}_T := \begin{cases}
        0 &\text{if } i,j \ne n,\\
        \iprod{\nabla_{E_k}^TE_i^T,E_j^T}_T &\text{if } i = n~\text{or}~j = n,
    \end{cases}
    \]
    and abandon Einstein notation to avoid confusion about the $n$ indices, so the previous line is equal to
    \begin{align*}
        -\sum_{i,j,k}&\iprod{\hat\Two_e^T(E_k)E_i^T, E_j^T}_T\phi^i_{~jkn}\omega_e^T\\
        &= -\sum_{j,k} \big[\iprod{\nabla_{E_k}^TE_n^T,E_j}_T\phi^n_{~jkn} + \sum_{i,k}\iprod{\nabla_{E_k}^TE_i,E_n^T}_T\phi^i_{~nkn}\big]\omega_e^T\\
        &= -\sum_{j,k}\big[-\iprod{\nabla^T_{E_k}E_n^T,E_j}_T\phi^j_{~nkn} + \iprod{\nabla_{E_k}^TE_j,E_n^T}_T\phi^j_{~nkn}\big]\omega_e^T\\
        &= -2\sum_{j,k}\iprod{\nabla_{E_k}^TE_j,E_n^T}_T\phi^j_{~nkn}\omega_e^T\\
        &= 2\sum_{j,k} \Two^{\vec{\nu}^T}(E_k,E_j)A|_T(E_k,E_n^T,E_j,E_n^T)\omega_e^T = -2\iprod{\Two^{\vec{\nu}^T},A_{F\hat\nu\hat\nu F}}\omega_e^T.
    \end{align*}
    (Recall that in \cite{gopalakrishnan2023analysis}, $\vec\nu^T$ is the inward-pointing normal vector, whereas $E_n^T$ is the outward-pointing normal vector.)

    We still need to show that $A_{F\hat\nu\hat\nu F}$ is well-defined. Assume that $T$ is the side of $e = T \cap T'$ such that $E_e^T$ is positively oriented. Then we have
    \begin{align*}A_{F\hat\nu\hat\nu F}(E_i,E_j)|_T &= (-1)^n\iprod{\star_T\hat\phi|_T(E_i,E_n^T)E_j,E_n^T}_T \\
    &= (-1)^{i-1}\iprod{\hat\phi|_T(E_1,\dots,\hat E_i,\dots E_{n-1})E_j,E_n^T}_T\\
    &= (-1)^{i-1}{[i_{\mathring{e}}^*\hat\phi|_T(E_1,\dots,\hat E_i,\dots,E_{n-1})]_{E_e^T}}^n_j\\
    &= (-1)^{i-1}\mathrm{Ad}\left(\begin{bmatrix}I & 0 \\ 0 & -1\end{bmatrix}\right)\left([i_{\mathring{e}}^*\hat\phi|_{T'}(E_1,\dots,\hat E_i, \dots,E_{n-1})]_{E_e^{T'}}\right)^n_j\\
    &= (-1)^{i}{[i_{\mathring{e}}^*\hat\phi|_{T'}(E_1,\dots,\hat E_i,\dots,E_{n-1})]_{E_e^{T'}}}^n_j\\
    &= (-1)^i\iprod{\hat\phi|_{T'}(E_1,\dots,\hat E_i,\dots,E_{n-1})E_j,E_n^{T'}}_{T'}.
    \end{align*}
    The notation $\hat E_i$ above means the $i$th vector is skipped. Here we need to apply the fact that the frame $E_e^{T'}$ is negatively oriented on $M$, so that \break $\star_{T'}\hat\phi|_{T'}(E_i,E_n^{T'}) = (-1)^{n+i}\hat\phi|_{T'}(E_1,\dots,\hat E_i,\dots,E_{n-1})$. This shows that $A_{F \hat \nu \hat \nu F}|_{T'} = A_{F \hat \nu \hat \nu F}|_T$.

    To prove (\ref{equivpart3}), we will use the shorthand $\tau_i^T = (E_{p,e}^T)_i$, where $E_{p,e}^T$ is the positively oriented orthonormal frame adapted to $p$ using notation from this paper, and $e \supset p$ is the facet for which $\tau_n = -\vec{n}^{T}$. Without the superscript $T$, $\tau_i$ is implicitly one of the first $n-2$ entries of $E_{p,e}^T$ which do not depend on $e$ or $T$. Let $\phi^i_{~jkl} := (-1)^n\iprod{\star\hat{\phi}|_{T,e}(\tau_k^T,\tau_l^T)\tau_j^T,\tau_i^T}_T = A|_{T,e}(\tau_k^T,\tau_l^T,\tau_i^T,\tau_j^T)$, so again $\hat{\phi}|_{T,e} = \frac{1}{2}\phi^i_{~jkl} \tau_i^T \otimes {\tau^j}^T \otimes \star({\tau^k}^T \wedge {\tau^l}^T)$. 
    
    Here, because $\hat\phi|_T$ and $A|_T$ are both discontinuous on $\mathring{p}$, care needs to be taken about which of the ``representatives'' we use to evaluate $\phi^i_{~jkl}$. We used the notation $\hat\phi|_{T,e}$ to refer to the value of $\hat\phi|_T$ that is continuously extended from $\mathring{e}$, i.e. $\hat\phi|_{T,e}(x) = \lim_{m \to \infty}\hat\phi|_T(x_{m})$ where $x_{m}$ is any sequence of points in $\mathring{e}$ that converge to $x \in p$, and likewise for $A$. This choice means that $i_{\mathring{p}}^*\hat\phi|_{T,e} = {\chi_{0,p}^T}^*\hat\phi^T$, although as discussed at the end of subsection \ref{deframesection}, the product we are evaluating ultimately does not depend on this choice.
    
    Then we compute (abandoning Einstein notation):

    \begin{align*}
        \iprod{\hat\Theta_p^T \wedge {\chi_{0,p}^T}^*\hat\phi^T} &= \frac{1}{2}\Theta_p\sum_{i,j,k,l}\iprod{\tau_n^T \otimes {\tau^{n-1}}^T - \tau_{n-1}^T \otimes {\tau^n}^T, \phi^i_{~jkl}\tau_i^T \otimes {\tau^j}^T} i_{\mathring{p}}^*\star({\tau^k}^T \wedge {\tau^l}^T)
    \end{align*}
    Note that, because $\tau_n^T = -\vec{n}^T$,
    \[
    i_{\mathring{p}}^*\star({\tau^k}^T \wedge {\tau^l}^T) = \begin{cases}
        -\omega_p &\text{if } k = n-1,l = n,\\
        \omega_p &\text{if } k = n,l = n-1,\\
        0 & \text{otherwise},
    \end{cases}
    \]
    where $\omega_p = -\tau^1 \wedge\dots\wedge\tau^{n-2}$ is the volume form induced on $p$ by the orientation of $e$. So this simplifies to

    \begin{align*}
        \frac{1}{2}\Theta_p&\sum_{i,j,k,l}\iprod{\tau_n^T \otimes {\tau^{n-1}}^T - \tau_{n-1}^T \otimes {\tau^n}^T, \phi^i_{~jkl}\tau_i^T \otimes {\tau^j}^T} i_{\mathring{p}}^*\star({\tau^k}^T \wedge {\tau^l}^T)\\
        &= \frac{1}{2}\Theta_p\sum_{i,j}\iprod{\tau_n^T \otimes {\tau^{n-1}}^T - \tau_{n-1}^T \otimes {\tau^n}^T,[\phi^i_{~j,n,n-1} - \phi^i_{~j,n-1,n}]\tau_i^T \otimes {\tau^j}^T}\omega_p\\
        &= \Theta_p\sum_{i,j}\iprod{\tau_n \otimes {\tau^{n-1}}^T - \tau_{n-1} \otimes {\tau^n}^T,\phi^i_{~j,n,n-1}\tau_i^T \otimes {\tau^j}^T}\omega_p\\
        &= \Theta_p\sum_{i,j}(\delta_n^j\delta_i^{n-1} - \delta_{n-1}^j\delta_i^n)\phi^i_{~j,n,n-1}\omega_p\\
        &=\Theta_p(\phi^{n-1}_{~n,n,n-1} - \phi^n_{~n-1,n,n-1})\omega_p\\
        &=2\Theta_p\phi^{n-1}_{~n,n,n-1}\omega_p = 2\Theta_pA|_{T,e}(\tau_{n}^T,\tau_{n-1}^T,\tau_{n-1}^T,\tau_{n}^T)\omega_p = 2\Theta_pA_{\hat\mu\hat\nu\hat\nu\hat\mu}\omega_p.
    \end{align*}

\end{proof}

\begin{corollary}
    Let $\hat{\phi} \in \mathcal{A}(f,\mathcal{T},M)$ and let $A$ be defined as in Proposition \ref{densitizeddistcurvequiv}. Then, using notation from this paper on the left side and notation from \cite{gopalakrishnan2023analysis} on the right side,

    \[\iprod{\iprod{\hat{R}_{\mathrm{dist}},\hat{\phi}}} = \frac{1}{2}\widetilde{\mathcal{R}\omega}(A)\]
\end{corollary}

\begin{proof}
    The only term which is not straightforward from Proposition \ref{densitizeddistcurvequiv} and the definition of $\widetilde{\mathcal{R}\omega}$ in \cite[p.~11]{gopalakrishnan2023analysis} is the term involving the jump in second fundamental form across codimension-1 interfaces.

    In $\hat{R}_{\mathrm{dist}}$, the jump term is given by 
    
    \[-\int_{\mathring{e}}\iprod{\hat{\Two}_e^T \wedge i_{\mathring{e}}^*\hat{\phi}^T} - \iprod{\hat\Two_e^{T'} \wedge i_{\mathring{e}}^*\hat\phi^{T'}},\]
    where the orientation of the integral is chosen to be the same as the induced orientation on $e$ from $T$. Per equation (\ref{equivpart2}), this is the same as 

    \[2\int_{\mathring{e}}\iprod{\Two^{\vec\nu^T},A_{F\hat\nu\hat\nu F}}\omega_e^T - \iprod{\Two^{\vec\nu^{T'}},A_{F \hat\nu\hat\nu F}}\omega_e^{T'}.\]

    Next we note that $\omega_e^{T'} = -\omega_e^T$ because the two induced orientations are opposite, so this term simply becomes

    \[2\int_{\mathring{e}}\iprod{\Two^{\vec\nu^T} + \Two^{\vec\nu^{T'}},A_{F\hat\nu\hat\nu F}}\omega_e^T = 2\int_{\mathring{e}}\iprod{[\![\Two]\!],A_{F\hat\nu\hat\nu F}}\omega_e\]

    Note that we can drop the superscript on $\omega_e^T$ because $\omega_e$ is implicitly the volume form induced by whatever orientation the integral is being evaluated with.
\end{proof}

\section*{Acknowledgements}

We would like to thank Yasha Berchenko-Kogan for his valuable insights.  We are especially grateful for his suggestion of using the blow-up to do integration by parts.  We also acknowledge support from NSF grant DMS-2533499.

\printbibliography

\end{document}